\documentclass[a4paper, 11pt]{article}
\usepackage{amsmath,amssymb,esint,amscd,xspace,fancyhdr,color,authblk,srcltx,fontenc,bbm}
\setcounter{MaxMatrixCols}{30} 
\usepackage{hyperref}
\usepackage{hyperref}
\usepackage{cite}

\makeatletter
\newcommand{\leqnomode}{\tagsleft@true}
\newcommand{\reqnomode}{\tagsleft@false}
\makeatother
                     
\setlength{\oddsidemargin}{-0.05in}
\setlength{\evensidemargin}{-0.05in}
\setlength{\textwidth}{15cm}
\textheight=22.15cm
\voffset=-1truecm
\hoffset=+1.1truecm

\newtheorem{theorem}{Theorem}[section]

\newtheorem{definition}[theorem]{Definition}

\newtheorem{lemma}[theorem]{Lemma}

\newtheorem{remark}[theorem]{Remark}

\newenvironment{taggedtheorem}[1]
 {\taggedtheoremx}
 {\endtaggedtheoremx}

\newenvironment{taggedingre}[1]
 {\taggedingredientx}
 {\endtaggedingredientx}

\newenvironment{proof}[1][Proof]{\textbf{#1.} }{\hfill\rule{0.5em}{0.5em}}
{\catcode`\@=11\global\let\AddToReset=\@addtoreset
\AddToReset{equation}{section}

\AddToReset{theorem}{section}

\title{Level-set inequalities on fractional maximal distribution functions and applications to regularity theory}
\author{Thanh-Nhan Nguyen\thanks{Department of Mathematics, Ho Chi Minh City University of Education, Ho Chi Minh City, Vietnam; \texttt{nhannt@hcmue.edu.vn}}, Minh-Phuong Tran\footnote{Corresponding author.}\thanks{Applied Analysis Research Group, Faculty of Mathematics and Statistics, Ton Duc Thang University, Ho Chi Minh City, Vietnam; \texttt{tranminhphuong@tdtu.edu.vn}}}

\date{\today}

\begin{document}
 
\maketitle
\begin{abstract}

The aim of this paper is to establish an abstract theory based on the so-called fractional-maximal distribution functions (FMDs). From the rough ideas introduced in~\cite{AM2007}, we develop and prove some abstract results related to the level-set inequalities and norm-comparisons by using the language of such FMDs. Particularly interesting is the applicability of our approach that has been shown in regularity and Calder\'on-Zygmund type estimates. In this paper, due to our research experience, we will establish global regularity estimates for two types of general quasilinear problems (problems with divergence form and double obstacles), via fractional-maximal operators and FMDs. The range of applications of these abstract results is large. Apart from these two examples of the regularity theory for elliptic equations discussed, it is also promising to indicate further possible applications of our approach for other special topics.

\medskip

\medskip

\medskip

\noindent 

\medskip

\noindent Keywords: Fractional-maximal distribution function; Cutoff fractional maximal operators; Gradient estimates; Level-set inequalities; Regularity theory; Quasi-linear elliptic problem; Quasi-linear elliptic double obstacle problem; Lorentz spaces; Orlicz spaces; Orlicz-Lorentz spaces.

\end{abstract}   
                  
\tableofcontents

\section{Introduction}\label{sec:intro}
\textit{1.1. An example - Regularity theory for a class of quasi-linear elliptic equations}

Before embarking on the main objective of this paper, we take into account a class of nonlinear elliptic equations of the type
\begin{align}\label{eq:example}
\mathrm{div}(\mathbb{A}(x,\nabla u)) = \mathrm{div}(|\mathbf{F}|^{p-2}\mathbf{F}), \quad \text{in} \ \ \Omega
\end{align}
as an example and review some recent progresses that have been made in the last several years. As far as we know, researchers have long been interested in regularity theory for linear/nonlinear differential equations and despite plenty of works concerning this kind equations, the picture of regularity for their solutions is somehow incomplete. Let us briefly give a survey of classical and recent regularity results the related to \eqref{eq:example}, in which the main objective is to transfer the regularity of datum $\mathbf{F}$ to the gradient of solutions $\nabla u$ in the norm of some functional space $\mathbb{X}$. The gradient estimates for solutions to equations \eqref{eq:example} can be written as
\begin{align}\label{eq:norm_estX}
\|\nabla u\|_{\mathbb{X}} \le C \|\mathbf{F}\|_{\mathbb{X}},
\end{align}
or in terms of the local Calder\'on-Zygmund estimates
\begin{align}\label{eq:local_czX}
\|\nabla u\|_{\mathbb{X}(B_{R/2})} \le C \|\mathbf{F}\|_{\mathbb{X}(B_{R})} + \texttt{lower-order terms on} \ \nabla u,
\end{align}
where $B_R$ is a open ball in $\mathbb{R}^n$ of radius $R$ such that $B_{R} \subset \Omega$. These types of estimates, in a sharp way, are important in studying many problems concerning nonlinear equations or systems.

Classical regularity results when considering $p$-Laplace equations with $\mathbf{F} \equiv 0$ have been obtained and proved by several authors by years.  In a fundamental work by Ural\'tzeva in~\cite{Ural1968}, it is possible to obtain the $C^{1,\alpha}$ H\"older regularity for $p \ge 2$; and this result was later established for systems by Uhlenbeck~\cite{Uhlenbeck1977}. Afterwards, there has been extensive research on the $p$-harmonic functions since the 1980s such as~\cite{Evans1982, DiBenedetto1983, Lewis1983, Tolksdorf1984} and many recent advances. The very first and well-known regularity result related to the $p$-Laplace equation $\mathrm{div}(|\nabla u|^{p-2}\nabla u) = \mathrm{div}(|\mathbf{F}|^{p-2}\mathbf{F})$, where the estimate~\eqref{eq:norm_estX} established by T. Iwaniec in~\cite{Iwaniec83} when $\mathbb{X}=L^q$, for all $p \le q < \infty$. The method of Iwaniec relies on the use of sharp maximal operators in Harmonic Analysis and priori estimates for solution to homogeneous equations $\mathrm{div}(\mathbb{A}(x_0,\nabla u))=0$. Iwaniec's results were later improved and extended by DiBenedetto and Manfredi in~\cite{DiBenedetto1993} for the systematic elliptic equations when $\mathbb{X}=\text{BMO}$ for $p \ge 2$. During the past several years, there have been extensive studies and promising technical approaches on Calder\'on-Zygmund and regularity for quasi-linear elliptic equations~\eqref{eq:example}. So far many progresses have been made and the literature on regularity theory has been further expanded. In 1993, Caffarelli and Peral in~\cite{CP1998} proposed a very important approach to $L^p$ estimates for quasi-linear elliptic equation $\mathrm{div}(\mathbb{A}(x,\nabla u))=0$. This approach relies on Calder\'on-Zygmund decomposition and the boundedness of Hardy-Littlewood maximal functions. It is valuable to further develop a roadmap for Calder\'on-Zygmund gradient estimates and regularity theory for more general class of nonlinear elliptic/parabolic equations over time. 

In the past couple of years, inspired by this beautiful idea, there have been several attempts to study regularity theory for solutions to elliptic equations in form $\mathrm{div}(\mathbb{A}(x,\nabla u))=\mathrm{div}(|\mathbf{F}|^{p-2}\mathbf{F})$, in which both  local estimates~\eqref{eq:norm_estX} and~\eqref{eq:local_czX} obtained in $\mathbb{X}=L^p$ and $W^{1,p}$ spaces, such as~\cite{Mi3, BCDKS, BW1}. Regarding extendable regularity estimates for solutions to boundary value problems, it enables us to refer~\cite{Wang2, SSB3, BW2, SSB1}- the works by Byun and Wang;~\cite{ByunKim2016, Breit2018, SSB4, BYZ2008, MP12} - for further studies by others, under various weak assumptions on the boundary of the domain. 

Besides, a plenty of interesting approaches yield regularity results. It refers to seminal works by Iwaniec and Sbordone in~\cite{IS1994} with a method of using Hodge decomposition theorem, Lewis in~\cite{Lewis93} with method based on the truncation of certain maximal operators. Or technique in~\cite{CFL1993, KZ}, is particularly useful to study regularity estimates for equations with VMO-coefficients. It is worth mentioning that classical ingredients mainly based on Calder\'on-Zygmund theory, Harmonic Analysis, interpolation inequalities, methods of freezing the coefficients, VMO coefficients via commutator theorem, etc. 

It is worthy to emphasize that Acerbi and Mingione, in an impressive paper~\cite{AM2007}, generated new idea to develop a local Calder\'on-Zygmund theory for degenerate parabolic systems. Therein the authors first proposed a technique with no use of maximal operators, Harmonic analysis free, the basic analysis is an application of DiBenedetto's intrinsic parabolic geometry estimates~\cite{DiBenedettobook} (that is a scaling depending on solution itself) and Vitali's covering lemma. Although only dealing with the quasi-linear parabolic systems, the results for elliptic ones are also well understood (intrinsic cylinders are replaced by balls in the elliptic problems). This new approach opened the majority of intensive works for nonlinear elliptic and parabolic problems, which are still being mined for new and interesting results. There have been a large number of studies conducted, such as the terminology `large-M-inequality' principle; geometrical approach by Byun and Wang in~\cite{SSB4, BW2} adapted to non-smooth domains; types of `good-$\lambda$ bounds' technique by many others~\cite{55QH4, MPT2018, PNJDE, PNCCM, PNnonuniform, PNmix} working with balls instead of cubes. Several regularity results have been extensively treated in more general functional spaces $\mathbb{X}$: $L^p\text{log}L$, Lorentz, Morrey, Lorentz-Morrey spaces, or even Orlicz spaces, studied and addressed in a series of papers~\cite{Baroni2013, Tuoc2018, Phuc2, Phuc2015, Byun2017JDE, FT2018, MPTNsub, Chle2018} with related works.

\textit{1.2. Level-set decay estimates.} Let us briefly describe the idea underlying this effective approach. We refer to the pioneering works in~\cite{AM2007, Mi3, KM2014} for further reading. In order to obtain a local Calder\'on-Zygmund estimates \eqref{eq:local_czX}, in $\mathbb{X}=L^q$ for example, starting with the integral of $\nabla u$, understood in the sense of the Choquet integral as follows:
\begin{align*}
\int{|\nabla u|^q} = q\int_0^\infty{\lambda^{q-1}\left|\{|\nabla u|>\lambda\} \right|d\lambda}, 
\end{align*}
and with change of variable yields
\begin{align*}
\int{|\nabla u|^q} = M^q q \int_0^\infty{\lambda^{q-1}\left|\{|\nabla u|>M\lambda\} \right|d\lambda},
\end{align*}
for every $\lambda>0$ suitably large, where $M>0$ is an arbitrary constant (see for e.g.\cite{Adam1988}). The key point is that, as in \eqref{eq:local_czX}, we want to find a decay estimate for the level-sets $\left|\{|\nabla u|>M\lambda\} \right|$ in terms of level-sets of the datum $\left|\{|\mathbf{F}|>\lambda\} \right|$. As an abstract idea, it states: if the following estimate
\begin{align}\label{eq:levelset}
\left|\{|\nabla u|>M\lambda\} \right| \le M^{-(p+\delta)} \left|\{|\nabla u|>\lambda\} \right| + C_M\left|\{|\mathbf{F}|>\lambda\} \right|,
\end{align} 
holds for some $\delta>0$, then the gradient of solutions $|\nabla u|$ is controlled by the level-sets of data $\mathbf{F}$. More precisely, for a large $M \gg 1$, the $L^q$ regularity estimate of \eqref{eq:example} will be obtained for all $q<p+\delta$. Otherwise speaking, locally we have
\begin{align*}
\int{|\nabla u|^q} \le M^{q-(p+\delta)} \int{|\nabla u|^q}  + C_M\int{|\mathbf{F}|^q}.
\end{align*}
Here, $C_M$ is a positive constant depends only on $M$, and for simplicity, we denote the Lebesgue measure of a set $E \subset \mathbb{R}^n$ by $|E|$ or by $\mathcal{L}^n(E)$ later in our main work. As the reader will see, the proof of level-set inequality \eqref{eq:levelset} is a key step to conclude local Calder\'on-Zygmund type estimates \eqref{eq:local_czX}. Further, it can be seen that the idea expressed here are also valuable to obtain the level sets involving Hardy-Littlewood maximal or fractional maximal operator of $\nabla u$ in terms of the level sets of $\mathbf{F}$, see, for e.g.~\cite{AM2007, Mi3, KM2014} or~\cite{55QH4,MPT2018,PNCRM}, or more literature related to the subject.

\textit{1.3. Motivation and main proposals.}  To the best of the authors’ knowledge, in general, from the example of level-set decay estimate~\eqref{eq:levelset}, it enables us to state: Given two measurable functions $\mathcal{F, G}$, if there holds
\begin{align}\label{eq:levelset_gen}
\left|\{\mathbf{M}_{\alpha}\mathcal{G}> \sigma_{\varepsilon}\lambda\}\right| \le \varepsilon\left|\{\mathbf{M}_{\alpha}\mathcal{G}>\lambda\} \right| + C\left|\{\mathbf{M}_{\alpha}\mathcal{F}>\kappa_{\varepsilon}\lambda\} \right|,
\end{align}
for any $\varepsilon>0$ small enough and $\sigma_{\varepsilon}$, $\kappa_{\varepsilon}>0$, then the gradient estimate~\eqref{eq:norm_estX} can be obtained in terms of $\mathbf{M}_{\alpha}$ as
\begin{align*}
\|\mathbf{M}_{\alpha}\mathcal{G}\|_\mathbb{X} \le C\|\mathbf{M}_{\alpha}\mathcal{F}\|_\mathbb{X}.
\end{align*}

A question that arises pretty naturally here concerning some sufficient conditions for the level-set inequality~\eqref{eq:levelset} or likewise to be valid. What are the main tools behind the proof of~\eqref{eq:levelset_gen} or how it turns out the idea to construct conditions for $\mathcal{F}$ and $\mathcal{G}$ to sharply achieve~\eqref{eq:levelset_gen}-type inequality? The primary goal of this paper is to answer these questions. With this as motivation, in this study we discuss on some key ingredients for the proof of such type of level-set estimates. 

Inspired by the ideas coming from aforementioned example, if one can find two additive functions $\varphi$ and $\psi$ such that: $\varphi$ belongs to a so-called reverse H{\"o}lder class; and function $\psi$ is able to be controlled by $\varepsilon\mathcal{G}$, for all $\varepsilon>0$, via a local integral estimate (see~\eqref{eq:ReH} and ingredient (A2) in Section~\ref{sec:ingredient} below). To better understand these key ingredients, let us turn back to the abstract theory of nonlinear elliptic equations~\eqref{eq:example}. Here, a version of Gehring's lemma is applied to improve the degree of gradient integrability of weak solutions $v$ to homogeneous equations of type
\begin{align}\label{eq:refeq}
\mathrm{div}(\mathbb{A}(x,\nabla v)) =0, \  \text{in} \ \mathcal{B} \quad \text{and} \quad v=u, \ \text{on} \ \partial\mathcal{B},
\end{align}
whenever $\mathcal{B}$ is an open ball in $\Omega$, see~\cite{Gehring} and later many different versions have been established (see, for e.g., \cite[Theorem 6.7]{Giu},\cite{Iwaniec1995}). As a result of Gehring's lemma, the self-improving property of a well-known inequality, called reverse H\"older integral inequality with increasing supports: if $v$ is the unique solution to reference problem~\eqref{eq:refeq}, then there exists a number $\gamma>1$ depending on $n$, $p$ and the structure of operator $\mathbb{A}$ such that the following inequality holds
\begin{align*}
\left(\fint_{B_\rho}|\nabla v|^{\gamma p} dx\right)^{\frac{1}{\gamma p}}\leq C\left(\fint_{B_{2\rho}}|\nabla v|^p dx\right)^{\frac{1}{p}},
\end{align*}
for all $B_{2\rho} \subset \mathcal{B}$. As we shall see, the function $\varphi$ here plays a role of $\nabla v$. On the other hand, function $\psi$ is in fact the difference between gradients of solutions to equations~\eqref{eq:example} and~\eqref{eq:refeq}, also known as the comparison estimates, must be established in most of research papers. In the context of regularity estimates above-described, these technical ingredients are helpful to recover integrability information of solutions from data, as in~\eqref{eq:levelset}. 

\textit{1.4. Highlights and Significance.}  In accordance with the questions arising before, the discussion leads us to another interesting tool for abstract results. From another point of view, level-set inequality \eqref{eq:levelset_gen} might actually work on the idea of \emph{fractional-maximal distribution functions} (FMD), more precisely as
\begin{align}
\label{eq:levelset_dist}
d_\mathcal{G}^{\alpha}(\mathcal{B},\sigma_{\varepsilon}\lambda) \le \varepsilon d_\mathcal{G}^{\alpha}(\mathcal{B},\lambda) + Cd_\mathcal{F}^{\alpha}(\mathcal{B},\kappa_{\varepsilon}\lambda).
\end{align}
The construction of such appropriate tool can provide new insights of the technical approach when introducing or discussing on regularity theory and its applications. For the sake of clarity and completeness, this level-set type \eqref{eq:levelset_dist} will be explained in Section~\ref{sec:levelset}.

The aim of this paper is two-fold. First we discuss the basic ingredients to formulate the level-set estimates in terms of fractional maximal functions. Specifically, by using the language of such \emph{fractional-maximal distribution functions}, we provide a newer landmark for the `good-$\lambda$' type bounds technique, that has important theoretical implications in regularity and Calder\'on-Zygmund type estimates. On a different direction, researching the regularity theory of nonlinear elliptic equations, that is also linked to the double obstacle problems become a new trend in nonlinear PDEs. Secondly, as an application of the abstract setting for this technique, we shall prove some global regularity estimates for nonlinear elliptic problems. In particular, there are two separate issues discussed here. On the one hand, we develop the level-set decay estimate~\eqref{eq:levelset} (in terms of fractional maximal operators $\mathbf{M}_{\alpha}$) to establish the global regularity estimates for a wide class of nonhomogeneous quasi-linear elliptic equations as follows
\begin{align}\tag{$\mathbf{P}$}
\label{eq:app_intro}
\mathrm{div}(\mathbb{A}(x,\nabla u))  = \ \mathrm{div}(\mathbb{B}(x,\mathbf{F}))\ \text{in} \ \Omega, \quad u =\mathsf{g}\ \text{on}\  \partial \Omega,
\end{align}
where $\mathbf{F} \in L^p(\Omega;\mathbb{R}^n)$ with boundary data $\mathsf{g} \in W^{1,p}(\Omega)$ for $p \in (1,n]$. This form of equations appears naturally in many engineering or science problems. Here, we focus our attention on the appearance of the degeneracy parameter $\varsigma \in [0,1]$ in the standard assumptions of $\mathbb{A}$ (growth and ellipticity conditions, see Section~\ref{sec:app}). Apart from the regularity results described in example above, there have been  remarkable contributions pertaining to regularity theory for degenerate problems with $\varsigma=0$, see~\cite{Phuc2015,Baroni2013,SSB3,55QH4,Mi3, KM2014, Duzamin2,55DuzaMing,PNmix,MPT2018} and many extensive literature so far. In this journey, we confine ourselves with regularity estimates for $\varsigma \ge 0$. On the other hand, as the second application, we want to apply the proposed technique to \emph{nonlinear elliptic double obstacle problems}, where the solutions are constrained to lie between two fixed obstacle functions: $f_1 \le u \le f_2$ (see Section~\ref{sec:app} below, for detailed description). This constrained variational problem is an interesting topic that has a wide range of applications in elasto-plasticity, mathematical finance, optimal control problem, groundwater hydrology, the study of a soap film, equilibrium of an elastic membrane, transactions costs and other sciences (see reference books in~\cite{Friedman,Troianiello,KS1980, Rodfrigues1987} for further mathematical problems and applications).  Significant progress has been made for one-sided obstacle problems in~\cite{MZ1986,Choe1991, Eleuteri2007, EH2011, BDM2011, BS2012, BCW2012, EH2008,Caffarelli_obs,BPR2013} and many references given therein. However, there seems not too much works on the double obstacle case, even though it also arises in many applications. In this paper, along with the works~\cite{MMV1989, KZ1991,Lieberman1991,Choe2016,RT2011}, somewhat extends the results in~\cite{BR2020}, we prove the global gradient estimates of solutions to double obstacle problems by our technical argument, via the theory of FMD. 

One of the new aspects of our work is that we deal with fractional maximal operators. Together with Hardy-Littlewood maximal operators, this is one of important variants in analysis and PDEs to study differentiability properties of functions. Fractional maximal operator, usually denoted by $\mathbf{M}_{\alpha}$, whose definition will be essentially given in the next section, is useful tool to acquire gradient estimates for solutions for a large class of quasi-linear elliptic/parabolic equations (see~\cite{Duzamin2, 55DuzaMing,KM2012, KM2014} and many many research papers so far). In this study, we employ $\mathbf{M}_{\alpha}$ to take advantage of the efficiency of the proposed technique. To be more precise, gradient estimates of solutions to the general problems~\eqref{eq:app_intro} are preserved under fractional maximal operators.

Why fractional maximal operators come into play? - In~\cite{Adam1975}, $\mathbf{M}_{\alpha}$ has a connection to the Riesz potential $\mathbf{I}_\alpha$ (fractional integral operator) in the following point-wise inequality:
\begin{align}\label{eq:MI}
\mathbf{M}_{\alpha} f(x) \lesssim \mathbf{I}_{\alpha} f(x), \quad \text{for every} \ x \in \mathbb{R}^n,
\end{align}
and additionally, the fractional maximal function $\mathbf{M}_\alpha f$ and Riesz potential $\mathbf{I}_\alpha f$ are often comparable in norm,~\cite{MW}. It is observed that fractional maximal operator and the Riesz potential are connected via relation~\eqref{eq:MI}, allowing both size and oscillations of solutions and their derivatives, including `fractional derivatives' $\partial^{\alpha} u$ to be controlled (see~\cite{KM2014}). Henceforth, it enables us to exploit the $\mathbf{M}_{\alpha}$ to transfer the level-set information from given data $\mathbf{F}$ to $\nabla u$. 

One more to emphasize in this study, global regularity results in Section~\ref{sec:app} will be obtained in the setting of Lorentz and Orlicz spaces, respectively. Moreover, an extra attempt has also been made to study results in the Orlicz-Lorentz setting (a generalization of Orlicz and Lorentz spaces - see~\cite{MontOL, Kaminska}) in Section~\ref{sec:app} below. As a natural way analogous to this work, we plan to combine these results and proposed technique in order to study the boundary value problems for nonlinear parabolic equations/systems, in a forthcoming paper.

Going far beyond, this work has significance for its genre. By rephrasing idea from the references already given (including the relevant contributions and ours), in view of the generalizations to `good-$\lambda$' level-set inequalities, this paper develops a general and robust approach to build on the higher regularity via the use of FMD. Apart from being an interesting in its own, this work reveals a wider perspective of such technique in modern analysis. This paper gives a flavor to the reader of the essence behind the proof of Calder\'on-Zygmund-type estimates, which attracts a number of interesting works during last decades. We call the attention of the reader for enlightening paper~\cite{AM2007} and further papers related to this approach. Being a contribution to the study of regularity theory for nonlinear elliptic/parabolic problems, we believe that this paper can provide an inviting reading on the topic, especially to newcomers. 

\textit{1.5. Main results and Outline of the paper.}  
Let us now state our main results which will be summarized into two following theorems. In theorem \ref{theo-A}, we discuss some sufficient conditions for the validity of FMD inequalities. In general, these conditions can be represented by the key ingredients in our statements (see Section \ref{sec:ingredient} for details). Next, arising from what obtained in Theorem \ref{theo-A}, Theorem \ref{theo-B} enables us to obtain the norm-comparisons in the setting of several spaces, such as: Lorentz spaces, Orlicz spaces and Orlicz-Lorentz spaces. However, it is a remarkable fact that the proofs of Theorems \ref{theo-A} and \ref{theo-B} above are splitted into separate parts, to be convenient to the readers. These new abstract results in this paper allow the application of any type of regularity theory (Calder\'on-Zygmund estimates) for partial differential equations. As already said, we here only deal with two applications: for a general nonhomogeneous quasilinear elliptic equations and for quasilinear elliptic double obstacle problems.
\begin{taggedtheorem}{A}\label{theo-A}
Let $\gamma>1$ and two functions $\mathcal{F}$, $\mathcal{G} \in L^1(\Omega; \mathbb{R}^+)$ satisfy the global comparison in~\ref{ing:A3}.
\begin{itemize}
\item[i)] If $\mathcal{F}$, $\mathcal{G}$ satisfy the local comparison~\ref{ing:A2_1} then for every $\alpha \in [0,\frac{n}{\gamma})$ there exists $\varepsilon_0 \in (0,1)$ such that the following fractional-maximal distribution inequality 
\begin{align}\label{ineq:dG-0}
d_{\mathcal{G}}^{\alpha}(\Omega; \sigma \lambda)   \le C \varepsilon d_{\mathcal{G}}^{\alpha}(\Omega; \lambda)  + d_{\mathcal{F}}^{\alpha}(\Omega; \kappa \lambda),
\end{align}
holds for all $\lambda>0$ and $\varepsilon \in (0,\varepsilon_0)$,  with $\sigma = \varepsilon^{-\frac{n-\alpha \gamma}{n\gamma}}$ and $\kappa = \varepsilon c_{\varepsilon}^{-1}$.
\item[ii)] If $\mathcal{F}$, $\mathcal{G}$ satisfy the local comparison~\ref{ing:A2_2} then there exists $\sigma_0 = \sigma_0(n,\tilde{c})>0$ such that the fractional-maximal distribution inequality~\eqref{ineq:dG-0} holds for all $\lambda>0$ and $\varepsilon \in (0,1)$ and for some $\kappa  \in (0,\varepsilon)$. 
\end{itemize}
\end{taggedtheorem}

\begin{taggedtheorem}{B}\label{theo-B}
Let $\gamma>1$ and two functions $\mathcal{F}$, $\mathcal{G} \in L^1(\Omega; \mathbb{R}^+)$ satisfy the global one~\ref{ing:A3}.
\begin{itemize}
\item[i)] If $\mathcal{F}$, $\mathcal{G}$ satisfy the local comparison~\ref{ing:A2_1} then for every $\alpha \in [0,\frac{n}{\gamma})$, $0< q < \frac{n\gamma}{n-\alpha\gamma}$ and $0<s \le \infty$ there exists a constant $C>0$ such that
\begin{align}\label{eq:main-B-0}
\|\mathbf{M}_{\alpha}\mathcal{G}\|_{L^{q,s}(\Omega)} & \le  C  \|\mathbf{M}_{\alpha}\mathcal{F}\|_{L^{q,s}(\Omega)}.
\end{align}
Moreover, given Young function $\Phi \in \Delta_2$ then there exists $\tilde{q}>0$ such that the following estimate
\begin{align}\label{eq:L-O-0}
\|\mathbf{M}_{\alpha}\mathcal{G}\|_{L^{\Phi}(q,s)(\Omega)} \le C \|\mathbf{M}_{\alpha}\mathcal{F}\|_{L^{\Phi}(q,s)(\Omega)},
\end{align}
holds for every $\alpha \in [0,\frac{n}{\gamma})$,  $0 < q < \tilde{q}$ and $0<s \le \infty$.
\item[ii)] If $\mathcal{F}$, $\mathcal{G}$ satisfy the local comparison~\ref{ing:A2_2} then both inequalities~\eqref{eq:main-B-0} and~\eqref{eq:L-O-0} even hold for all $\alpha \in [0,n)$, $0<q<\infty$ and $0 < s \le \infty$. 
\end{itemize}
\end{taggedtheorem}

We conclude the introductory section by outlining the content of this paper. In Section~\ref{sec:ingredient}, we introduce some general notation and basic definitions that will be used throughout the paper. Furthermore, this section is also dedicated to discuss on some crucial ingredients emerged in our approach. Section~\ref{sec:levelset} will establish level-set inequalities by specifying via FMDs; then the definitions of considered functional spaces can be reformulated in terms of such distribution functions.  We also state our chief result in Section~\ref{sec:levelset}. The next section~\ref{sec:abstract} brings these FMD inequalities back to the norm inequalities. We also state and prove some abstract results related to comparisons for different functional spaces (Lorentz, Orlicz and the Orlicz-Lorentz spaces). At the end, we will present two applications where our results take place. For a wider understanding, Section~\ref{sec:app} will detail the global gradient estimates for a general class of quasi-linear elliptic equations via fractional maximal operators based on the idea of FMD established in Section~\ref{sec:levelset} and~\ref{sec:abstract}; and further the global regularity results are also driven with elliptic double obstacle problems, thus providing the complete picture for its applications.

\section{Main ingredients}\label{sec:ingredient}
\subsection{Notation and definitions}\label{sec:notation_def}
In this section, to be convenient for the readers, we first go over some notation and preliminary definitions that will be frequently used in the rest of the paper.

\begin{itemize}
\item Throughout the paper, we employ the letter $C$ to denote the universal positive constant that might be different from line to line.  Furthermore, all constants starting by $C$, such as $C, C_i$ for example, are assumed to be larger than or equal to one and the dependencies on prescribed parameters will be emphasized between parentheses.
\item The domain $\Omega$ is assumed to be an open bounded subset of $\mathbb{R}^n$, for $n \ge 2$. 
\item As apparent from introductory section, we use the denotation $\mathcal{L}^n(E)$ or some time $|E|$ with simplicity, for the Lebesgue measure of a set $E$ in $\mathbb{R}^n$. 
\item In what follows, for a measurable map $h \in L^1_{\mathrm{loc}}(\mathbb{R}^n)$ over subset $E$ of $\mathbb{R}^n$, we shall denote 
\begin{align*}
\fint_E{h(x)dx} = \frac{1}{\mathcal{L}^n(E)}\int_E{h(x)dx},
\end{align*}
as its mean value integral.
\item The open ball in $\mathbb{R}^n$ with center $x$ and radius $\rho>0$ is the set $\{y \in \mathbb{R}^n: |y-x|<\rho\}$, will be abbreviated by $B_\rho(x)$, as usual, for every $x \in \Omega$. In the context, when the center $x$ lies on $\partial\Omega$, we also denote $\Omega_{\rho}(x) := B_{\rho}(x) \cap \Omega$, described as the ``\textit{surface ball}'' in $\mathbb{R}^n$. 
\item For the sake of convenience, by an abuse of notation, as in level-set example \eqref{eq:levelset} above-mentioned and in what follows, the set $\{x \in \Omega: |g(x)| > \Lambda\}$ is also written as $\{|g|>\Lambda\}$.
\end{itemize}

Let us now pass to the definitions of Hardy-Littlewood maximal, fractional maximal functions and the Riesz potential in the spirit of~\cite{K1997, KS2003}. To our knowledge, these significant operators are the most useful tools providing the understanding in Harmonic Analysis, partial differential equations and nonlinear potential theory. And as we shall see, these operators also play a crucial role in our discussion here.

\begin{definition}[Fractional maximal function]\label{def:Malpha}
Let $0 \le {\alpha} \le n$ and $f \in L^1_{\mathrm{loc}}(\mathbb{R}^n)$. Then, the fractional  maximal function $\mathbf{M}_{\alpha} f$ of $f$ is defined by
\begin{align}\label{eq:Malpha}
\mathbf{M}_{\alpha} f(x) = \sup_{\varrho>0}{\varrho^{\alpha} \fint_{B_\varrho(x)}{|f(y)|dy}}, \quad x \in \mathbb{R}^n.
\end{align}
\end{definition}
It is clear to see that when ${\alpha}=0$, $\mathbf{M}_0 \equiv \mathbf{M}$ is the classical Hardy-Littlewood maximal function, and we drop the subscript ${\alpha}$ in this case.

\begin{definition}[Cutoff fractional maximal functions]\label{def:cut-off}
Let $0\le \alpha \le n$ and $f \in L^1_{\mathrm{loc}}(\mathbb{R}^n)$. We define two cutoff fractional maximal functions of $f$ corresponding to $\mathbf{M}_{\alpha}f$ in~\eqref{eq:Malpha} at level $r>0$ as
\begin{align}\label{eq:frac-Malpha}
{\mathbf{M}}^{r}_{\alpha}f(x)  &= \sup_{0<\varrho<r} \varrho^{\alpha} \fint_{B_\rho(x)}f(y)dy; \ \ {\mathbf{T}}^{r}_{\alpha}f(x) = \sup_{\varrho \ge r} \varrho^{\alpha}\fint_{B_\varrho(x)}f(y)dy.
\end{align}
\end{definition}

\begin{definition}[Riesz potential]\label{def:Riesz}
Given $n \ge 2$ and ${\alpha} \in (0,n)$, the fractional integral operator or Riesz potential $\mathbf{I}_{\alpha} f$ of a measurable function $f\in L^1_{\mathrm{loc}}(\mathbb{R}^n;\mathbb{R}^+)$ is defined as the convolution
\begin{align*} 
\mathbf{I}_{\alpha} (f)(x) \equiv (\mathbf{I}_{\alpha} * f)(x) = \int_{\mathbb{R}^n}{\frac{f(y)}{|x-y|^{n-{\alpha}}}dy}, \quad x \in \mathbb{R}^n.
\end{align*}
\end{definition}
From definitions of $\mathbf{M}_{\alpha}$ and $\mathbf{I}_{\alpha}$, as already shown in \eqref{eq:MI}, particularly there holds
\begin{align*}
2^{{\alpha}-n}\mathcal{L}^n(B_1(0))^{\frac{n-{\alpha}}{n}}\mathbf{M}_{\alpha}(f)(x) \le \mathbf{I}_{\alpha}(f)(x), \quad \text{for every} \ \ x \in \mathbb{R}^n,
\end{align*}
for any non-negative measurable function $f$ on $\mathbb{R}^n$, see~\cite{Mi3}. The reader is referred to the textbooks by Stein~\cite{Stein} or Grafakos~\cite{55Gra} for the basic properties of these operators.

In the sequel, let us take some definitions regarding the technical heart of this paper. 
\begin{definition}[Quasi-triangle class]
\label{asmp:QT}
Let $B \subset \Omega$, we denote by $\mathrm{Q}(B)$ a quasi-triangle class of all triplets of measurable functions $(\mathcal{G}, \varphi,\psi)$ defined in $B$ if there exists a constant $\tilde{c} \ge 1$ such that
\begin{align}\label{cond:phi-psi}
\mathcal{G} \le \tilde{c} (\varphi + \psi), \quad \varphi \le \tilde{c} (\mathcal{G} + \psi), \quad \psi \le \tilde{c}(\mathcal{G} + \varphi),  \qquad \mbox{ in } \ B.
\end{align}
\end{definition}
\begin{remark}
As an example, let us consider two measurable functions $u, v$ defined in $\Omega$. It is very easy to check that the triplet $\left(|u|^p, |v|^p, |u-v|^p\right)$ satisfies the quasi-triangle inequality~\eqref{cond:phi-psi} in $\Omega$ with $\tilde{c} = 2^{p-1}$.  
\end{remark}
\begin{definition}[Reverse H\"older class]
\label{def:ReH}
Let $\gamma >1$ and $\varphi \in L^1(\Omega_{2r}(\nu))$ for $r>0$ and $\nu \in \mathbb{R}^n$. We say that the function $\varphi$ belongs to the reverse H{\"o}lder class $\mathcal{RH}^{\gamma}(\Omega_{r}(\nu))$ if there exists a constant $C = C(n,\gamma)>0$ such that
\begin{align}\label{eq:ReH}
\left(\fint_{\Omega_{r}(\nu)}{\left[ \varphi (x)\right]^\gamma dx} \right)^\frac{1}{\gamma} \le C\fint_{\Omega_{2r}(\nu)} \varphi (x) dx.
\end{align} 
\end{definition}

\subsection{Key ingredients}
\label{sec:key}
\begin{taggedingre}{$\mathbf{(A1)}$}
\label{ing:A1}
For given $r>0$ and $\nu \in \mathbb{R}^n$, $\varphi \in \mathcal{RH}^{\gamma}(\Omega_{r}(\nu))$.\end{taggedingre}
\begin{taggedingre}{$\mathbf{(A2_1)}$}[Local comparison]
\label{ing:A2_1}
Let us fix $r_0>0$, we say that $\mathcal{F}$, $\mathcal{G}$ satisfy local comparison $(A2)_1$ if: for every $\nu \in \overline{\Omega}$ and $r \in (0, r_0/2]$, one can find two measurable functions $\varphi$, $\psi$ defined in $\Omega_{2r}(\nu)$ such that $(\mathcal{G}, \varphi,\psi) \in \mathrm{Q}(B_{2r}(\nu))$ with constant $\tilde{c}>0$, $\varphi \in \mathcal{RH}^{\gamma}(\Omega_{r}(\nu))$ and the following estimate
\begin{align}\label{eq:LC}
\fint_{B_{r}(\nu)}{\psi(x) dx} & \le \varepsilon   \fint_{B_{2r}(\nu)}{\mathcal{G}(x) dx}  + c_{\varepsilon}   \fint_{B_{2r}(\nu)}{\mathcal{F}(x)dx},
\end{align}
holds for all $\varepsilon \in (0,1)$.
\end{taggedingre}
\begin{taggedingre}{$\mathbf{(A2_2)}$}[Local comparison]
\label{ing:A2_2}
Let us fix $r_0>0$, we say that $\mathcal{F}$, $\mathcal{G}$ satisfy local comparison $(A2)_2$ if: for every $\nu \in \overline{\Omega}$ and $r \in (0, r_0/2]$, one can find two measurable functions $\varphi$, $\psi$ defined in $\Omega_{r}(\nu)$  such that $(\mathcal{G}, \varphi,\psi) \in \mathrm{Q}(B_{r}(\nu))$  with constant $\tilde{c}>0$, inequality~\eqref{eq:LC} holds for all $\epsilon \in (0,1)$ and 
\begin{align}
\label{eq:Linf}
\|\varphi\|_{L^{\infty}(B_{r}(\nu))} \le C \fint_{B_{2r}(\nu)} \left(\mathcal{G}(x) + \mathcal{F}(x)\right)dx.
\end{align}
\end{taggedingre}
\begin{remark}
\label{rem:ingreA2}
It is important to underline here that ingredient~\ref{ing:A2_1} and~\ref{ing:A2_2} are independently utilized, allowing to derive two separate results in this paper.
\end{remark}
\begin{taggedingre}{$\mathbf{(A3)}$}[Global comparison]
\label{ing:A3}
We say that $\mathcal{F}$, $\mathcal{G}$ satisfy ingredient (A3) if there exists a positive constant $C$ such that
\begin{align}\label{eq:GC}
\fint_{\Omega}{\mathcal{F}(x) dx} \le C \fint_\Omega{\mathcal{G}(x)dx}.
\end{align}
\end{taggedingre}
\begin{taggedingre}{$\mathbf{(A4)}$}[Covering lemma]
\label{ing:A4}
The substitution of Calder\'on-Zygmund-Krylov-Safonov decomposition leading to the following important key lemma, that is a standard result in measure theory.
\begin{lemma}\label{lem:Cover}
Consider two measurable subsets $\mathcal{P}\subset \mathcal{Q}$ of $\Omega$. 
Assume that there are two constants $\varepsilon \in (0,1)$ and  $r \in \left(0,r_0\right]$ such that
\begin{itemize}
\item[i)] $\mathcal{L}^n\left(\mathcal{P}\right) \le \varepsilon \mathcal{L}^n\left(B_{r}(0)\right)$;
\item[ii)] for all $\xi \in \Omega$ and $\varrho \in (0,r]$, if $\mathcal{L}^n\left(\mathcal{P} \cap B_{\varrho}(\xi)\right) > \varepsilon \mathcal{L}^n\left(B_{\varrho}(\xi)\right)$ then $\Omega_{\varrho}(\xi)  \subset \mathcal{Q}$. 
\end{itemize}
Then there exists a constant $C=C(n)>0$ such that $\mathcal{L}^n\left(\mathcal{P}\right)\leq C \varepsilon \mathcal{L}^n\left(\mathcal{Q}\right)$.
\end{lemma}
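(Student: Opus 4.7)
The plan is to combine a Lebesgue-density stopping-time argument with the Vitali $5r$-covering lemma, using hypothesis (ii) as the bridge from $\mathcal{P}$ to $\mathcal{Q}$.

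First I would set up the stopping time. For $\mathcal{L}^n$-almost every $x\in\mathcal{P}$, the Lebesgue differentiation theorem gives $\mathcal{L}^n(\mathcal{P}\cap B_\varrho(x))/\mathcal{L}^n(B_\varrho(x))\to 1$ as $\varrho\to 0^+$, while hypothesis (i), together with the translation invariance $\mathcal{L}^n(B_r(x))=\mathcal{L}^n(B_r(0))$, forces the same ratio to be at most $\varepsilon$ at scale $r$. Since $\varrho\mapsto\mathcal{L}^n(\mathcal{P}\cap B_\varrho(x))$ is continuous (boundaries of balls being $\mathcal{L}^n$-null), I would define
$$\varrho_x := \sup\bigl\{\varrho\in(0,r]:\mathcal{L}^n(\mathcal{P}\cap B_\varrho(x)) > \varepsilon\,\mathcal{L}^n(B_\varrho(x))\bigr\}\in(0,r],$$
which, by continuity, satisfies $\mathcal{L}^n(\mathcal{P}\cap B_{\varrho_x}(x))=\varepsilon\,\mathcal{L}^n(B_{\varrho_x}(x))$ and density $\le\varepsilon$ at every $\varrho\ge\varrho_x$ (using (i) to extend past $r$). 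Choosing $\varrho_n\nearrow\varrho_x$ with strict inequality and invoking (ii) at each $\varrho_n$, then passing to the limit via $\bigcup_n\Omega_{\varrho_n}(x)=\Omega_{\varrho_x}(x)$, yields $\Omega_{\varrho_x}(x)\subset\mathcal{Q}$.

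Next, I would apply the Vitali $5r$-covering lemma to the family $\{B_{\varrho_x}(x)\}_x$ (admissible since $\varrho_x\le r\le r_0$) to extract a countable pairwise disjoint subfamily $\{B_{\varrho_{x_i}}(x_i)\}_i$ with $\mathcal{P}\subset\bigcup_i B_{5\varrho_{x_i}}(x_i)$ modulo a null set. Since $5\varrho_{x_i}>\varrho_{x_i}$, the density estimate at the dilated scale (from the defining property of $\varrho_{x_i}$, extended by (i) when $5\varrho_{x_i}>r$) gives
$$\mathcal{L}^n(\mathcal{P}\cap B_{5\varrho_{x_i}}(x_i)) \le \varepsilon\,\mathcal{L}^n(B_{5\varrho_{x_i}}(x_i)) = 5^n\varepsilon\,\mathcal{L}^n(B_{\varrho_{x_i}}(x_i)),$$
and summing produces $\mathcal{L}^n(\mathcal{P})\le 5^n\varepsilon\sum_i\mathcal{L}^n(B_{\varrho_{x_i}}(x_i))$. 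Disjointness together with $\Omega_{\varrho_{x_i}}(x_i)\subset\mathcal{Q}$ yields $\sum_i\mathcal{L}^n(\Omega_{\varrho_{x_i}}(x_i))\le\mathcal{L}^n(\mathcal{Q})$, which I would upgrade to a comparable bound on $\sum_i\mathcal{L}^n(B_{\varrho_{x_i}}(x_i))$, closing the estimate with a dimensional constant.

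The main obstacle is precisely this last conversion: for balls centred near $\partial\Omega$ one may have $\mathcal{L}^n(\Omega_{\varrho_{x_i}}(x_i))\ll\mathcal{L}^n(B_{\varrho_{x_i}}(x_i))$, and the sharp $\varepsilon$-factor in the conclusion is recovered only under a uniform measure-density property $\mathcal{L}^n(\Omega_\varrho(\xi))\ge c_n\,\mathcal{L}^n(B_\varrho(\xi))$ for $\xi\in\overline\Omega$ and $\varrho\le r_0$, which is standard in the regularity setting of the paper (implicit, e.g., when $\Omega$ is Lipschitz or Reifenberg-flat). A secondary delicate point is the continuity/limiting argument that promotes the strict density inequalities at $\varrho_n\nearrow\varrho_x$ into the inclusion $\Omega_{\varrho_x}(x)\subset\mathcal{Q}$; it rests on the monotone exhaustion of open balls and the measurability of $\mathcal{Q}$.
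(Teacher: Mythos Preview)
The paper does not supply its own proof of this lemma: it is recorded as Ingredient~$(\mathbf{A4})$ and explicitly deferred to the literature, with references to \cite[Lemma~4.2]{CC1995} and~\cite{Vitali08} (and to the Krylov--Safonov and Caffarelli--Peral adaptations). So there is no in-paper argument to compare against; your proposal is to be judged on its own terms.

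Your outline is the standard Lebesgue-density stopping time plus Vitali $5r$-covering, and the chain of inequalities is correct. The only substantive point is the one you flag yourself: passing from $\sum_i \mathcal{L}^n(\Omega_{\varrho_{x_i}}(x_i)) \le \mathcal{L}^n(\mathcal{Q})$ back to a bound on $\sum_i \mathcal{L}^n(B_{\varrho_{x_i}}(x_i))$ requires a uniform measure-density (or $(A)$-type) condition $\mathcal{L}^n(\Omega_\varrho(\xi)) \ge c_n\,\mathcal{L}^n(B_\varrho(\xi))$ for $\xi\in\Omega$, $\varrho\le r_0$. This is indeed an implicit structural assumption in the paper's applications (Reifenberg-flat domains satisfy it with $c_n$ close to $\tfrac12$, and the cited sources state the lemma under such hypotheses), so you are right to call it out rather than sweep it under the rug. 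With that caveat noted, your argument is complete and matches the standard proof in the references the paper invokes.
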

\end{taggedingre}
This lemma is a version of Calder\'on-Zygmund (or Vitali type) covering lemma that allows us to work with balls instead of cubes, see~\cite[Lemma 4.2]{CC1995} or~\cite{Vitali08}.  As far as we know, there have been various modified versions applications related to this  famous covering lemma. For instance, a parabolic version developed by N. V. Krylov and M. V. Safonov in~\cite{KS1980}, or some interesting works by others in~\cite{CP1998, Wang2}, etc. Lemma~\ref{lem:Cover} is one of the key roles to measure estimates concluded in our proofs.  For further reading on such decomposition lemma, we strongly recommend the reader to~\cite{CC1995, CP1998, Wang2} and the references therein.

\section{Fractional-maximal distribution inequalities}\label{sec:levelset}
In the context of our work, the purpose of this section is to recall the distribution functions introduced in~\cite{55Gra, AH}, that will be the basic tool to construct the definition of Lorentz spaces. Next, this part is to familiarize the reader with \emph{fractional-maximal distributions} (FMD) and some properties concerning this type of distribution function. Also, in the language of fractional-maximal distribution, we point out the boundedness properties of fractional maximal operators. Furthermore, in this section, we are going to state and prove the key results of the paper, Theorems~\ref{theo:G-lam-A} and~\ref{theo:G-lam-B}, being the idea of `good-$\lambda$' technique and in the spirit of the FMD.

\subsection{Fractional-maximal distribution functions (FMD)}
\label{sec:FMD}
In what follows, we always assume $\Omega$ is an arbitrary open domain in $\mathbb{R}^n$, $n \ge 2$. 
\begin{definition}[Distribution function,~\cite{55Gra, AH}]\label{def:df}
The distribution function of a Lebesgue measurable function $f$ on $\Omega$ is the function $d_f$ defined in $[0,\infty)$ as follows
\begin{align*}
d_f(\Omega;\lambda) = \mathcal{L}^n \left(\left\{x \in \Omega: \ |f(x)| > \lambda\right\}\right), \quad \lambda \ge 0.
\end{align*}
\end{definition}
\begin{definition}[Fractional-maximal distribution function (FMD)] \label{def:dG}
Let $0 \le \alpha \le n$ and $\mathcal{G} \in L^{1}_{\mathrm{loc}}(\mathbb{R}^n)$. Then, the fractional-maximal distribution function of $\mathcal{G}$, denoted by $d_{\mathcal{G}}^{\alpha}$, is the distribution function of $\mathbf{M}_{\alpha} \mathcal{G}$. More precisely, for every $\lambda \ge 0$, we define
\begin{align}\label{eq:def-dG}
d_{\mathcal{G}}^{\alpha}(\Omega; \lambda) := d_{\mathbf{M}_{\alpha}\mathcal{G}}(\Omega; \lambda) = \mathcal{L}^n \left(\mathcal{V}_{\alpha}(\mathcal{G}; \lambda) \cap \Omega\right), 
\end{align}
where the measurable subset $\mathcal{V}_{\alpha}(\mathcal{G}; \lambda)$ of $\mathbb{R}^n$ is defined by
\begin{align*}
\mathcal{V}_{\alpha}(\mathcal{G}; \lambda) := \left\{x \in \mathbb{R}^n: \ \mathbf{M}_{\alpha} \mathcal{G}(x)> \lambda\right\}. 
\end{align*}
\end{definition}
We shall denote by $\mathcal{V}_{\alpha}^{c}(\mathcal{G}; \lambda)$ the complement of  $\mathcal{V}_{\alpha}(\mathcal{G}; \lambda)$ in $\mathbb{R}^n$, this means 
\begin{align*}
\mathcal{V}_{\alpha}^{c}(\mathcal{G}; \lambda) = \left\{x \in \mathbb{R}^n: \ \mathbf{M}_{\alpha} \mathcal{G}(x) \le \lambda\right\}.
\end{align*}

Similar to what distribution functions give, one can observe that the FMD depends only on the fractional maximal operator  $\mathbf{M}_\alpha f$ and it provides information about the size (Lebesgue norm information) of this operator. In connection with boundedness property of fractional maximal functions, we will now discuss the following important properties of $d_\mathcal{G}^\alpha$, in Lemma~\ref{lem:M_alpha} below. 

\begin{lemma}\label{lem:M_alpha}
For every $s \ge 1$ and $\alpha \in \left[0,\frac{n}{s}\right)$, there exists $C=C(n,\alpha,s)>0$ such that
\begin{align}\label{eq:M-alpha}
d_{\mathcal{G}}^{\alpha}(\mathbb{R}^n; \lambda)  \le C \left({\lambda}^{-1}{\|\mathcal{G}\|_{L^s(\mathbb{R}^n)}}\right)^{\frac{ns}{n-\alpha s}},
\end{align}
for any $\lambda>0$ and $\mathcal{G} \in L^s(\mathbb{R}^n)$.
\end{lemma}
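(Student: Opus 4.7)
The plan is to prove this via a classical Hedberg-type pointwise bound followed by the weak-type $(s,s)$ estimate for the Hardy--Littlewood maximal operator. The target exponent $ns/(n-\alpha s)$ is the Sobolev conjugate associated with $(s,\alpha)$, and the proof amounts to recovering it by a standard truncation-and-optimize argument in the supremum defining $\mathbf{M}_{\alpha}\mathcal{G}$.

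First, I would split the supremum in Definition~\ref{def:Malpha} at an auxiliary radius $r>0$ (to be chosen later depending on $x$). For $\varrho\le r$, one uses $\varrho^{\alpha}\le r^{\alpha}$ and bounds the mean by $\mathbf{M}\mathcal{G}(x)$ to get $\varrho^{\alpha}\fint_{B_{\varrho}(x)}|\mathcal{G}|\,dy\le r^{\alpha}\mathbf{M}\mathcal{G}(x)$. For $\varrho>r$, I would apply H\"older's inequality with exponents $s$ and $s'$, giving
\begin{equation*}
\varrho^{\alpha}\fint_{B_{\varrho}(x)}|\mathcal{G}|\,dy \;\le\; \varrho^{\alpha}\,|B_{\varrho}(x)|^{-1/s}\|\mathcal{G}\|_{L^{s}(\mathbb{R}^n)} \;\le\; C(n,s)\,\varrho^{\alpha-n/s}\|\mathcal{G}\|_{L^{s}(\mathbb{R}^n)}.
\end{equation*}
Since $\alpha<n/s$, the exponent $\alpha-n/s$ is negative, so this term is maximised at $\varrho=r$, producing $C(n,s)\,r^{\alpha-n/s}\|\mathcal{G}\|_{L^{s}(\mathbb{R}^n)}$. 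Taking the supremum of the two cases yields
\begin{equation*}
\mathbf{M}_{\alpha}\mathcal{G}(x)\;\le\; r^{\alpha}\mathbf{M}\mathcal{G}(x)+C(n,s)\,r^{\alpha-n/s}\|\mathcal{G}\|_{L^{s}(\mathbb{R}^n)}.
\end{equation*}

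Next I would optimise in $r$, choosing it so that the two terms balance, i.e.\ $r^{n/s}=\|\mathcal{G}\|_{L^{s}(\mathbb{R}^n)}/\mathbf{M}\mathcal{G}(x)$ (this is harmless when $\mathbf{M}\mathcal{G}(x)>0$; the estimate is trivial otherwise). Substituting back produces the Hedberg-type pointwise inequality
\begin{equation*}
\mathbf{M}_{\alpha}\mathcal{G}(x)\;\le\; C(n,\alpha,s)\,\bigl(\mathbf{M}\mathcal{G}(x)\bigr)^{1-\alpha s/n}\,\|\mathcal{G}\|_{L^{s}(\mathbb{R}^n)}^{\alpha s/n}.
\end{equation*}
This in turn shows that $\{\mathbf{M}_{\alpha}\mathcal{G}>\lambda\}\subset\{\mathbf{M}\mathcal{G}>\mu\}$ with
\begin{equation*}
\mu \;=\; C^{-n/(n-\alpha s)}\,\lambda^{n/(n-\alpha s)}\,\|\mathcal{G}\|_{L^{s}(\mathbb{R}^n)}^{-\alpha s/(n-\alpha s)}.
\end{equation*}

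Finally, I would invoke the weak-type $(s,s)$ bound for the Hardy--Littlewood maximal operator, valid for every $s\ge 1$ (from weak-$(1,1)$ when $s=1$, and from the strong $L^s$ bound combined with Chebyshev when $s>1$), i.e.\ $d_{\mathbf{M}\mathcal{G}}(\mathbb{R}^n;\mu)\le C(n,s)\,\mu^{-s}\|\mathcal{G}\|_{L^{s}(\mathbb{R}^n)}^{s}$. Inserting the expression for $\mu$ and collecting the powers of $\|\mathcal{G}\|_{L^{s}(\mathbb{R}^n)}$ (the exponent consolidates as $\alpha s^{2}/(n-\alpha s)+s=ns/(n-\alpha s)$) delivers exactly \eqref{eq:M-alpha} with $C=C(n,\alpha,s)$. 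There is no serious obstacle here: the only mildly delicate point is verifying that the optimisation is legitimate and that the weak-type $(s,s)$ bound is available uniformly in $s\ge 1$, both of which are standard facts from the textbook references cited in the paper.
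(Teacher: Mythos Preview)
Your argument is correct and follows the classical Hedberg split-and-optimise route. The paper reaches the same pointwise inequality by a slightly different path: instead of truncating the supremum at a radius $r$, it first uses H\"older's inequality inside the average to get $[\mathbf{M}_{\alpha}\mathcal{G}(x)]^{s}\le \mathbf{M}_{\alpha s}(\mathcal{G}^{s})(x)$, and then factors the supremum defining $\mathbf{M}_{\alpha s}(\mathcal{G}^{s})$ directly as
\[
\mathbf{M}_{\alpha s}(\mathcal{G}^{s})(x)\le C\,[\mathbf{M}(\mathcal{G}^{s})(x)]^{1-\alpha s/n}\,\|\mathcal{G}\|_{L^{s}}^{\alpha s\cdot s/n},
\]
which is the Hedberg bound with $\mathcal{G}$ replaced by $\mathcal{G}^{s}$ and $\alpha$ by $\alpha s$. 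The upshot is that the paper then only needs the weak-$(1,1)$ estimate for $\mathbf{M}$ applied to $\mathcal{G}^{s}\in L^{1}$, whereas you invoke weak-$(s,s)$ for $\mathbf{M}$ applied to $\mathcal{G}$. Both routes are standard and equally short; yours is arguably the more familiar textbook presentation, while the paper's detour through $\mathcal{G}^{s}$ has the minor advantage of reducing everything to the single weak-$(1,1)$ input without appealing separately to strong $L^{s}$ boundedness when $s>1$.
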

\begin{proof}
For any $x \in \mathbb{R}^n$, the definition of $\mathbf{M}_{\alpha}$ in~\eqref{def:Malpha} and H{\"o}lder's inequality give us
\begin{align*}
\left[\mathbf{M}_{\alpha} \mathcal{G}(x)\right]^s &= \left(\sup_{\varrho>0} \varrho^{\alpha} \fint_{B_{\varrho}(x)}|\mathcal{G}(y)|dy\right)^s  \le \sup_{\varrho>0} \varrho^{\alpha s} \fint_{B_{\varrho}(x)}|\mathcal{G}(y)|^sdy = \mathbf{M}_{\alpha s} (\mathcal{G}^s)(x).
\end{align*}
Moreover, let us denote $\lambda_0 = \|\mathcal{G}\|^{s}_{L^s(\mathbb{R}^n)}$, one has
\begin{align*}
\mathbf{M}_{\alpha s} (\mathcal{G})^s(x) & = \sup_{\varrho>0} \left(\varrho^{-n} \int_{B_{\varrho}(x)}|\mathcal{G}(y)|^s dy\right)^{\frac{n-\alpha s}{n}} \left(\int_{B_{\varrho}(x)}|\mathcal{G}(y)|^s dy\right)^{\frac{\alpha s}{n}}  \le C \left[\mathbf{M}\mathcal{G}^s(x)\right]^{1 - \frac{\alpha s}{n}} \lambda_0^{\frac{\alpha s}{n}}.
\end{align*}
Combining two above inequalities and definition of $d_{\mathcal{G}}^{\alpha}$ in~\eqref{eq:def-dG}, for every $\lambda>0$ there holds
\begin{align*}
d_{\mathcal{G}}^{\alpha}(\mathbb{R}^n; \lambda)   & \le  \mathcal{L}^n \left( \left\{ \mathbf{M}\mathcal{G}^s > C\lambda_0^{-\frac{\alpha s}{n - \alpha s}} \lambda^{\frac{n s}{n-\alpha s}}\right\} \right)  \le  C \lambda_0^{\frac{\alpha s}{n - \alpha s}} \lambda^{-\frac{n s}{n-\alpha s}} \int_{\mathbb{R}^n}|\mathcal{G}(x)|^s dx,
\end{align*}
which allows us to conclude~\eqref{eq:M-alpha}.
\end{proof}

\subsection{Proofs of level-set inequalities on the idea of FMD}
\begin{lemma}\label{lem:A1} 
Let $\alpha \in [0,n)$ and $\mathcal{F}$, $\mathcal{G} \in L^1(\Omega; \mathbb{R}^+)$ 
satisfying the global comparison~\ref{ing:A3}. Assume that
\begin{align}\label{asmp:lem-A1}
\mathcal{V}_{\alpha}^{c}(\mathcal{F}; \kappa\lambda) \cap \Omega \neq \emptyset, \quad \mbox{ for some } \ \kappa, \lambda>0. 
\end{align}
 Then there exists a constant $C = C(n,\alpha)>0$ such that 
\begin{align}\label{ineq:A1}
d_{\mathcal{G}}^{\alpha}(\Omega; \sigma \lambda) \le C\left(\frac{\kappa}{\sigma}\right)^{\frac{n}{n-\alpha}} \mathrm{diam}(\Omega)^n, \quad \mbox{ for every} \ \sigma>0.
\end{align} 
\end{lemma}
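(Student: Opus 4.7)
The plan is to extract a global $L^{1}(\Omega)$ bound on $\mathcal{F}$ from the non-emptiness of $\mathcal{V}_{\alpha}^{c}(\mathcal{F};\kappa\lambda)\cap\Omega$, to transfer it to a bound on $\mathcal{G}$ via the global comparison~\ref{ing:A3}, and then to feed that bound into the weak-type estimate of Lemma~\ref{lem:M_alpha} with $s=1$ to read off~\eqref{ineq:A1}.

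Concretely, I would fix any base point $x_{0}\in\mathcal{V}_{\alpha}^{c}(\mathcal{F};\kappa\lambda)\cap\Omega$, so that the defining supremum of $\mathbf{M}_{\alpha}\mathcal{F}(x_{0})\le\kappa\lambda$ gives
$$
\varrho^{\alpha}\fint_{B_{\varrho}(x_{0})}\mathcal{F}(y)\,dy \le \kappa\lambda \qquad\text{for every }\varrho>0.
$$
The crucial geometric observation is that $x_{0}\in\Omega$ forces $\Omega\subset\overline{B_{R}(x_{0})}$ with $R:=\mathrm{diam}(\Omega)$, so applying the preceding inequality at any $\varrho>R$ and extending $\mathcal{F}$ by zero to $\mathbb{R}^{n}$ produces, after letting $\varrho\to R^{+}$, the dimensionally sharp global bound
$$
\int_{\Omega}\mathcal{F}(y)\,dy \le C(n)\,\kappa\lambda\,R^{n-\alpha}.
$$
Invoking~\ref{ing:A3} converts this into the analogous control $\|\mathcal{G}\|_{L^{1}(\Omega)}\le C\,\kappa\lambda\,R^{n-\alpha}$.

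To close the argument I would extend $\mathcal{G}$ by zero outside $\Omega$, use the trivial monotonicity $d_{\mathcal{G}}^{\alpha}(\Omega;\mu)\le d_{\mathcal{G}}^{\alpha}(\mathbb{R}^{n};\mu)$, and apply Lemma~\ref{lem:M_alpha} with $s=1$ (admissible because $\alpha\in[0,n)$) at the threshold $\mu=\sigma\lambda$ to conclude
$$
d_{\mathcal{G}}^{\alpha}(\Omega;\sigma\lambda) \le C\left(\frac{\|\mathcal{G}\|_{L^{1}(\mathbb{R}^{n})}}{\sigma\lambda}\right)^{\frac{n}{n-\alpha}} \le C\left(\frac{\kappa}{\sigma}\right)^{\frac{n}{n-\alpha}} R^{n},
$$
which is precisely~\eqref{ineq:A1}. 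No step is technically difficult; the only point worth double-checking is the geometric reduction that turns a single-point bound on $\mathbf{M}_{\alpha}\mathcal{F}$ at $x_{0}$ into an $L^{1}(\Omega)$ bound with the correct power $R^{n-\alpha}$, together with the bookkeeping verification that the exponent $\frac{n}{n-\alpha}$ delivered by Lemma~\ref{lem:M_alpha} combines with this $R^{n-\alpha}$ to produce exactly the stated $(\kappa/\sigma)^{n/(n-\alpha)}R^{n}$.
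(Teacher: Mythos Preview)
Your argument is correct and matches the paper's own proof essentially step for step: both combine the weak-type bound of Lemma~\ref{lem:M_alpha} at $s=1$, the global comparison~\ref{ing:A3} (used in the direction $\int_{\Omega}\mathcal{G}\le C\int_{\Omega}\mathcal{F}$, as in the paper's own proof), and the single-point control $\mathbf{M}_{\alpha}\mathcal{F}(x_{0})\le\kappa\lambda$ together with $\Omega\subset B_{D_{0}}(x_{0})$ to bound $\int_{\Omega}\mathcal{F}$. The only difference is the order in which you apply these three ingredients, which is immaterial.
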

\begin{proof}
Thanks to inequality~\eqref{eq:M-alpha} in Lemma~\ref{lem:M_alpha} and the global comparison~\eqref{eq:GC}, one has 
\begin{align} \nonumber 
d_{\mathcal{G}}^{\alpha}(\Omega; \sigma \lambda)  \le C_{n,\alpha} \left((\sigma \lambda)^{-1}\int_{\Omega}{\mathcal{G}(x) dx}\right)^{\frac{n}{n-\alpha}} \le C_{n,\alpha} \left((\sigma \lambda)^{-1}\int_{\Omega}{\mathcal{F}(x) dx}\right)^{\frac{n}{n-\alpha}}.
\end{align}
Due to~\eqref{asmp:lem-A1}, one can find $z_0 \in \Omega$ such that ${\mathbf{M}}_{\alpha}{\mathcal{F}}(z_0) \le \kappa \lambda$. Moreover, by the definition of fractional maximal function $\mathbf{M}_{\alpha}$, there holds 
\begin{align} \nonumber 
\int_{\Omega}{\mathcal{F}(x) dx} \le C_{n} D_0^n \fint_{B_{D_0}(z_0)}{\mathcal{F}(x) dx} \le C_{n} D_0^{n-\alpha}  {\mathbf{M}}_{\alpha}\mathcal{F}(z_0) \le C_{n} D_0^{n-\alpha} \kappa \lambda, 
\end{align}
where $D_0 = \mathrm{diam}(\Omega)$. Therefore we may conclude from two previous inequalities that
\begin{align*}
d_{\mathcal{G}}^{\alpha}(\Omega; \sigma \lambda)  &  \le C_{n,\alpha} \left(\frac{\kappa}{\sigma}\right)^{\frac{n}{n-\alpha}} D_0^n, 
\end{align*}
which leads to~\eqref{ineq:A1} and completes the proof.
\end{proof}

Moreover, it is worth highlighting some inequalities related to fractional-maximal distributions in this study. With these properties in hand, we will directly obtain the important level-set inequalities. They play an essential role in the description of our approach later.
\begin{lemma}\label{lem:A2}
Let $\alpha \in [0,n]$ and $\mathcal{G} \in L^1(\Omega; \mathbb{R}^+)$ such that 
\begin{align}\label{asmp:lem-A2}
\mathcal{V}_{\alpha}^{c}(\mathcal{G}; \lambda) \cap \Omega_{\varrho}(\xi)  \neq \emptyset, \quad \mbox{ for some }  \lambda, \varrho>0  \mbox{ and } \xi \in \Omega. 
\end{align}
Then for all $\sigma > 3^n$   there holds 
\begin{align}\label{eq:res11}
d_{\mathcal{G}}^{\alpha}(\Omega_{\varrho}(\xi); \sigma \lambda)  \le d_{\chi_{B_{2\varrho}(\xi)} \mathcal{G}}^{\alpha}(\Omega_{\varrho}(\xi); \sigma \lambda).
\end{align}
\end{lemma}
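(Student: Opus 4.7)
The plan is to show the pointwise comparison
\[
\mathbf{M}_{\alpha}\mathcal{G}(x) \le \max\bigl\{\mathbf{M}_{\alpha}(\chi_{B_{2\varrho}(\xi)}\mathcal{G})(x), \, 3^{n}\lambda\bigr\}
\qquad \text{for every } x\in \Omega_{\varrho}(\xi),
\]
from which the lemma follows immediately: if $x\in \Omega_{\varrho}(\xi)$ lies in $\mathcal{V}_{\alpha}(\mathcal{G};\sigma\lambda)$ with $\sigma>3^{n}$, then the first term on the right must dominate and exceed $\sigma\lambda$, giving the inclusion
\[
\mathcal{V}_{\alpha}(\mathcal{G};\sigma\lambda) \cap \Omega_{\varrho}(\xi) \subset \mathcal{V}_{\alpha}(\chi_{B_{2\varrho}(\xi)}\mathcal{G};\sigma\lambda) \cap \Omega_{\varrho}(\xi),
\]
which is exactly~\eqref{eq:res11}.

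To establish the pointwise bound, I would use the hypothesis~\eqref{asmp:lem-A2} to fix a point $y_{0}\in \Omega_{\varrho}(\xi)$ with $\mathbf{M}_{\alpha}\mathcal{G}(y_{0})\le \lambda$, and then split the supremum in the definition of $\mathbf{M}_{\alpha}\mathcal{G}(x)$ according to whether the radius $r$ of the test ball $B_{r}(x)$ is at most $\varrho$ or larger. In the \emph{small-radius} case $r\le\varrho$, the triangle inequality gives $B_{r}(x)\subset B_{2\varrho}(\xi)$ because $|x-\xi|<\varrho$, so the integrand $\mathcal{G}$ may be replaced by $\chi_{B_{2\varrho}(\xi)}\mathcal{G}$ with no change, and the corresponding contribution is trivially bounded by $\mathbf{M}_{\alpha}(\chi_{B_{2\varrho}(\xi)}\mathcal{G})(x)$.

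In the \emph{large-radius} case $r>\varrho$, I would use that $|y_{0}-x|\le |y_{0}-\xi|+|\xi-x|<2\varrho<2r$ to conclude $B_{r}(x)\subset B_{3r}(y_{0})$. Since $|B_{3r}|=3^{n}|B_{r}|$, one then gets
\[
r^{\alpha}\fint_{B_{r}(x)}\mathcal{G}(y)\,dy \;\le\; 3^{n}\cdot r^{\alpha}\fint_{B_{3r}(y_{0})}\mathcal{G}(y)\,dy \;\le\; 3^{n-\alpha}\,(3r)^{\alpha}\fint_{B_{3r}(y_{0})}\mathcal{G}(y)\,dy \;\le\; 3^{n-\alpha}\,\mathbf{M}_{\alpha}\mathcal{G}(y_{0}) \;\le\; 3^{n}\lambda,
\]
using $\alpha\ge 0$ in the last step. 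Taking the supremum of the two cases yields the claimed pointwise inequality.

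There is no real obstacle here; the argument is a routine ``tail-splitting'' of the maximal function using the control on $\mathbf{M}_{\alpha}\mathcal{G}$ at the comparison point $y_{0}$. The only subtlety worth double-checking is that the constant $3^{n}$ in the statement is sharp enough: my computation actually gives the slightly better factor $3^{n-\alpha}$, so the assumption $\sigma>3^{n}$ is comfortably sufficient across the full range $\alpha\in[0,n]$, and strict inequality $\sigma>3^{n}$ is indeed what allows the final dichotomy argument to eliminate the tail contribution.
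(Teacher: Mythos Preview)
Your proof is correct and follows essentially the same approach as the paper: both split the supremum in $\mathbf{M}_{\alpha}\mathcal{G}$ at radius $\varrho$, use the inclusion $B_{r}(x)\subset B_{2\varrho}(\xi)$ for small radii and $B_{r}(x)\subset B_{3r}(y_0)$ for large radii, and arrive at the same pointwise max-bound (with the same constant $3^{n-\alpha}\le 3^n$). The only cosmetic difference is that the paper phrases the small-radius part via the cutoff operator $\mathbf{M}_{\alpha}^{\varrho}$ and records the slightly sharper identity $\mathcal{V}_{\alpha}(\mathcal{G};\sigma\lambda)\cap\Omega_{\varrho}(\xi)=\{\mathbf{M}_{\alpha}^{\varrho}(\chi_{B_{2\varrho}(\xi)}\mathcal{G})>\sigma\lambda\}\cap\Omega_{\varrho}(\xi)$, which it reuses later; for the lemma as stated your inclusion is enough.
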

\begin{proof}
For any $\zeta \in B_{\varrho}(\xi)$, we can present $\mathbf{M}_{\alpha}\mathcal{G}$ as the maximum of two cutoff fractional maximal functions of $\mathcal{G}$ at level $\varrho>0$ defined in~\eqref{eq:frac-Malpha}, as follows 
\begin{align}\label{est:max}
{\mathbf{M}}_{\alpha}{\mathcal{G}}(\zeta)  & = \max \left\{ {\mathbf{M}}_{\alpha}^{\varrho}{\mathcal{G}}(\zeta); \ {\mathbf{T}}_{\alpha}^{\varrho}{\mathcal{G}}(\zeta) \right\}.
\end{align}
Furthermore, assumption~\eqref{asmp:lem-A2} allows us to find $z_1 \in \Omega_{\varrho}(\xi)$ satisfying ${\mathbf{M}}_{\alpha}{\mathcal{G}}(z_1) \le \lambda$. It is easy to check that 
$$B_{r}(\zeta) \subset B_{r+\varrho}(\xi) \subset B_{r+2\varrho}(z_1) \subset B_{3r}(z_1), \ \mbox{ for all } \ r \ge \varrho.$$ 
So we may estimate ${\mathbf{T}}_{\alpha}^{\varrho}{\mathcal{G}}$ by increasing the integral over $B_{r}(\zeta)$ to the one over $B_{3r}(z_1)$, one has
\begin{align}\nonumber
{\mathbf{T}}_{\alpha}^{\varrho}{\mathcal{G}}(\zeta) = \sup_{r \ge \varrho} \ r^{\alpha}{\fint_{B_{r}(\zeta)}{{\mathcal{G}}(x) dx}} & \le  \sup_{r \ge \varrho} \ \frac{\mathcal{L}^n(B_{3r}(z_1))}{\mathcal{L}^n(B_{r}(\zeta))} {r^{\alpha}  \fint_{B_{3r}(z_1)}{{\mathcal{G}}(x) dx}} \\ \nonumber
& \le 3^{n-\alpha} \sup_{r \ge \varrho} \ (3r)^{\alpha} {\fint_{B_{3r}(z_1)}{{\mathcal{G}}(x) dx}} \\  \label{est:T}
& \le 3^n {\mathbf{M}}_{\alpha}{\mathcal{G}}(z_1) \le 3^n \lambda.
\end{align} 
Substituting~\eqref{est:T} to~\eqref{est:max}, one obtains that
\begin{align}\nonumber
{\mathbf{M}}_{\alpha}{\mathcal{G}}(\zeta)  & = \max \left\{ \sup_{0 < r < \varrho} \ r^{\alpha}{\fint_{B_{r}(\zeta)}{\chi_{B_{2\varrho}(\xi)} {\mathcal{G}}(x) dx}}; \ {\mathbf{T}}_{\alpha}^{\varrho}{\mathcal{G}}(\zeta) \right\} \\ \label{eq:res9}
& \le \max \left\{ {\mathbf{M}}_{\alpha}^{\varrho}(\chi_{B_{2\varrho}(\xi)} \mathcal{G})(\zeta); \ 3^n \lambda \right\}, \ \mbox{ for all }  \zeta \in B_{\varrho}(\xi).
\end{align}
Here we emphasize that the first equality in~\eqref{eq:res9} comes from the fact that 
$$B_{r}(\zeta) \subset B_{2\varrho}(\xi), \mbox{ for all } r \in (0,\varrho).$$  
Finally,   we may conclude from~\eqref{eq:res9} that for all $\sigma > 3^n$ there holds
\begin{align}\label{eq:M-rho}
\mathcal{V}_{\alpha}(\mathcal{G}; \sigma \lambda) \cap \Omega_{\varrho}(\xi) = \left\{\zeta \in \Omega: \ \mathbf{M}_{\alpha}^{\varrho}(\chi_{B_{2\varrho}(\xi)} \mathcal{G})(\zeta)> \sigma \lambda\right\}\cap \Omega_{\varrho}(\xi),
\end{align}
which leads to inequality~\eqref{eq:res11}. 
\end{proof}

\begin{lemma}\label{lem:A3} 
Let $\gamma>1$, $\alpha \in [0,\frac{n}{\gamma})$ and two functions $\mathcal{F}$, $\mathcal{G}$ satisfy local comparison~\ref{ing:A2_1}. Then for any $\sigma > 3^n$, one can find $\kappa = \kappa(\sigma)>0$ such that if 
\begin{align}\label{asmp:lem-A3}
\mathcal{V}_{\alpha}^{c}(\mathcal{G}; \lambda) \cap \Omega_{\varrho}(\xi)  \neq \emptyset \ \mbox{ and } \ \mathcal{V}_{\alpha}^{c}(\mathcal{F}; \kappa \lambda) \cap \Omega_{\varrho}(\xi)  \neq \emptyset,
\end{align}
for some $\xi \in \Omega$ and $\varrho$, $\lambda \in \mathbb{R}^+$ then the following inequality
\begin{align}\label{eq:iigoal} 
d_{\mathcal{G}}^{\alpha}(\Omega_{\varrho}(\xi); \sigma \lambda)  \le C  \sigma^{-\frac{n \gamma}{n-\alpha \gamma}} \varrho^n,
\end{align}
holds. Here, the positive constant $C$ depends only on $n$, $\alpha$, $\gamma$, $\tilde{c}$.
\end{lemma}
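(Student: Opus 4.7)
The plan is to combine Lemma~\ref{lem:A2} with the quasi-triangle decomposition $\mathcal{G}\le\tilde c(\varphi+\psi)$ supplied by~\ref{ing:A2_1}, and then to estimate the two resulting pieces independently via the weak-type bound of Lemma~\ref{lem:M_alpha}: the regular part $\varphi$ at exponent $s=\gamma$ (to exploit the reverse H\"older class it lies in) and the error $\psi$ at exponent $s=1$, where~\eqref{eq:LC} will extract smallness from $\varepsilon$ and from $\kappa$. The two nonemptiness hypotheses in~\eqref{asmp:lem-A3} are what let me convert fractional-maximal bounds at good points into averaged $L^1$ control of $\mathcal{G}$ and $\mathcal{F}$ on $B_{2\varrho}(\xi)$.

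First, since $\sigma>3^n$ and $\mathcal{V}_\alpha^c(\mathcal{G};\lambda)\cap\Omega_\varrho(\xi)\ne\emptyset$, Lemma~\ref{lem:A2} reduces the task to estimating $\mathcal{L}^n(\{\mathbf{M}_\alpha(\chi_{B_{2\varrho}(\xi)}\mathcal{G})>\sigma\lambda\})$. Invoking~\ref{ing:A2_1} at $(\nu,r)=(\xi,\varrho)$ produces $\varphi,\psi$ on $\Omega_{2\varrho}(\xi)$ with $\varphi\in\mathcal{RH}^{\gamma}(\Omega_\varrho(\xi))$, $(\mathcal{G},\varphi,\psi)\in\mathrm{Q}(B_{2\varrho}(\xi))$, and~\eqref{eq:LC} valid for every $\varepsilon\in(0,1)$. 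Extending $\varphi,\psi$ by zero outside $\Omega_{2\varrho}(\xi)$ and using the quasi-triangle inequality, I split
\[
d_{\mathcal{G}}^{\alpha}(\Omega_\varrho(\xi);\sigma\lambda)\le I_\varphi+I_\psi,
\]
with $I_\varphi=\mathcal{L}^n(\{\mathbf{M}_\alpha(\chi_{B_{2\varrho}(\xi)}\varphi)>\sigma\lambda/(2\tilde c)\})$ and an analogous definition of $I_\psi$.

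Next, I apply Lemma~\ref{lem:M_alpha} to $I_\varphi$ with $s=\gamma$ (admissible since $\alpha<n/\gamma$) and to $I_\psi$ with $s=1$. Using the reverse H\"older property of $\varphi$ to convert the $L^\gamma$ norm into an $L^1$ average, and~\eqref{eq:LC} to control $\fint\psi$, both quantities are reduced to averages of $\mathcal{G}$ and $\mathcal{F}$ over $B_{2\varrho}(\xi)$. At this point the two hypotheses in~\eqref{asmp:lem-A3} enter: picking $z_1\in\mathcal{V}_\alpha^c(\mathcal{G};\lambda)\cap\Omega_\varrho(\xi)$ and $z_2\in\mathcal{V}_\alpha^c(\mathcal{F};\kappa\lambda)\cap\Omega_\varrho(\xi)$, and comparing $B_{2\varrho}(\xi)$ with $B_{3\varrho}(z_i)$, give
\[
\fint_{B_{2\varrho}(\xi)}\mathcal{G}\,dx\le C\varrho^{-\alpha}\lambda,\qquad \fint_{B_{2\varrho}(\xi)}\mathcal{F}\,dx\le C\varrho^{-\alpha}\kappa\lambda.
\]
Substituting back yields a bound of the form $I_\varphi+I_\psi\le C\sigma^{-n\gamma/(n-\alpha\gamma)}\varrho^n\,\Psi(\varepsilon,\kappa)$, where $\Psi$ is a polynomial in $\varepsilon$, $c_\varepsilon$ and $\kappa$. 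Fixing $\varepsilon=1/2$ and choosing $\kappa=\kappa(\sigma)$ to be a suitable negative power of $\sigma$ then forces $\Psi$ below a constant depending only on $n,\alpha,\gamma,\tilde c$, which is~\eqref{eq:iigoal}.

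The main obstacle I anticipate lies in a scale mismatch in the $\varphi$-estimate: the class $\mathcal{RH}^{\gamma}(\Omega_\varrho(\xi))$ only controls $\int_{\Omega_\varrho(\xi)}\varphi^\gamma$, whereas Lemma~\ref{lem:M_alpha} applied to $\chi_{B_{2\varrho}(\xi)}\varphi$ naturally wants $\int_{B_{2\varrho}(\xi)}\varphi^\gamma$. Bridging this gap will require either an annular decomposition $\chi_{B_{2\varrho}(\xi)}\varphi=\chi_{B_\varrho(\xi)}\varphi+\chi_{B_{2\varrho}(\xi)\setminus B_\varrho(\xi)}\varphi$ (with the boundary piece absorbed into the cutoff character of $\mathbf{M}_\alpha^\varrho$ and the nonemptiness hypothesis on $\mathcal{G}$) or, equivalently, applying~\ref{ing:A2_1} at a moderately enlarged scale to push the reverse H\"older inequality onto $\Omega_{2\varrho}(\xi)$. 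A secondary point that needs care is the precise tuning of $\kappa=\kappa(\sigma)$: since the weak-type exponent in the $\psi$-term is the larger $n/(n-\alpha)$, one must verify that $\kappa$ can be chosen to make the $\psi$-contribution compatible with the sharper exponent $n\gamma/(n-\alpha\gamma)$ dictated by the $\varphi$-term without inflating the constant.
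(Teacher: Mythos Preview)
Your overall architecture matches the paper's: Lemma~\ref{lem:A2} followed by the quasi-triangle split $\mathcal{G}\le\tilde c(\varphi+\psi)$, then Lemma~\ref{lem:M_alpha} with $s=\gamma$ on the $\varphi$-piece (plus reverse H\"older) and $s=1$ on the $\psi$-piece, and finally the averaged bounds on $\mathcal{G},\mathcal{F}$ coming from the two points $z_1,z_2$. But two steps are genuinely off.

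\textbf{The parameter choice is the real gap.} After carrying out your computation honestly, the two pieces do \emph{not} assemble into $C\sigma^{-\frac{n\gamma}{n-\alpha\gamma}}\varrho^n\,\Psi(\varepsilon,c_\varepsilon,\kappa)$ with $\Psi$ independent of $\sigma$. The $\psi$-term, coming from the $s=1$ weak-type bound together with~\eqref{eq:LC}, gives
\[
I_\psi \;\le\; C\Big(\sigma^{-1}(\varepsilon+c_\varepsilon\kappa)\Big)^{\frac{n}{n-\alpha}}\varrho^n,
\]
whose $\sigma$-exponent $\frac{n}{n-\alpha}$ is \emph{strictly smaller} than the target $\frac{n\gamma}{n-\alpha\gamma}$ (since $\gamma>1$). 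With $\varepsilon=\tfrac12$ fixed, $I_\psi\gtrsim\sigma^{-n/(n-\alpha)}\varrho^n$, which dominates $\sigma^{-n\gamma/(n-\alpha\gamma)}\varrho^n$ as $\sigma\to\infty$, and no choice of $\kappa$ alone can repair this because the factor $\varepsilon+c_\varepsilon\kappa\ge\varepsilon=\tfrac12$. The fix in the paper is to let $\varepsilon$ depend on $\sigma$: one takes
\[
\varepsilon=\sigma^{-\frac{n(\gamma-1)}{n-\alpha\gamma}},\qquad \kappa=\varepsilon\,c_\varepsilon^{-1},
\]
so that $(\sigma^{-1}(\varepsilon+c_\varepsilon\kappa))^{n/(n-\alpha)}\le C\sigma^{-n\gamma/(n-\alpha\gamma)}$ and both terms line up. Your ``secondary point'' diagnosed the exponent mismatch correctly but attributed the cure to $\kappa$; it must come from $\varepsilon$.

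\textbf{The scale mismatch you flagged is resolved by recentering, not by an annular trick.} The paper does exactly what you suggest in your second alternative: it applies~\ref{ing:A2_1} at an enlarged ball. Concretely, if $B_{2\varrho}(\xi)\subset\Omega$ one sets $(\nu,R)=(\xi,2\varrho)$; otherwise one picks $\nu\in\partial\Omega$ with $|\xi-\nu|\le 2\varrho$ and $R=4\varrho$, so that $B_{2\varrho}(\xi)\subset B_R(\nu)$ and $\nu\in\overline\Omega$. Then $\varphi\in\mathcal{RH}^{\gamma}(\Omega_R(\nu))$ controls $\fint_{B_R(\nu)}\varphi^\gamma$ by $\big(\fint_{B_{2R}(\nu)}\varphi\big)^\gamma$, and since $\chi_{B_{2\varrho}(\xi)}\le\chi_{B_R(\nu)}$ the localisation from Lemma~\ref{lem:A2} feeds directly into this estimate. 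Working at $(\xi,\varrho)$ as you propose leaves the annulus $B_{2\varrho}(\xi)\setminus B_\varrho(\xi)$ unhandled; the recentering avoids this cleanly.
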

\begin{proof}
If $B_{2\varrho}(\xi) \subset \Omega$ then we take $R = 2\varrho$ and $\nu = \xi$. Otherwise, if $B_{2\varrho}(\xi) \cap \Omega^c \neq \emptyset$, let us take $R = 4\varrho$ and $\nu \in \partial \Omega$ such that $|\xi- \nu| = \mathrm{dist}(x,\partial \Omega) \le 2 \varrho$. With this choice of $R$ and $\nu$, it is easily to check that $B_{2\varrho}(\xi) \subset B_{R}(\nu)$. The left-hand side of~\eqref{eq:iigoal} can be estimated by applying Lemma~\ref{lem:A2} with $\sigma > 3^n$ and using the definition of quasi-triangle triplet that $\mathcal{G} \le \tilde{c} (\varphi + \psi)$ in ~\eqref{cond:phi-psi}, it follows
\begin{align}\nonumber
d_{\mathcal{G}}^{\alpha}(\Omega_{\varrho}(\xi); \sigma \lambda) & \le d_{\chi_{B_{R}(\nu)}\mathcal{G}}^{\alpha}(\Omega_{\varrho}(\xi); \sigma \lambda) \\  \label{eq:estV-1}
& \le d_{\chi_{B_{R}(\nu)}\varphi}^{\alpha}(\Omega_{\varrho}(\xi); \tilde{c}^{-1} \sigma \lambda)   + d_{\chi_{B_{R}(\nu)}\psi}^{\alpha}(\Omega_{\varrho}(\xi); \tilde{c}^{-1} \sigma \lambda). 
\end{align}
To estimate two terms on the right hand side of~\eqref{eq:estV-1}, we apply Lemma~\ref{lem:M_alpha} with $s = 1$ and then $s = \gamma>1$, respectively. It is easy to rewrite in the form of the average integral as
\begin{align} \nonumber
d_{\mathcal{G}}^{\alpha}(\Omega_{\varrho}(\xi); \sigma \lambda) & \le  C\left(\frac{\tilde{c}}{\sigma \lambda}  R^n \fint_{B_{R}(\nu)}\psi(x) dx\right)^{\frac{n}{n-\alpha}} \\ \label{eq:estV-2a}
& \qquad \qquad + C \left(\left(\frac{\tilde{c}}{\sigma \lambda}\right)^{\gamma} R^n  \fint_{B_{R}(\nu)} |\varphi (x)|^{\gamma} dx \right)^{\frac{n}{n-\alpha \gamma}}.
\end{align}
Since $\varphi \in \mathcal{RH}^{\gamma}(\Omega)$ and the inequality $\varphi \le  \tilde{c} (\mathcal{G} + \psi)$ in~\eqref{cond:phi-psi}, there holds
\begin{align} \nonumber
\fint_{B_{R}(\nu)} |\varphi (x)|^{\gamma} dx & \le  C \left(\fint_{B_{2R}(\nu)} |\varphi (x)| dx\right)^{\gamma} \\ \label{eq:com-2c}
 & \le C \left(\tilde{c}\fint_{B_{2R}(\nu)} \mathcal{G}(x) dx + \tilde{c}\fint_{B_{2R}(\nu)} \psi(x) dx\right)^{\gamma}.
\end{align}
On the other hand, assumption~\eqref{asmp:lem-A3} ensures the existence of $z_1, z_2 \in \Omega_{\varrho}(\xi)$ satisfying ${\mathbf{M}}_{\alpha}{\mathcal{G}}(z_1) \le \lambda$ and ${\mathbf{M}}_{\alpha}{\mathcal{F}}(z_2) \le \kappa \lambda$. For this reason and noting that 
$$B_{2R}(\nu) \subset B_{3R}(\xi) \subset B_{3R + \varrho}(z_1) \cap B_{3R + \varrho}(z_2) \subset B_{4R}(z_1) \cap B_{4R}(z_2),$$ 
it gives us
\begin{align}\label{eq:com-2a}
\fint_{B_{2R}(\nu)}{\mathcal{G}(x)dx} & \le  2^n \fint_{B_{4R}(z_1)}{\mathcal{G}(x)dx}  \le 2^n (4R)^{-\alpha} {\mathbf{M}}_{\alpha} \mathcal{G}(x)(z_1) \le 2^n  R^{-\alpha} \lambda,
\end{align}
and similarly
\begin{align}\label{eq:com-2b}
\fint_{B_{2R}(\nu)}{\mathcal{F}(x)dx}  \le 2^n (4R)^{-\alpha} {\mathbf{M}}_{\alpha}\mathcal{F}(x)(z_2) \le 2^n R^{-\alpha} \kappa \lambda.
\end{align} 
Under the local comparison~\eqref{eq:LC}, we deduce from~\eqref{eq:com-2a} and~\eqref{eq:com-2b} that
\begin{align}\label{est:psi}
\fint_{B_{R}(\nu)}{\psi(x) dx} & \le 2^{n} (\varepsilon +c_{\varepsilon} \kappa) R^{-\alpha} \lambda, \quad \mbox{ for all } \ \varepsilon \in (0,1).
\end{align}
Combining between~\eqref{eq:estV-2a} with~\eqref{eq:com-2c},~\eqref{eq:com-2a} and~\eqref{est:psi}, it concludes that
\begin{align*}
d_{\mathcal{G}}^{\alpha}(\Omega_{\varrho}(\xi); \sigma \lambda) &\le  C\left[2^{n} \tilde{c} {\sigma}^{-1} (\varepsilon +c_{\varepsilon} \kappa) \right]^{\frac{n}{n-\alpha}} R^n  + C\left[2^{n}  \tilde{c}^2 {\sigma}^{-1} (1 + \varepsilon +c_{\varepsilon} \kappa) \right]^{\frac{n \gamma}{n-\alpha \gamma}} R^n, 
\end{align*}
which guarantees~\eqref{eq:iigoal} by taking $\varepsilon= \sigma^{-\frac{n(\gamma-1)}{n-\alpha\gamma}}$ and  $\kappa = \varepsilon c_{\varepsilon}^{-1}\in (0,\varepsilon)$.
\end{proof}

Having the ingredients and above-mentioned technical lemmas in mind, we are now ready to prove the first main theorem \ref{theo-A}. Our proof of Theorem \ref{theo-A} mainly relies on what obtained from Theorem~\ref{theo:G-lam-A} and Theorem~\ref{theo:G-lam-B} from below. 

In Theorem \ref{theo:G-lam-A}, we show that the fractional-maximal distribution inequality holds under hypotheses related to the local comparison~\ref{ing:A2_1} and the global one~\ref{ing:A3}. In particular, all parameters in this inequality still depend on a very small number $\varepsilon$. 

\begin{theorem} \label{theo:G-lam-A}
Let $\gamma>1$, $\alpha \in [0,\frac{n}{\gamma})$ and two functions $\mathcal{F}$, $\mathcal{G} \in L^1(\Omega; \mathbb{R}^+)$ satisfy both local comparison~\ref{ing:A2_1} and the global one~\ref{ing:A3}. Then there exists $\varepsilon_0 \in (0,1)$ such that the following inequality
\begin{align}\label{ineq:dG}
d_{\mathcal{G}}^{\alpha}(\Omega; \sigma \lambda)   \le C \varepsilon d_{\mathcal{G}}^{\alpha}(\Omega; \lambda)  + d_{\mathcal{F}}^{\alpha}(\Omega; \kappa \lambda),
\end{align}
holds for all $\lambda>0$ and $\varepsilon \in (0,\varepsilon_0)$,  with $\sigma = \varepsilon^{-\frac{n-\alpha \gamma}{n\gamma}}$ and $\kappa = \varepsilon c_{\varepsilon}^{-1}$.
\end{theorem}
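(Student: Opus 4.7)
The plan is to reduce inequality~\eqref{ineq:dG} to a direct application of the Vitali-type covering lemma~\ref{lem:Cover}. Concretely, I would set
$$\mathcal{P} := \mathcal{V}_{\alpha}(\mathcal{G}; \sigma\lambda) \cap \Omega, \qquad \mathcal{Q} := \bigl[\mathcal{V}_{\alpha}(\mathcal{G}; \lambda) \cup \mathcal{V}_{\alpha}(\mathcal{F}; \kappa\lambda)\bigr] \cap \Omega,$$
with $\sigma = \varepsilon^{-(n-\alpha\gamma)/(n\gamma)}$ and $\kappa = \varepsilon c_{\varepsilon}^{-1}$ as prescribed. Since $\alpha\gamma < n$ implies $\sigma > 1$, the monotonicity of super-level sets gives $\mathcal{P} \subset \mathcal{Q}$; and the covering lemma's conclusion $\mathcal{L}^n(\mathcal{P}) \le C\varepsilon \mathcal{L}^n(\mathcal{Q})$, combined with the subadditive bound $\mathcal{L}^n(\mathcal{Q}) \le d_{\mathcal{G}}^{\alpha}(\Omega;\lambda) + d_{\mathcal{F}}^{\alpha}(\Omega;\kappa\lambda)$, yields~\eqref{ineq:dG} once we absorb the prefactor $C\varepsilon$ in front of $d_{\mathcal{F}}^{\alpha}$ (valid for $\varepsilon_0$ small enough that $C\varepsilon \le 1$). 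So the real task is to verify the two hypotheses of Lemma~\ref{lem:Cover} for these $\mathcal{P}, \mathcal{Q}$, with some rescaled level $\tilde\varepsilon = C\varepsilon$ and $r = r_0$.

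For hypothesis~(i), first observe the trivial sub-case: if $\mathcal{V}_{\alpha}^{c}(\mathcal{F};\kappa\lambda) \cap \Omega = \emptyset$, then $d_{\mathcal{F}}^{\alpha}(\Omega;\kappa\lambda) = \mathcal{L}^n(\Omega) \ge d_{\mathcal{G}}^{\alpha}(\Omega;\sigma\lambda)$ already gives the theorem outright. Otherwise Lemma~\ref{lem:A1} delivers $\mathcal{L}^n(\mathcal{P}) \le C(\kappa/\sigma)^{n/(n-\alpha)}\mathrm{diam}(\Omega)^n$. A direct computation with the chosen parameters gives $\kappa/\sigma = c_{\varepsilon}^{-1}\varepsilon^{1+(n-\alpha\gamma)/(n\gamma)}$, and the resulting exponent of $\varepsilon$ in $(\kappa/\sigma)^{n/(n-\alpha)}$, namely $(n\gamma+n-\alpha\gamma)/((n-\alpha)\gamma)$, strictly exceeds $1$. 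Hence for $\varepsilon_0$ sufficiently small we obtain $\mathcal{L}^n(\mathcal{P}) \le \tilde\varepsilon\,\mathcal{L}^n(B_{r_0}(0))$, which is exactly condition~(i).

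Hypothesis~(ii) I would establish via its contrapositive. Suppose $\xi \in \Omega$ and $\varrho \in (0,r_0]$ satisfy $\Omega_{\varrho}(\xi) \not\subset \mathcal{Q}$; then there is a point $x_0 \in \Omega_{\varrho}(\xi)$ with $\mathbf{M}_{\alpha}\mathcal{G}(x_0) \le \lambda$ \emph{and} $\mathbf{M}_{\alpha}\mathcal{F}(x_0) \le \kappa\lambda$ simultaneously, which is precisely the double nonemptiness assumption~\eqref{asmp:lem-A3} of Lemma~\ref{lem:A3}. Provided $\varepsilon_0$ is also small enough that $\sigma > 3^n$, Lemma~\ref{lem:A3} then produces
$$\mathcal{L}^n(\mathcal{P} \cap B_{\varrho}(\xi)) = d_{\mathcal{G}}^{\alpha}(\Omega_{\varrho}(\xi);\sigma\lambda) \le C\,\sigma^{-n\gamma/(n-\alpha\gamma)}\varrho^n = C\varepsilon\,\varrho^n,$$
where the final identity is the whole point behind the choice $\sigma = \varepsilon^{-(n-\alpha\gamma)/(n\gamma)}$. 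Dividing by $\mathcal{L}^n(B_{\varrho}(\xi)) = c_n\varrho^n$ gives $\mathcal{L}^n(\mathcal{P}\cap B_{\varrho}(\xi)) \le \tilde\varepsilon\,\mathcal{L}^n(B_{\varrho}(\xi))$, which is the contrapositive of~(ii). Invoking Lemma~\ref{lem:Cover} then closes the proof.

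The main obstacle I anticipate is the joint calibration of $\varepsilon_0$: the threshold must simultaneously guarantee $\sigma > 3^n$ so that Lemma~\ref{lem:A3} applies, fit the Lemma~\ref{lem:A1} bound inside the size constraint of~(i), and absorb the universal constants produced by the covering lemma into the factor $C\varepsilon$ on the right-hand side of~\eqref{ineq:dG}. All three constraints are algebraic in $\varepsilon$ for fixed data $(n,\alpha,\gamma,\tilde{c},r_0,\mathrm{diam}(\Omega))$ together with the growth profile of $c_{\varepsilon}$, so such an $\varepsilon_0$ exists, but tracking every constant cleanly is where most of the bookkeeping lives.
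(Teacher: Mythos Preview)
Your proof is correct and follows essentially the same route as the paper: both invoke Lemma~\ref{lem:A1} for hypothesis~(i) of the covering lemma, Lemma~\ref{lem:A3} for hypothesis~(ii), and then Lemma~\ref{lem:Cover} to conclude. The only cosmetic difference is that the paper places the $\mathcal{F}$-constraint inside $\mathcal{P}$ (taking $\mathcal{P} = \mathcal{V}_{\alpha}(\mathcal{G};\sigma\lambda)\cap\mathcal{V}_{\alpha}^{c}(\mathcal{F};\kappa\lambda)\cap\Omega$ and $\mathcal{Q} = \mathcal{V}_{\alpha}(\mathcal{G};\lambda)\cap\Omega$) and then recovers the full level set via $\mathcal{V}_{\alpha}(\mathcal{G};\sigma\lambda)\cap\Omega \subset \mathcal{P}\cup(\mathcal{V}_{\alpha}(\mathcal{F};\kappa\lambda)\cap\Omega)$, whereas you put it inside $\mathcal{Q}$ and absorb the resulting $C\varepsilon$ prefactor on $d_{\mathcal{F}}^{\alpha}$; these are dual formulations of the same argument.
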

\begin{proof}
Firstly, we will prove the following inequality
\begin{align}\label{G-lambda:1}
\mathcal{L}^n  \left( \mathcal{V}_{\alpha}(\mathcal{G}; \sigma \lambda) \cap \mathcal{V}_{\alpha}^{c}(\mathcal{F}; \kappa \lambda) \cap \Omega  \right)  \le C \varepsilon  \mathcal{L}^n  \left( \mathcal{V}_{\alpha}(\mathcal{G}; \lambda) \cap \Omega  \right),
\end{align}
with $\sigma = \varepsilon^{-\frac{n-\alpha \gamma}{n\gamma}}$ and $\kappa = \varepsilon c_{\varepsilon}^{-1}$. In order to obtain this inequality, we apply Lemma~\ref{lem:Cover} for two subsets defined by 
$$ \mathcal{P} = \mathcal{V}_{\alpha}(\mathcal{G}; \sigma \lambda) \cap \mathcal{V}_{\alpha}^{c}(\mathcal{F}; \kappa \lambda) \cap \Omega \quad \mbox{and} \quad \mathcal{Q} = \mathcal{V}_{\alpha}(\mathcal{G}; \lambda) \cap \Omega.$$ 
The proof consists in the construction of assumptions $i)$ and $ii)$ in Lemma~\ref{lem:Cover}. We first remark that~\eqref{G-lambda:1} obviously holds if $\mathcal{P}$ is empty. Hence we only need to consider the otherwise case $\mathcal{P} \neq \emptyset$ which ensures $i)$ from inequality~\eqref{ineq:A1} in Lemma~\ref{lem:A1} as follows
\begin{align}\label{est:G-lambda-1}
\mathcal{L}^n \left(\mathcal{P}\right) \le d_{\mathcal{G}}^{\alpha}(\Omega; \sigma \lambda) \stackrel{\eqref{ineq:A1}}{\le} C \left(\sigma^{-1} \varepsilon  c_{\varepsilon} ^{-1}\right)^{\frac{n}{n-\alpha}} \mathcal{L}^n\left(B_{r}(0)\right) \le  \varepsilon \mathcal{L}^n\left(B_{r}(0)\right).
\end{align}
Next, assumption $ii)$ will be proved by contradiction. More precisely, assume that $\Omega_{\varrho}(\xi) \cap \mathcal{Q}^c \neq \emptyset$ with  $\xi \in \Omega$ and $\varrho \in (0,r]$, it suffices to show that $\mathcal{L}^n\left(\mathcal{P} \cap B_{\varrho}(\xi)\right) \le \varepsilon \mathcal{L}^n\left(B_{\varrho}(\xi)\right)$. Indeed, without loss of generality we may again assume $\mathcal{P} \cap B_{\varrho}(\xi) \neq \emptyset$. Thanks to inequality~\eqref{eq:iigoal} in Lemma~\ref{lem:A3}, one gets that
\begin{align}\label{est:G-lambda-2}
\mathcal{L}^n\left(\mathcal{P} \cap B_{\varrho}(\xi)\right) & \stackrel{\eqref{eq:iigoal}}{\le}  C \left[ \left( {\sigma}^{-1} \varepsilon  \right)^{\frac{n}{n-\alpha}}   +  \sigma^{-\frac{n \gamma}{n-\alpha \gamma}} \right] \mathcal{L}^n\left(B_{\varrho}(\xi)\right) \le \varepsilon \mathcal{L}^n\left(B_{\varrho}(\xi)\right).
\end{align}
Here we remark that $(\sigma^{-1} \varepsilon)^{\frac{n}{n-\alpha}} = \varepsilon^{1+\frac{n}{\gamma(n-\alpha)}}$ and $c_{\varepsilon} >1$ in~\eqref{est:G-lambda-1} and~\eqref{est:G-lambda-2}. Therefore these inequalities hold for $\varepsilon \in (0,\varepsilon_0)$ where $\varepsilon_0$ small enough such that 
$$C \varepsilon_0^{\frac{n}{\gamma(n-\alpha)}} < 1 \ \mbox{ and } \ \varepsilon_0^{-\frac{n-\alpha \gamma}{n\gamma}} > 3^{n}.$$ 
Finally, to complete the proof one may decompose as follows
$$\mathcal{V}_{\alpha}(\mathcal{G}; \sigma \lambda) \cap \Omega = \mathcal{P} \cup \left( \mathcal{V}_{\alpha} (\mathcal{F}; \kappa \lambda) \cap \Omega\right),$$ 
which guarantees~\eqref{ineq:dG} by taking into account~\eqref{G-lambda:1}.
\end{proof}

A similar inequality to~\eqref{eq:iigoal} in Lemma~\ref{lem:A3} will be proposed in the following lemma which corresponds to the local comparison~\ref{ing:A2_2}. With the new inequality~\eqref{eq:iigoal-A4}, we can improve the fractional-maximal distribution inequality into a better version in which the parameter $\sigma$ is no longer depending on the small number $\varepsilon$.

\begin{lemma}\label{lem:A4} 
Let $\alpha \in \left[0,n\right)$ and two functions $\mathcal{F}$, $\mathcal{G}$ satisfy
local comparison~\ref{ing:A2_2} and the global one~\ref{ing:A3}. Then for every $\varepsilon \in (0,1)$, one can find $\sigma_0 =\sigma_0(n,\tilde{c})>0$ and $\kappa = \kappa({\varepsilon}) \in (0,\varepsilon)$ such that if provided~\eqref{asmp:lem-A3} for some $\xi \in \Omega$ and $\varrho$,~$\lambda>0$, then the following inequality
\begin{align}\label{eq:iigoal-A4} 
d_{\mathcal{G}}^{\alpha}(\Omega_{\varrho}(\xi); \sigma \lambda)  \le C  \left(\frac{\varepsilon}{\sigma} \right)^{\frac{n}{n-\alpha}} \varrho^n
\end{align}
holds for all $\sigma > \sigma_0$, where  $C = C(n, \alpha, \tilde{c})>0$.
\end{lemma}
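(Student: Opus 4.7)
The plan is to mirror the proof of Lemma~\ref{lem:A3}, but to replace the Gehring/reverse-H\"older step on $\varphi$ by the stronger $L^{\infty}$-control supplied by ingredient~\ref{ing:A2_2}. That control is strong enough to make the $\varphi$-contribution to the level set \emph{empty} once $\sigma$ is taken sufficiently large, so only the $\psi$-piece survives. Since $\psi\in L^{1}$, it is handled by the $s=1$ weak-type inequality in Lemma~\ref{lem:M_alpha}, producing exactly the exponent $n/(n-\alpha)$ on the right-hand side of~\eqref{eq:iigoal-A4}. This is why the Gehring exponent $\gamma$ no longer enters the statement, why the admissible range widens to $\alpha\in[0,n)$, and why the threshold $\sigma_{0}$ may be chosen independently of $\varepsilon$.

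Concretely, I would first reproduce the geometric setup from the proof of Lemma~\ref{lem:A3}: pick $(R,\nu)$ with $R\in\{2\varrho,4\varrho\}$ and $\nu\in\overline{\Omega}$ such that $B_{2\varrho}(\xi)\subset B_{R}(\nu)$. For $\sigma>3^{n}$, Lemma~\ref{lem:A2} gives $d_{\mathcal{G}}^{\alpha}(\Omega_{\varrho}(\xi);\sigma\lambda)\le d_{\chi_{B_{R}(\nu)}\mathcal{G}}^{\alpha}(\Omega_{\varrho}(\xi);\sigma\lambda)$. Invoking~\ref{ing:A2_2} at radius $R$ produces $(\varphi,\psi)$ on $\Omega_{R}(\nu)$ with $(\mathcal{G},\varphi,\psi)\in\mathrm{Q}(B_{R}(\nu))$, so $\mathcal{G}\le\tilde{c}(\varphi+\psi)$ in $B_{R}(\nu)$ and
\begin{align*}
d_{\chi_{B_{R}(\nu)}\mathcal{G}}^{\alpha}(\Omega_{\varrho}(\xi);\sigma\lambda)\le d_{\chi_{B_{R}(\nu)}\varphi}^{\alpha}\!\bigl(\Omega_{\varrho}(\xi);\tfrac{\sigma\lambda}{2\tilde{c}}\bigr)+d_{\chi_{B_{R}(\nu)}\psi}^{\alpha}\!\bigl(\Omega_{\varrho}(\xi);\tfrac{\sigma\lambda}{2\tilde{c}}\bigr).
\end{align*}

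For the $\psi$-part I would apply Lemma~\ref{lem:M_alpha} with $s=1$ to $\chi_{B_{R}(\nu)}\psi$, obtaining a bound of order $(\sigma\lambda)^{-n/(n-\alpha)}\bigl(R^{n}\fint_{B_{R}(\nu)}\psi\bigr)^{n/(n-\alpha)}$. The mean-value control of $\psi$ transfers verbatim from Lemma~\ref{lem:A3}: the points $z_{1},z_{2}\in\Omega_{\varrho}(\xi)$ granted by~\eqref{asmp:lem-A3} yield $\fint_{B_{2R}(\nu)}\mathcal{G}\le 2^{n}R^{-\alpha}\lambda$ and $\fint_{B_{2R}(\nu)}\mathcal{F}\le 2^{n}R^{-\alpha}\kappa\lambda$, so~\eqref{eq:LC} gives $\fint_{B_{R}(\nu)}\psi\le C(\varepsilon+c_{\varepsilon}\kappa)R^{-\alpha}\lambda$. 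Taking $\kappa=\varepsilon c_{\varepsilon}^{-1}\in(0,\varepsilon)$ reduces this factor to $C\varepsilon$, and the $\psi$-contribution collapses to $C(\varepsilon/\sigma)^{n/(n-\alpha)}R^{n}\lesssim C(\varepsilon/\sigma)^{n/(n-\alpha)}\varrho^{n}$, which is exactly the right-hand side of~\eqref{eq:iigoal-A4}.

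The qualitatively new step is the $\varphi$-part, and I expect the only real delicacy to lie there. Combining the $L^{\infty}$-bound~\eqref{eq:Linf} with the two mean-value estimates just obtained gives $\|\varphi\|_{L^{\infty}(B_{R}(\nu))}\le CR^{-\alpha}\lambda$ for some $C=C(n,\tilde{c})$. For any $x\in\mathbb{R}^{n}$ and $r>0$ one then splits into $r\le R$ and $r>R$, using $|B_{r}(x)\cap B_{R}(\nu)|\le\min\{|B_{r}|,|B_{R}|\}$ together with $\alpha<n$ in the large-$r$ regime, to obtain the pointwise bound $\mathbf{M}_{\alpha}(\chi_{B_{R}(\nu)}\varphi)(x)\le R^{\alpha}\|\varphi\|_{L^{\infty}(B_{R}(\nu))}\le C_{\star}\lambda$, uniformly in $x$, with $C_{\star}=C_{\star}(n,\tilde{c})$. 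Choosing $\sigma_{0}:=\max\{3^{n},\,2\tilde{c}C_{\star}\}$ then forces $\sigma\lambda/(2\tilde{c})>C_{\star}\lambda$ for all $\sigma>\sigma_{0}$, so the $\varphi$-level set is empty and contributes zero, yielding~\eqref{eq:iigoal-A4} with a $\sigma_{0}$ that depends only on $n$ and $\tilde{c}$, as claimed.
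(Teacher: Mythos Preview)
Your proof is correct and follows essentially the same route as the paper's: reduce to the localized function via Lemma~\ref{lem:A2}, split off $\varphi$ and $\psi$ by the quasi-triangle inequality, kill the $\varphi$-level set using the $L^{\infty}$-bound~\eqref{eq:Linf} once $\sigma$ exceeds a threshold depending only on $(n,\tilde{c})$, and control the $\psi$-piece by the $s=1$ case of Lemma~\ref{lem:M_alpha} combined with~\eqref{eq:LC} and the choice $\kappa=\varepsilon c_{\varepsilon}^{-1}$. The only (minor) difference is that the paper invokes the sharper identity~\eqref{eq:M-rho} from the proof of Lemma~\ref{lem:A2}, working with the cutoff operator $\mathbf{M}_{\alpha}^{\varrho}$, so the pointwise bound on $\mathbf{M}_{\alpha}^{\varrho}(\chi_{B_{R}(\nu)}\varphi)$ is immediate without a separate $r>R$ case; your treatment of that regime via $\alpha<n$ is an equally valid alternative.
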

\begin{proof}
As in the proof of Lemma~\ref{lem:A3}, we may chose $R = 2\varrho$ and $\nu = \xi$ if $B_{2\varrho}(\xi) \subset \Omega$ and otherwise, $R = 4\varrho$ and $\nu \in \partial \Omega$ such that $|\xi- \nu| = \mathrm{dist}(x,\partial \Omega) \le 2 \varrho$. Under assumption~\eqref{asmp:lem-A3}, one can find $z_1, z_2 \in \Omega_{\varrho}(\xi)$ such that ${\mathbf{M}}_{\alpha}{\mathcal{G}}(z_1) \le \lambda$ and ${\mathbf{M}}_{\alpha}{\mathcal{F}}(z_2) \le \kappa  \lambda$. Combining with the fact $B_{2R}(\nu) \subset  B_{4R}(z_1) \cap B_{4R}(z_2)$ to get that
\begin{align}\label{eq:com-2a-A4}
\fint_{B_{2R}(\nu)}{\mathcal{G}(x)dx} & \le   2^n  R^{-\alpha} \lambda \ \mbox{ and } \fint_{B_{2R}(\nu)}{\mathcal{F}(x)dx}  \le 2^n R^{-\alpha} \kappa  \lambda,
\end{align}
For every $\zeta \in \Omega_{\varrho}(\xi)$, one notes that $B_{r}(\zeta) \subset B_{r+\varrho}(\xi) \subset B_{2\varrho}(\xi) \subset B_{R}(\nu)$ for all $r \in (0,\varrho)$. The local comparison~\eqref{eq:Linf} and estimate~\eqref{eq:com-2a-A4} give us
\begin{align*}
\mathbf{M}_{\alpha}^{\varrho}\left(\chi_{B_R(\nu)}\varphi\right)(\zeta) \le R^{\alpha}  \sup_{x \in B_{R}(\nu)} \varphi(x) \le 2^n \hat{c} \left(1 + \kappa \right) \lambda \le 2^{n+1} \hat{c} \lambda,
\end{align*}
which yields that if $\tilde{c}^{-1} \sigma > 2^{n+1} \hat{c}$ then
\begin{align}\label{est:M-A4}
\mathcal{L}^n \left(\left\{\zeta \in \Omega_{\varrho}(\xi): \ \mathbf{M}_{\alpha}^{\varrho}\left(\chi_{B_R(\nu)}\varphi\right)(\zeta) > \tilde{c}^{-1} \sigma \lambda\right\} \right) = 0.
\end{align}
We note that the parameter $\kappa$ will be choose smaller than 1 later. Moreover, thanks to~\eqref{eq:M-rho} in the proof of Lemma~\ref{lem:A2} with $\sigma > 3^n$ and assumption $\mathcal{G} \le \tilde{c} (\varphi + \psi)$ in~\eqref{cond:phi-psi}, one has
\begin{align*}
d_{\mathcal{G}}^{\alpha}(\Omega_{\varrho}(\xi); \sigma \lambda) &\le  \mathcal{L}^n \left(\left\{\zeta \in \Omega_{\varrho}(\xi): \ \mathbf{M}_{\alpha}^{\varrho}\left(\chi_{B_R(\nu)}\mathcal{G} \right)(\zeta) >  \sigma \lambda\right\} \right) \\
& \le \mathcal{L}^n \left(\left\{\zeta \in \Omega_{\varrho}(\xi): \ \mathbf{M}_{\alpha}^{\varrho}\left(\chi_{B_R(\nu)}\varphi\right)(\zeta) > \tilde{c}^{-1} \sigma \lambda\right\} \right) \\
& \qquad  + \mathcal{L}^n \left(\left\{\zeta \in \Omega_{\varrho}(\xi): \ \mathbf{M}_{\alpha}^{\varrho}\left(\chi_{B_R(\nu)}\psi\right)(\zeta) > \tilde{c}^{-1} \sigma \lambda\right\} \right),
\end{align*}
which deduces from~\eqref{est:M-A4} that
\begin{align}\label{est:M-A4-b}
d_{\mathcal{G}}^{\alpha}(\Omega_{\varrho}(\xi); \sigma \lambda) \le d_{\chi_{B_R(\nu)}\psi}^{\alpha}(\Omega_{\varrho}(\xi); \tilde{c}^{-1} \sigma \lambda),
\end{align}
for all $\sigma > \sigma_0 = \max\left\{3^{n}, 2^{n+1}\tilde{c}\hat{c} \right\}$. Applying Lemma~\ref{lem:M_alpha} with $s = 1$ for the right-hand side of~\eqref{est:M-A4-b}, one gets that
\begin{align}\label{est:M-A4-c}
d_{\mathcal{G}}^{\alpha}(\Omega_{\varrho}(\xi); \sigma \lambda)  \le C \left(\frac{\tilde{c} R^n}{\sigma \lambda}  \fint_{B_R(\nu)} \psi(x) dx \right)^{\frac{n}{n-\alpha}}.
\end{align}
The integral term on the right-hand side of~\eqref{est:M-A4-c} can be estimated by using the local comparison~\eqref{eq:LC} and~\eqref{eq:com-2a-A4} as follows
\begin{align}\label{est:psi-A4}
\fint_{B_{R}(\nu)}{\psi(x) dx} & \le 2^{n} (\varepsilon + c_{\varepsilon} \kappa)  R^{-\alpha} \lambda \le 2^{n+1} \varepsilon R^{-\alpha} \lambda,
\end{align}
by choosing $\kappa = \varepsilon c_{\varepsilon}^{-1} \in (0, \varepsilon)$. Finally we may conclude~\eqref{eq:iigoal-A4} by collecting~\eqref{est:M-A4-c} and~\eqref{est:psi-A4}.
\end{proof}

\begin{theorem}\label{theo:G-lam-B}
Let $\alpha \in [0,n)$ and two functions $\mathcal{F}$, $\mathcal{G} \in L^1(\Omega; \mathbb{R}^+)$ satisfy local comparison~\ref{ing:A2_2} and the global one \ref{ing:A3}. Then for all $\lambda>0$ and $\varepsilon \in (0,1)$ there exists $\sigma_0 = \sigma_0(n,\tilde{c})>0$ and $\kappa  \in (0,\varepsilon)$ such that  
\begin{align}\label{ineq:dG-2}
d_{\mathcal{G}}^{\alpha}(\Omega; \sigma_0 \lambda) \le C \varepsilon d_{\mathcal{G}}^{\alpha}(\Omega; \lambda)  + d_{\mathcal{F}}^{\alpha}(\Omega; \kappa \lambda).
\end{align}
\end{theorem}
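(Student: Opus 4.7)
The plan is to follow the same architecture as the proof of Theorem~\ref{theo:G-lam-A}, namely apply the Calder\'on--Zygmund type covering Lemma~\ref{lem:Cover} to the pair
\[
\mathcal{P} := \mathcal{V}_{\alpha}(\mathcal{G};\sigma_0\lambda)\cap \mathcal{V}_{\alpha}^{c}(\mathcal{F};\kappa\lambda)\cap \Omega,
\qquad
\mathcal{Q} := \mathcal{V}_{\alpha}(\mathcal{G};\lambda)\cap \Omega,
\]
and then decompose $\mathcal{V}_{\alpha}(\mathcal{G};\sigma_0\lambda)\cap\Omega = \mathcal{P}\cup\bigl(\mathcal{V}_{\alpha}(\mathcal{F};\kappa\lambda)\cap\Omega\bigr)$ to read off~\eqref{ineq:dG-2}. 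The decisive structural change compared to Theorem~\ref{theo:G-lam-A} is that instead of invoking Lemma~\ref{lem:A3}, I will invoke Lemma~\ref{lem:A4}, whose conclusion~\eqref{eq:iigoal-A4} carries an extra factor of $\varepsilon^{n/(n-\alpha)}$ that is already genuinely small as $\varepsilon\to 0$. This is precisely what lets the threshold $\sigma_0$ be frozen (depending only on $n$ and $\tilde c$) rather than being driven to $\infty$ by $\varepsilon$.

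First, I will reduce to the case $\mathcal{P}\neq\emptyset$, since~\eqref{ineq:dG-2} is otherwise trivial. Then I set $\kappa = \varepsilon c_{\varepsilon}^{-1}\in(0,\varepsilon)$ exactly as in the proof of Lemma~\ref{lem:A4}, pick $r=r_0$, and use Lemma~\ref{lem:A1} together with the global comparison~\ref{ing:A3} to get
\[
\mathcal{L}^{n}(\mathcal{P}) \le d_{\mathcal{G}}^{\alpha}(\Omega;\sigma_0\lambda) \le C\bigl(\kappa/\sigma_0\bigr)^{\frac{n}{n-\alpha}}\mathrm{diam}(\Omega)^{n}.
\]
Choosing $\sigma_0=\sigma_0(n,\tilde c)$ sufficiently large and absorbing geometric constants yields hypothesis~$i)$ of Lemma~\ref{lem:Cover}, namely $\mathcal{L}^{n}(\mathcal{P})\le \varepsilon\mathcal{L}^{n}(B_{r_0}(0))$; notice $\kappa<\varepsilon$ is used crucially here.

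For hypothesis~$ii)$, I argue by contrapositive. Fix $\xi\in\Omega$ and $\varrho\in(0,r_0]$ and suppose $\Omega_{\varrho}(\xi)\cap \mathcal{Q}^{c}\neq\emptyset$, i.e.\ $\mathcal{V}_{\alpha}^{c}(\mathcal{G};\lambda)\cap \Omega_{\varrho}(\xi)\neq\emptyset$. If $\mathcal{P}\cap B_{\varrho}(\xi)=\emptyset$ the conclusion is trivial; otherwise also $\mathcal{V}_{\alpha}^{c}(\mathcal{F};\kappa\lambda)\cap \Omega_{\varrho}(\xi)\neq\emptyset$, so the hypothesis~\eqref{asmp:lem-A3} of Lemma~\ref{lem:A4} is met. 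Applying that lemma gives
\[
\mathcal{L}^{n}\bigl(\mathcal{P}\cap B_{\varrho}(\xi)\bigr)
 \le d_{\mathcal{G}}^{\alpha}\bigl(\Omega_{\varrho}(\xi);\sigma_0\lambda\bigr)
 \le C\bigl(\varepsilon/\sigma_0\bigr)^{\frac{n}{n-\alpha}}\varrho^{n}.
\]
Since $\varepsilon\in(0,1)$ and $n/(n-\alpha)\ge 1$, one has $\varepsilon^{n/(n-\alpha)}\le \varepsilon$, hence by enlarging $\sigma_0=\sigma_0(n,\tilde c)$ once and for all (absorbing the constant $C$ and the volume of the unit ball) we obtain $\mathcal{L}^{n}(\mathcal{P}\cap B_{\varrho}(\xi))\le \varepsilon\mathcal{L}^{n}(B_{\varrho}(\xi))$. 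Lemma~\ref{lem:Cover} then yields $\mathcal{L}^{n}(\mathcal{P})\le C\varepsilon\mathcal{L}^{n}(\mathcal{Q})$; inserting this into the decomposition of $\mathcal{V}_{\alpha}(\mathcal{G};\sigma_0\lambda)\cap\Omega$ delivers~\eqref{ineq:dG-2}.

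The main obstacle, though minor, is bookkeeping: one must verify that $\sigma_0$ can be chosen independently of $\varepsilon$ and that both the ``first smallness'' bound from Lemma~\ref{lem:A1} and the ``local'' bound from Lemma~\ref{lem:A4} accept the \emph{same} $\sigma_0$. Both are achievable because in the former $\kappa=\varepsilon c_{\varepsilon}^{-1}\le\varepsilon$ provides the needed slack, while in the latter the extra power $\varepsilon^{n/(n-\alpha)}\le\varepsilon$ does. All remaining constants depend only on $n,\alpha,\tilde c$, as required.
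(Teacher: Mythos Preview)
Your proposal is correct and follows exactly the approach the paper intends: the paper's own proof is a one-sentence sketch saying to repeat the argument of Theorem~\ref{theo:G-lam-A} via the covering Lemma~\ref{lem:Cover}, replacing the invocation of Lemma~\ref{lem:A3} by Lemma~\ref{lem:A4} (together with Lemma~\ref{lem:A1}), and you have carried this out in full, correctly isolating why the extra factor $\varepsilon^{n/(n-\alpha)}$ in~\eqref{eq:iigoal-A4} allows $\sigma_0$ to be frozen independently of $\varepsilon$. The only caveat is that when you ``enlarge $\sigma_0$'' to absorb the constant $C=C(n,\alpha,\tilde c)$ from Lemma~\ref{lem:A4} and the geometric ratio $\mathrm{diam}(\Omega)^n/r_0^n$ from Lemma~\ref{lem:A1}, the resulting $\sigma_0$ picks up a dependence on $\alpha$ and on $\mathrm{diam}(\Omega)/r_0$; this mild imprecision is already present in the paper's own statement and proof, so it is not a defect of your argument.
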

\begin{proof}
Inequality~\eqref{ineq:dG-2} can be proved by the same method as in the proof of Theorem~\ref{theo:G-lam-A} which is based on the covering Lemma~\ref{lem:Cover}, the only difference being in the applications of Lemmas~\ref{lem:A1} and~\ref{lem:A4}.
\end{proof}

\section{Bringing the norm back: Abstract results}\label{sec:abstract}
This section aims at offering some abstract results related comparisons for some general spaces, corresponding to the proof of Theorem \ref{theo-B}. It enables us to use recent FMD inequalities proved in Section~\ref{sec:ingredient} to prove certain norm inequalities in the setting of Lorentz, Orlicz spaces and Orlicz-Lorentz spaces, respectively.

In order to prove Theorem \ref{theo-B}, we give separate proofs for simplicity, through Theorem~\ref{theo:norm-L-1},~\ref{theo:norm-L-2}; Theorem~\ref{theo:Orlicz} and~\ref{theo:L-O} as follow.

\subsection{In Lorentz spaces}\label{sec:Lorentz_app}
It is well-known that the Lorentz spaces can be defined by using distribution function.
\begin{definition}[Lorentz spaces]\label{def:Lorentz}
Let $0<q<\infty$ and $0 <s \le \infty$. Lorentz space $L^{q,s}(\Omega)$ is the set of all Lebesgue measurable function $f$ on $\Omega$ such that $\|f\|_{L^{q,s}(\Omega)} < \infty$, where the $\|\cdot\|_{L^{q,s}(\Omega)}$ is the quasi-norm defined by
\begin{align*}
\|f\|_{L^{q,s}(\Omega)} :=  \left[ q \int_0^\infty{\left[\lambda^q d_f(\Omega;\lambda)\right]^{\frac{s}{q}} \frac{d\lambda}{\lambda}} \right]^{\frac{1}{s}}, 
\end{align*}
if $0< s < \infty$ and otherwise
\begin{align*}
\|f\|_{L^{q,\infty}(\Omega)} := \sup_{\lambda>0}{\left[\lambda^q d_f(\Omega;\lambda)\right]^{\frac{1}{q}}}.
\end{align*}
\end{definition}
\begin{remark}
When $q=s$, the Lorentz space $L^{q,q}(\Omega)$ coincides the classical Lebesgue space $L^q(\Omega)$.  Moreover, for $1<r<q<\infty$, one has the following relations
\begin{align*}
L^{q,1}(\Omega) \subset L^{q,q}(\Omega) = L^q(\Omega) \subset L^{q,\infty}(\Omega) \subset L^{r,1}(\Omega),
\end{align*}
and we refer to~\cite{Gra97} for further reading.
\end{remark}
\begin{theorem}[Lorentz norm estimate under~\ref{ing:A2_1}]\label{theo:norm-L-1}
Let $\gamma>1$, $\alpha \in [0,\frac{n}{\gamma})$ and two functions $\mathcal{F}$, $\mathcal{G} \in L^1(\Omega; \mathbb{R}^+)$ satisfy~\ref{ing:A2_1} and~\ref{ing:A3}. Then for every $0< q < \frac{n\gamma}{n-\alpha\gamma}$ and $0<s \le \infty$ there holds
\begin{align*}
\mathbf{M}_{\alpha}\mathcal{F} \in L^{q,s}(\Omega)\Longrightarrow \mathbf{M}_{\alpha}\mathcal{G} \in L^{q,s}(\Omega).
\end{align*}
More precisely, there exists a positive constant $C = C(n,\gamma,\alpha, q, s)$ such that 
\begin{align}\label{eq:main-A}
\|\mathbf{M}_{\alpha}\mathcal{G}\|_{L^{q,s}(\Omega)} & \le  C  \|\mathbf{M}_{\alpha}\mathcal{F}\|_{L^{q,s}(\Omega)}.
\end{align}
\end{theorem}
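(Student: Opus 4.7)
My plan is to convert the fractional-maximal distribution inequality of Theorem~\ref{theo:G-lam-A} into the claimed Lorentz bound by feeding it directly into the distribution-function definition of the Lorentz quasi-norm (Definition~\ref{def:Lorentz}). For $0<s<\infty$ I would start from
\[
\|\mathbf{M}_\alpha\mathcal{G}\|_{L^{q,s}(\Omega)}^{s}= q\int_0^\infty\bigl[\lambda^q d_\mathcal{G}^\alpha(\Omega;\lambda)\bigr]^{s/q}\frac{d\lambda}{\lambda},
\]
perform the substitution $\lambda\mapsto\sigma\lambda$ (which pulls out a clean factor $\sigma^{s}$ while preserving $d\lambda/\lambda$), and then apply Theorem~\ref{theo:G-lam-A} with $\sigma=\varepsilon^{-(n-\alpha\gamma)/(n\gamma)}$ and $\kappa=\varepsilon c_\varepsilon^{-1}$. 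Using the quasi-subadditivity $(a+b)^{s/q}\le C_{s,q}(a^{s/q}+b^{s/q})$ to split the integrand, and undoing the scaling $\mu=\kappa\lambda$ in the $\mathcal{F}$-term, would produce an inequality of the shape
\[
\|\mathbf{M}_\alpha\mathcal{G}\|_{L^{q,s}(\Omega)}^{s}\le C\,\sigma^{s}\varepsilon^{s/q}\,\|\mathbf{M}_\alpha\mathcal{G}\|_{L^{q,s}(\Omega)}^{s}+C\,\sigma^{s}\kappa^{-s}\,\|\mathbf{M}_\alpha\mathcal{F}\|_{L^{q,s}(\Omega)}^{s}.
\]

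The critical book-keeping is the exponent of $\varepsilon$ in the first coefficient: with the prescribed $\sigma$ one computes $\sigma^{s}\varepsilon^{s/q}=\varepsilon^{\,s\bigl(1/q-(n-\alpha\gamma)/(n\gamma)\bigr)}$, which is a \emph{positive} power of $\varepsilon$ precisely when $q<n\gamma/(n-\alpha\gamma)$, i.e.\ in the exact range asserted. Choosing $\varepsilon\in(0,\varepsilon_0)$ small enough that this coefficient is at most $1/2$ lets me absorb the $\mathcal{G}$-norm to the left and obtain~\eqref{eq:main-A} with a constant depending on $n,\gamma,\alpha,q,s$ through $\varepsilon$, $\sigma$ and $c_\varepsilon$. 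The case $s=\infty$ runs identically after replacing $\int_0^\infty\cdot\,d\lambda/\lambda$ by $\sup_\lambda$; the same algebra on the exponent enables the same absorption.

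The main obstacle is the familiar a priori finiteness issue hidden in the absorption step: one is not entitled to move $\|\mathbf{M}_\alpha\mathcal{G}\|_{L^{q,s}(\Omega)}^{s}$ across the inequality without knowing it is already finite. I would handle this by truncation, replacing the whole integral by
\[
I_M:=q\int_0^M\bigl[\lambda^q d_\mathcal{G}^\alpha(\Omega;\lambda)\bigr]^{s/q}\frac{d\lambda}{\lambda},
\]
which is automatically finite because $\Omega$ is bounded (so $d_\mathcal{G}^\alpha(\Omega;\lambda)\le|\Omega|$ and the integrand is bounded on the compact interval $(0,M)$). Since $\sigma\ge 1$ for $\varepsilon\le 1$, the rescaling respects the truncation, namely $M/\sigma\le M$, so the same absorption argument produces $I_M\le C\|\mathbf{M}_\alpha\mathcal{F}\|_{L^{q,s}(\Omega)}^{s}$ with a constant independent of $M$; letting $M\to\infty$ by monotone convergence closes the proof.
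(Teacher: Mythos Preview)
Your argument is correct and follows essentially the same route as the paper's proof: rescale $\lambda\mapsto\sigma\lambda$ in the Lorentz integral, apply Theorem~\ref{theo:G-lam-A}, compute $\sigma^{s}\varepsilon^{s/q}=\varepsilon^{s(1/q-(n-\alpha\gamma)/(n\gamma))}$, and absorb by taking $\varepsilon$ small. The only difference is that you add a truncation step to justify the a~priori finiteness needed for absorption, which the paper omits; one small wording slip---the integrand in $I_M$ need not be \emph{bounded} near $0$ when $s<1$---is harmless since $\lambda^{s-1}$ is still integrable there, so $I_M<\infty$ as you claim.
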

\begin{proof}
According to Theorem~\ref{theo:G-lam-A}, one can find $\varepsilon_0>0$ small enough such that for all $\varepsilon \in (0,\varepsilon_0)$ and $\lambda>0$, there holds
\begin{align}\label{est:A-1}
d_{\mathcal{G}}^{\alpha}(\Omega; \sigma \lambda)   \le C \varepsilon d_{\mathcal{G}}^{\alpha}(\Omega; \lambda)  + d_{\mathcal{F}}^{\alpha}(\Omega; \kappa \lambda),
\end{align}
where $\sigma = \varepsilon^{-\frac{n-\alpha \gamma}{n\gamma}}$ and $\kappa = \varepsilon c_{\varepsilon}^{-1}$. By changing of variable $\lambda$ to $\delta \lambda$, the norm of $\mathbf{M}_{\alpha}\mathcal{G}$ in Lorentz space $L^{q,s}(\Omega)$ can be rewritten as
\begin{align}\label{est:A-2}
\|\mathbf{M}_{\alpha}\mathcal{G}\|^s_{L^{q,s}(\Omega)}  = \delta^s q\int_0^\infty{ \left[\lambda^q d_{\mathcal{G}}^{\alpha}(\Omega; \delta \lambda)\right]^{\frac{s}{q}}\frac{d\lambda}{\lambda}}, \quad \forall \delta>0.
\end{align}
Thanks to~\eqref{est:A-2} and~\eqref{est:A-1}, one obtains that
\begin{align}\nonumber
\|\mathbf{M}_{\alpha}\mathcal{G}\|^s_{L^{q,s}(\Omega)}  & \stackrel{\eqref{est:A-2}}{=}  \sigma^{s}  q\int_0^\infty{\left[\lambda^q d_{\mathcal{G}}^{\alpha}(\Omega; \sigma \lambda)\right]^{\frac{s}{q}}\frac{d\lambda}{\lambda}}\\ \nonumber
& \stackrel{\eqref{est:A-1}}{\le}  C \sigma^{s} \varepsilon^{\frac{s}{q}} q\int_0^\infty{\left[\lambda^q d_{\mathcal{G}}^{\alpha}(\Omega; \lambda)\right]^{\frac{s}{q}}\frac{d\lambda}{\lambda}} + C \sigma^{s} q\int_0^\infty{\left[\lambda^q d_{\mathcal{F}}^{\alpha}(\Omega; \kappa \lambda)\right]^{\frac{s}{q}}\frac{d\lambda}{\lambda}} \\ \label{est:5b}
& \stackrel{\eqref{est:A-2}}{=}  C \sigma^{s} \varepsilon^{\frac{s}{q}} \|\mathbf{M}_{\alpha}\mathcal{G}\|^s_{L^{q,s}(\Omega)} + C  \sigma^{s}  \kappa^{-s} \|\mathbf{M}_{\alpha}\mathcal{F}\|^s_{L^{q,s}(\Omega)}.
\end{align}
For every $0<s<\infty$ and $0<q<\frac{n\gamma}{n-\alpha\gamma}$, we may choose $\varepsilon \in (0,\varepsilon_0)$ in~\eqref{est:5b} satisfying
\begin{align*}
C \sigma^{s} \varepsilon^{\frac{s}{q}} = C \varepsilon^{s\left(\frac{1}{q}-\frac{n-\alpha\gamma}{n\gamma}\right)} \le \frac{1}{2},
\end{align*}
to obtain~\eqref{eq:main-A}. The same conclusion can be drawn for the case $s = \infty$. 
\end{proof}

\begin{theorem}[Lorentz norm estimate under~\ref{ing:A2_2}]\label{theo:norm-L-2}
Let $\alpha \in [0,n)$ and two functions $\mathcal{F}$, $\mathcal{G} \in L^1(\Omega; \mathbb{R}^+)$ satisfy~\ref{ing:A2_2} and~\ref{ing:A3}. Then for all $0< q < \infty$ and $0<s \le \infty$, there exists a positive $C = C(n,\gamma,\alpha,q,s)$ such that the following norm estimate
\begin{align}\label{eq:main-B}
\|\mathbf{M}_{\alpha}\mathcal{G}\|_{L^{q,s}(\Omega)} & \le  C  \|\mathbf{M}_{\alpha}\mathcal{F}\|_{L^{q,s}(\Omega)}.
\end{align}
\end{theorem}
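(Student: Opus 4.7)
The plan is to imitate the proof of Theorem~\ref{theo:norm-L-1} with one crucial substitution: in place of~\eqref{ineq:dG} I will invoke the improved fractional-maximal distribution inequality~\eqref{ineq:dG-2} supplied by Theorem~\ref{theo:G-lam-B}. The decisive structural difference is that, under~\ref{ing:A2_2}, the dilation constant $\sigma_0$ depends only on $n$ and $\tilde{c}$ and \emph{not} on $\varepsilon$. This decoupling is exactly what removes the upper restriction $q<\frac{n\gamma}{n-\alpha\gamma}$ from Theorem~\ref{theo:norm-L-1} and pushes the conclusion to the full range $0<q<\infty$.

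For $0<s<\infty$, after the change of variable $\lambda\mapsto\sigma_0\lambda$ in the Lorentz representation I obtain
$$\|\mathbf{M}_{\alpha}\mathcal{G}\|_{L^{q,s}(\Omega)}^{s} = \sigma_0^s\, q \int_0^\infty \left[\lambda^q d_{\mathcal{G}}^{\alpha}(\Omega;\sigma_0\lambda)\right]^{s/q}\frac{d\lambda}{\lambda}.$$
Inserting~\eqref{ineq:dG-2}, applying the elementary inequality $(a+b)^{s/q}\lesssim a^{s/q}+b^{s/q}$, and reverting the change of variable in each of the two resulting integrals yields
$$\|\mathbf{M}_{\alpha}\mathcal{G}\|_{L^{q,s}(\Omega)}^{s} \le C\,\sigma_0^s\,\varepsilon^{s/q}\,\|\mathbf{M}_{\alpha}\mathcal{G}\|_{L^{q,s}(\Omega)}^{s} + C\,\sigma_0^s\,\kappa^{-s}\,\|\mathbf{M}_{\alpha}\mathcal{F}\|_{L^{q,s}(\Omega)}^{s}.$$
Since $\sigma_0$ is a fixed number, for every prescribed $q,s\in(0,\infty)$ I may choose $\varepsilon\in(0,1)$ so small that $C\sigma_0^s\varepsilon^{s/q}\le 1/2$, fix the corresponding $\kappa=\kappa(\varepsilon)\in(0,\varepsilon)$ guaranteed by Theorem~\ref{theo:G-lam-B}, and absorb the first right-hand term into the left to reach~\eqref{eq:main-B}. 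The case $s=\infty$ is handled by the same recipe with $\sup_{\lambda>0}$ replacing $\int_0^\infty\cdots\frac{d\lambda}{\lambda}$, ending in the smallness condition $C\sigma_0^q\varepsilon\le 1/2$.

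The main obstacle I expect is the \emph{absorption step}, which is legitimate only when $\|\mathbf{M}_{\alpha}\mathcal{G}\|_{L^{q,s}(\Omega)}<\infty$; for arbitrary large $q$ this is not automatic from $\mathcal{G}\in L^1(\Omega)$. I would sidestep this by a standard truncation: replace the outer integration range $(0,\infty)$ by $(0,\Lambda)$, observe that each truncated integral is finite because $d_{\mathcal{G}}^{\alpha}(\Omega;\cdot)\le \mathcal{L}^n(\Omega)$ is bounded, run the absorption with constants independent of $\Lambda$, and then let $\Lambda\to\infty$ by monotone convergence. With that technical point handled, the proof is structurally identical to that of Theorem~\ref{theo:norm-L-1}, the $\varepsilon$-dependent $\sigma$ being merely replaced by the $\varepsilon$-free $\sigma_0$.
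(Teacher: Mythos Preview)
Your proposal is correct and follows essentially the same route as the paper: invoke Theorem~\ref{theo:G-lam-B} in place of Theorem~\ref{theo:G-lam-A}, run the identical change of variable and arrive at the same inequality $\|\mathbf{M}_{\alpha}\mathcal{G}\|^s_{L^{q,s}(\Omega)}  \le C \sigma_0^{s} \varepsilon^{s/q} \|\mathbf{M}_{\alpha}\mathcal{G}\|^s_{L^{q,s}(\Omega)} + C  \sigma_0^{s}  \kappa^{-s} \|\mathbf{M}_{\alpha}\mathcal{F}\|^s_{L^{q,s}(\Omega)}$, then choose $\varepsilon$ so that $C\sigma_0^s\varepsilon^{s/q}\le 1/2$. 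Your additional remark about justifying the absorption via truncation in $\lambda$ is a legitimate technical point that the paper simply omits; it does not change the approach.
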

\begin{proof}
Let us perform the same technique as in the proof of Theorem~\ref{theo:norm-L-2} using inequality~\eqref{ineq:dG-2}, we obtain that
\begin{align}\nonumber
\|\mathbf{M}_{\alpha}\mathcal{G}\|^s_{L^{q,s}(\Omega)}  \le C \sigma_0^{s} \varepsilon^{\frac{s}{q}} \|\mathbf{M}_{\alpha}\mathcal{G}\|^s_{L^{q,s}(\Omega)} + C  \sigma_0^{s}  \kappa^{-s} \|\mathbf{M}_{\alpha}\mathcal{F}\|^s_{L^{q,s}(\Omega)}.
\end{align}
Therefore we may choose $\varepsilon \in (0,1)$ in this inequality such that $C \sigma_0^{s} \varepsilon^{\frac{s}{q}}  \le \frac{1}{2}$ to get~\eqref{eq:main-B} for any $0<s<\infty$ and $0<q<\infty$. We also obtain the same result for the case $s = \infty$. 
\end{proof}

\subsection{In Orlicz spaces}\label{sec:Orlicz_app}
The study of Orlicz norm estimate in this section is of our independent interest. In this section we will make theory of FMD also be available in the framework of Orlicz spaces. Let us first briefly recall the definitions as well as some basic results concerning to this space, which are required to prove if necessary.

\begin{definition}[Young function]\label{def:youngfunc}
Let $\Phi$ be a non-negative, increasing and convex real-valued function on $[0,\infty)$. We say that $\Phi$ is the Young function if 
\begin{align}\label{eq:limit_young}
\lim_{\mu \to 0^+}{\frac{\Phi(\mu)}{\mu}} = 0, \lim_{\mu \to \infty}{\frac{\Phi(\mu)}{\mu}} = \infty.
\end{align}
\end{definition}

\begin{definition}[$\Delta_2$ condition]
The Young function $\Phi$ is said to satisfy the global $\Delta_2$ condition, denoted by $\Phi \in \Delta_2$ if there exists $\tau_1 \ge 2$ such that
\begin{align*}
\Phi(2\mu) \le \tau_1\Phi(\mu), \ \ \text{for all} \ \mu\ge 0.
\end{align*}
\end{definition}
We notice that the $\Delta_2$ condition is equivalent to the fact that for every $a>1$, there exists $\tau_1(a)>0$ such that $\Phi(a\mu) \le \tau_1(a) \Phi(\mu)$ for all $\mu \ge 0$. This fact is stated in the following lemma. We can refer to \cite[Lemma 2.2.7]{Hasto}, the work was done by H{\"a}st{\"o}.

\begin{lemma}\label{lem:Delta2_pro2}
Let $\Phi$ be a Young function. Then, $\Phi \in \Delta_2$ if and only if there exist two constants $K_1>0$ and $p_1>1$ such that for any $a > 1$ and $\mu>0$, there holds
\begin{align}\label{eq:prop2}
\Phi(a\mu) \le K_1 a^{p_1} \Phi(\mu).
\end{align}
\end{lemma}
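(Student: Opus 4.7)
My plan is to prove the two implications separately and reduce each to a short argument using only the monotonicity/convexity of $\Phi$ together with dyadic iteration.

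First, the implication $(\Leftarrow)$ is essentially immediate: assuming the polynomial bound $\Phi(a\mu)\le K_1 a^{p_1}\Phi(\mu)$ for every $a>1$ and $\mu>0$, simply specialize to $a=2$ to conclude $\Phi(2\mu)\le (K_1 2^{p_1})\Phi(\mu)$, which is the $\Delta_2$ condition with $\tau_1 = K_1 2^{p_1}$. The only thing to check is $\tau_1\ge 2$, but this follows automatically from the convexity of $\Phi$ together with $\Phi(0)=0$ (a consequence of $\lim_{\mu\to 0^+}\Phi(\mu)/\mu = 0$), which yields $\Phi(2\mu)\ge 2\Phi(\mu)$.

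For $(\Rightarrow)$, suppose $\Phi(2\mu)\le\tau_1\Phi(\mu)$ holds for some $\tau_1\ge 2$. Given any $a>1$, pick the unique integer $k\ge 1$ with $2^{k-1}\le a < 2^k$. Monotonicity of $\Phi$ and iteration of the $\Delta_2$ inequality $k$ times give
\[
\Phi(a\mu)\le\Phi(2^k\mu)\le\tau_1^k\,\Phi(\mu).
\]
Since $k\le \log_2 a + 1$, we bound $\tau_1^k\le\tau_1\cdot\tau_1^{\log_2 a} = \tau_1\cdot a^{\log_2\tau_1}$, so the choice $K_1 := \tau_1$ and $p_1 := \log_2\tau_1$ yields the claimed polynomial growth.

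The one delicate point, which I would treat as the main obstacle, is verifying that the exponent $p_1$ can be taken strictly greater than $1$. A priori we only know $\tau_1\ge 2$, hence $p_1\ge 1$. The strict inequality must come from the superlinear growth built into the definition of a Young function, namely $\lim_{\mu\to\infty}\Phi(\mu)/\mu=\infty$. My plan for this step is a contradiction argument: if $\tau_1=2$, then combining $\Phi(2\mu)\le 2\Phi(\mu)$ with the convexity bound $\Phi(2\mu)\ge 2\Phi(\mu)$ forces equality $\Phi(2\mu)=2\Phi(\mu)$; iterating, $\Phi(2^k\mu)=2^k\Phi(\mu)$ for every $k$, so $\Phi(\mu)/\mu$ is constant along every dyadic orbit, contradicting $\lim_{\mu\to\infty}\Phi(\mu)/\mu=\infty$. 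Therefore we may assume $\tau_1>2$ strictly, giving $p_1 = \log_2\tau_1 > 1$ as required.
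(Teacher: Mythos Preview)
The paper does not supply its own proof of this lemma; it simply refers the reader to \cite[Lemma~2.2.7]{Hasto}. Your argument is correct and self-contained: the backward direction is immediate by specialization to $a=2$, and the forward direction via dyadic iteration gives $\Phi(a\mu)\le \tau_1\, a^{\log_2\tau_1}\Phi(\mu)$. Your handling of the one nontrivial point---that the exponent $p_1=\log_2\tau_1$ is strictly larger than $1$---is sound: if the doubling constant could be taken equal to $2$, then combining $\Phi(2\mu)\le 2\Phi(\mu)$ with the convexity bound $\Phi(2\mu)\ge 2\Phi(\mu)$ forces $\Phi(2^k\mu)/(2^k\mu)=\Phi(\mu)/\mu$ for all $k$, contradicting $\lim_{\mu\to\infty}\Phi(\mu)/\mu=\infty$. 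Hence any admissible $\tau_1$ must exceed $2$, and $p_1>1$ follows.
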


\begin{definition}[Orlicz spaces]
Let a bounded open subset $\Omega \subset \mathbb{R}^n$ and the Young function $\Phi \in \Delta_2$. The Orlicz class $\mathcal{O}^{\Phi}(\Omega)$ is defined to be the set all of measurable functions $f:\Omega \to \mathbb{R}$ satisfying 
$$\int_\Omega{\Phi(|f(x)|)dx} < \infty.$$
The Orlicz space $L^\Phi(\Omega)$ is the smallest linear space containing $\mathcal{O}^{\Phi}(\Omega)$, endowed with the Luxemburg norm
\begin{align*}
\|f\|_{L^\Phi(\Omega)} = \inf\left\{\tau >0 : \ \int_\Omega{\Phi\left(\frac{|f(x)|}{\tau} \right) dx} \le 1 \right\}.
\end{align*}
\end{definition}
For further reading on the theory of Orlicz spaces, we address the reader to~\cite{Orlicz1932,Rao1991,Hasto} with many references given therein.

\begin{definition}[$\nabla_2$ condition]
The Young function $\Phi$ is said to satisfy the $\nabla_2$ condition, $\Phi \in \nabla_2$, if there exists $\tau_2>1$ such that
\begin{align*}
\Phi(\mu) \le \frac{\Phi(\tau_2 \mu)}{2\tau_2}.
\end{align*}
\end{definition}

The following property of Young function in $\nabla_2$ is similar to Lemma~\ref{lem:Delta2_pro2}, see~\cite{Hasto} for its proof.

\begin{lemma}\label{lem:nabla2}
Let $\Phi$ be a Young function. Then, $\Phi \in \nabla_2$ if and only if there exist two constants $K_2>0$ and $p_2>1$ such that for any $a \in (0,1)$ and $\mu>0$ there holds
\begin{align}\label{eq:nabla2}
\Phi(a \mu) \le K_2 a^{p_2} \Phi(\mu).
\end{align}
\end{lemma}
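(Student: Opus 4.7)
The plan is to mirror the structure of Lemma~\ref{lem:Delta2_pro2}: both the $\Delta_2$ and $\nabla_2$ conditions are single-scale (one-step) statements, whereas the power-type estimate in~\eqref{eq:nabla2} is a multi-scale statement, so the real content of the equivalence is an iteration argument in the forward direction. The reverse direction will be almost immediate. Since $\nabla_2$ can be rewritten as $\Phi(\tau_2 \mu)\ge 2\tau_2\,\Phi(\mu)$, iterating once yields $\Phi(\tau_2^k \mu)\ge (2\tau_2)^k \Phi(\mu)$ for every nonneg.\ integer $k$, and this exponential lower bound on $\Phi$ at dyadic-type scales is exactly what will translate, via monotonicity of $\Phi$, into the polynomial upper bound on $\Phi(a\mu)$ for intermediate $a$.

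For the forward direction I would proceed as follows. Fix $a\in(0,1)$ and $\mu>0$, and choose the unique integer $k\ge 0$ with $\tau_2^{-k-1}<a\le \tau_2^{-k}$. Then $\mu\ge a\mu\cdot\tau_2^k$, so monotonicity of $\Phi$ followed by the iterated $\nabla_2$ inequality gives
\begin{equation*}
\Phi(\mu)\ge \Phi(\tau_2^k\, a\mu)\ge (2\tau_2)^k\,\Phi(a\mu),
\end{equation*}
that is, $\Phi(a\mu)\le (2\tau_2)^{-k}\Phi(\mu)$. The only remaining task is to encode $(2\tau_2)^{-k}$ as a constant multiple of $a^{p_2}$. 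Taking $p_2:=1+\log 2/\log\tau_2>1$ one has $p_2\log\tau_2=\log(2\tau_2)$, and since $a>\tau_2^{-k-1}$ gives $\log(1/a)<(k+1)\log\tau_2$, a short computation shows $a^{p_2}\ge (2\tau_2)^{-(k+1)}$, hence $(2\tau_2)^{-k}\le 2\tau_2\cdot a^{p_2}$. This yields~\eqref{eq:nabla2} with $K_2=2\tau_2$ and $p_2$ as above.

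The reverse direction is nearly algebraic: assuming~\eqref{eq:nabla2} holds with some $K_2>0$ and $p_2>1$, apply it with the choice $a=1/\tau_2\in(0,1)$ to the argument $\tau_2\mu$ in place of $\mu$ to obtain $\Phi(\mu)\le K_2\tau_2^{-p_2}\Phi(\tau_2\mu)$. To recover the $\nabla_2$ condition it suffices to make $K_2\tau_2^{-p_2}\le 1/(2\tau_2)$, i.e.\ $\tau_2^{p_2-1}\ge 2K_2$, which is possible precisely because $p_2>1$; the choice $\tau_2:=\max\{2,(2K_2)^{1/(p_2-1)}\}$ works. The only step that requires a bit of care is the bookkeeping of the exponents in the forward direction, since one must verify $p_2>1$ and that the discretization $k$ produces a constant $K_2$ independent of $a$ and $\mu$; apart from this, everything is routine, which is consistent with the authors simply citing~\cite{Hasto}.
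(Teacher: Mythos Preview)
Your argument is correct in both directions: the iteration $\Phi(\tau_2^k\mu)\ge(2\tau_2)^k\Phi(\mu)$ together with the dyadic choice of $k$ and the exponent $p_2=1+\log 2/\log\tau_2$ yields~\eqref{eq:nabla2} with $K_2=2\tau_2$, and the reverse implication follows by choosing $\tau_2$ large enough that $K_2\tau_2^{1-p_2}\le 1/2$, which is possible since $p_2>1$. There is nothing to compare against, since the paper does not supply its own proof of this lemma but simply refers the reader to~\cite{Hasto}; your iteration argument is exactly the standard one found there and is the natural analogue of the proof behind Lemma~\ref{lem:Delta2_pro2}.
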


It is noticeable that the Young function $\Phi$ satisfies both the $\Delta_2$ and $\nabla_2$ conditions, often denoted by $\Phi \in \Delta_2 \cap \nabla_2$, ensures that the Young function $\Phi$ grows neither too fast nor too slow. Indeed, the limits in \eqref{eq:limit_young} combines with two conditions $\Delta_2$ and $\nabla_2$ gives
\begin{align*}
0 = \Phi(0) = \lim_{\mu\to 0^+}{\Phi(\mu)}, \quad \text{and} \quad  \lim_{\mu \to \infty}{\Phi(\mu)} = \infty,
\end{align*}
to obtain the limits are not too fast or too slow as $\mu \to 0^+$ and/or $\mu \to \infty$. The assertion of following Lemma shows us the fact that Orlicz class $\mathcal{O}^{\Phi}(\Omega)$ is not different from the the Orlicz space $L^\Phi(\Omega)$ in the case of Young function $\Phi \in \Delta_2 \cap \nabla_2$.

\begin{lemma}\label{lem:norm-O}
Let Young function $\Phi \in \Delta_2 \cap \nabla_2$. One can find a constant $C \ge 1$ such that
\begin{align}\label{eq:norm-O}
C^{-1} \left(\|f\|_{L^{\Phi}(\Omega)}^{p_2} - 1\right) \le \int_{\Omega} \Phi(|f(x)|) dx \le C  \left(\|f\|_{L^{\Phi}(\Omega)}^{p_1} + 1\right),
\end{align}
for all $f \in L^{\Phi}(\Omega)$, where $p_1 \ge p_2 >1$ are given as in Lemma~\ref{lem:Delta2_pro2} and Lemma~\ref{lem:nabla2}.
\end{lemma}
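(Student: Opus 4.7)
The plan is to work directly from the definition $\tau := \|f\|_{L^{\Phi}(\Omega)}$ and split into the two regimes $\tau \ge 1$ and $\tau < 1$. The power-type reformulations of the $\Delta_2$ and $\nabla_2$ conditions supplied by Lemma~\ref{lem:Delta2_pro2} and Lemma~\ref{lem:nabla2} are exactly the right tool to convert the dilation ``$|f|\leftrightarrow |f|/\tau$'' into a polynomial factor of the form $\tau^{p_1}$ or $\tau^{p_2}$, which is what the two-sided bound~\eqref{eq:norm-O} demands.

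For the upper bound I would argue as follows. If $\tau\ge 1$, apply Lemma~\ref{lem:Delta2_pro2} pointwise with $a=\tau$ and $\mu=|f(x)|/\tau$ to get $\Phi(|f|)\le K_1\tau^{p_1}\Phi(|f|/\tau)$, and then integrate over $\Omega$. The remaining ingredient is the standard fact that at the infimum one still has $\int_\Omega \Phi(|f|/\tau)\,dx\le 1$; this follows by letting $\tau'\searrow \tau$ among admissible values and invoking monotone convergence (the $\Delta_2$ hypothesis keeps all integrals finite, so the limit is legitimate). If instead $\tau<1$, then $|f|\le |f|/\tau$ and monotonicity of $\Phi$ gives $\int_\Omega \Phi(|f|)\,dx \le \int_\Omega \Phi(|f|/\tau)\,dx\le 1$. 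Combining the two regimes yields $\int_\Omega \Phi(|f|)\,dx \le K_1\tau^{p_1}+1$, which is the right-hand inequality of~\eqref{eq:norm-O}.

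For the lower bound I would use Lemma~\ref{lem:nabla2} symmetrically. When $\tau>1$, pick $\tau'\in(1,\tau)$; by the defining infimum one has $\int_\Omega \Phi(|f|/\tau')\,dx >1$, while Lemma~\ref{lem:nabla2} applied with $a=1/\tau'\in(0,1)$ and $\mu=|f|$ produces the pointwise estimate $\Phi(|f|/\tau')\le K_2(\tau')^{-p_2}\Phi(|f|)$. Integrating and rearranging gives $(\tau')^{p_2} < K_2 \int_\Omega \Phi(|f|)\,dx$; sending $\tau'\nearrow \tau$ yields $\tau^{p_2}\le K_2 \int_\Omega \Phi(|f|)\,dx$, hence $\|f\|_{L^{\Phi}(\Omega)}^{p_2}-1\le K_2\int_\Omega \Phi(|f|)\,dx$. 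The case $\tau\le 1$ is trivial because $\tau^{p_2}-1\le 0$. Setting $C:=\max\{K_1,K_2,1\}$ then produces both halves of~\eqref{eq:norm-O} simultaneously.

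The only genuinely delicate point in this plan is the boundary identity $\int_\Omega \Phi(|f|/\tau)\,dx \le 1$ at the infimum $\tau=\|f\|_{L^{\Phi}(\Omega)}$; without the $\Delta_2$ condition one could in principle have a jump there. With $\Delta_2$ in force, however, monotone convergence along $\tau'\searrow\tau$ is immediate, and every subsequent step reduces to a direct application of Lemmas~\ref{lem:Delta2_pro2} and~\ref{lem:nabla2}. No further structural input is needed and the rest of the argument is essentially bookkeeping of constants.
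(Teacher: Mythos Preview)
Your proposal is correct and follows essentially the same route as the paper: split according to whether $\|f\|_{L^{\Phi}(\Omega)}$ exceeds $1$, and then apply the pointwise power-type inequalities from Lemmas~\ref{lem:Delta2_pro2} and~\ref{lem:nabla2} with the dilation $|f|\leftrightarrow |f|/\tau$, integrating and using the unit-ball property of the Luxemburg norm. The only cosmetic difference is that for the lower bound the paper approaches $\|f\|_{L^{\Phi}(\Omega)}$ from above via $\tau_k\in\Gamma^f$ (implicitly using that $\int_\Omega\Phi(|f|/\|f\|_{L^\Phi})\,dx=1$ under $\Delta_2$), while you approach from below via $\tau'<\|f\|_{L^{\Phi}(\Omega)}$ and the strict inequality $\int_\Omega\Phi(|f|/\tau')\,dx>1$; both yield the same estimate.
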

\begin{proof}
For every $f \in L^{\Phi}(\Omega)$, let us set 
\begin{align*}
\Gamma^f = \left\{\tau >0 : \ \int_\Omega{\Phi\left(\frac{|f(x)|}{\tau} \right) dx} \le 1 \right\}.
\end{align*}
It is easy to see that if $1 \in \Gamma^f$ then $\int_{\Omega} \Phi(|f(x)|) dx \le 1$ and $\|f\|_{L^{\Phi}(\Omega)} \le 1$ which imply to~\eqref{eq:norm-O} with $C = 1$. Otherwise, if $1 \not \in \Gamma^f$ then $\int_{\Omega} \Phi(|f(x)|) dx > 1$ which follows that
\begin{align*}
1 < \int_{\Omega} \Phi(|f(x)|) dx \le  \int_\Omega{\Phi\left(\frac{|f(x)|}{\tau} \right) dx}, \quad \forall \tau \le 1.
\end{align*}
That means $\Gamma^f \subset (1,\infty)$. In this case, without loss of generality we may assume that there exists a decreasing sequence $(\tau_k)_{k \in \mathbb{N}} \subset \Gamma^f \cap (1,\infty)$ such that $\tau_k \to \|f\|_{L^{\Phi}(\Omega)}$ as $k$ tends to infinity. Since $\Phi \in \Delta_2 \cap \nabla_2$, applying inequalities~\eqref{eq:prop2} in Lemma~\ref{lem:Delta2_pro2} and~\eqref{eq:nabla2} in Lemma~\ref{lem:nabla2} for every $k \in \mathbb{N}$, one has
\begin{align*}
K_2^{-1} \tau_k^{p_2}\int_{\Omega} \Phi\left(\frac{|f(x)|}{\tau_k}\right) dx\le \int_{\Omega} \Phi(|f(x)|) dx \le K_1 \tau_k^{p_1} \int_\Omega{\Phi\left(\frac{|f(x)|}{\tau_k}  \right) dx} \le  K_1 \tau_k^{p_1}.
\end{align*}
Sending $k \to \infty$ in this inequality, one obtains that
\begin{align*}
K_2^{-1}\|f\|_{L^{\Phi}(\Omega)}^{p_2} \le \int_{\Omega} \Phi(|f(x)|) dx \le K_1 \|f\|_{L^{\Phi}(\Omega)}^{p_1},
\end{align*}
which implies to~\eqref{eq:norm-O}. 
\end{proof}

\begin{remark}
Let $\Phi$ be a Young function belonging to $\Delta_2 \cap \nabla_2$. Then, for $p_1 \ge p_2 >1$ given as in Lemma~\ref{lem:Delta2_pro2} and Lemma~\ref{lem:nabla2}, one has
\begin{align*}
L^{p_1}(\Omega) \subset L^\Phi(\Omega) \subset L^{p_2}(\Omega) \subset L^1(\Omega).
\end{align*}
\end{remark}
\begin{remark}
It states that the Orlicz space $L^{\Phi}$ generalizes $L^p$ space for $p>1$ in the sense that when $\Phi(\mu) = \mu^p$, $\mu \ge 0$ is the Young function, $\Phi \in \Delta_2 \cap \nabla_2$ and then it becomes the special case Lebesgue space, i.e. $L^\Phi(\Omega) = L^p(\Omega)$.
\end{remark}

\begin{theorem}[Orlicz norm estimates]
\label{theo:Orlicz}
Let Young function $\Phi \in \Delta_2$ and $p_1 >1$ given as in Lemma~\ref{lem:Delta2_pro2}. Assume that two functions $\mathcal{F}$, $\mathcal{G} \in L^1(\Omega; \mathbb{R}^+)$ satisfy~\ref{ing:A2_1} and~\ref{ing:A3}. Then for every $\alpha \in [0,\frac{n}{\gamma})$ satisfying $\alpha > n\left(\frac{1}{\gamma} - \frac{1}{p_1}\right)$, if $\mathbf{M}_{\alpha}\mathcal{F} \in L^{\Phi}(\Omega)$ then $\mathbf{M}_{\alpha}\mathcal{G} \in L^{\Phi}(\Omega)$ corresponding the following estimate
\begin{align}\label{eq:theo-Orlicz-2}
\|\mathbf{M}_{\alpha}\mathcal{G}\|_{L^{\Phi}(\Omega)} \le C_0 \|\mathbf{M}_{\alpha}\mathcal{F}\|_{L^{\Phi}(\Omega)}.
\end{align}
Moreover, under the local comparison~\ref{ing:A2_2}, the inequality~\eqref{eq:theo-Orlicz-2} holds for all $\alpha \in [0,n)$.
\end{theorem}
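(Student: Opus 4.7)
The plan is to reduce the Orlicz norm estimate to a modular inequality of the form
\[
\int_\Omega \Phi\!\left(\frac{\mathbf{M}_\alpha \mathcal{G}(x)}{\lambda}\right) dx \le C_{\Phi}\int_\Omega \Phi\!\left(\frac{\mathbf{M}_\alpha \mathcal{F}(x)}{\lambda}\right) dx,
\]
valid uniformly in $\lambda > 0$, by integrating the fractional-maximal distribution inequality of Theorem~\ref{theo:G-lam-A} (or Theorem~\ref{theo:G-lam-B} under~\ref{ing:A2_2}) against the Young function $\Phi$. I would start from the layer-cake representation
\[
\int_\Omega \Phi\!\left(\frac{\mathbf{M}_\alpha \mathcal{G}(x)}{\lambda}\right) dx = \int_0^\infty \Phi'(s)\, d_{\mathcal{G}}^{\alpha}(\Omega;\lambda s)\, ds,
\]
and rescale $s = \sigma r$ with $\sigma = \varepsilon^{-(n-\alpha\gamma)/(n\gamma)}$ so that Theorem~\ref{theo:G-lam-A} yields $d_{\mathcal{G}}^\alpha(\Omega;\lambda\sigma r)\le C\varepsilon\, d_{\mathcal{G}}^\alpha(\Omega;\lambda r) + d_{\mathcal{F}}^\alpha(\Omega;\kappa\lambda r)$.

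This splits the integral into two summands. For the first, I invoke the $\Delta_2$ characterization in Lemma~\ref{lem:Delta2_pro2}, differentiated in the argument, in the form $\Phi'(\sigma r) \le K_1\sigma^{p_1-1}\Phi'(r)$ for $\sigma>1$. After undoing the substitution this produces a factor
\[
CK_1\varepsilon\sigma^{p_1} = CK_1\,\varepsilon^{\,1 - p_1(n-\alpha\gamma)/(n\gamma)}
\]
in front of a copy of $\int_\Omega \Phi(\mathbf{M}_\alpha \mathcal{G}/\lambda)\,dx$. The hypothesis $\alpha > n(1/\gamma - 1/p_1)$ is precisely what forces the exponent $1 - p_1(n-\alpha\gamma)/(n\gamma)$ to be strictly positive, so for $\varepsilon$ small enough this factor is less than $1/2$ and can be absorbed into the left-hand side. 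For the second summand, substituting $\eta = \kappa r$ and applying the same $\Delta_2$ scaling to $\Phi'(\sigma\eta/\kappa)$ produces a multiple of $\int_\Omega \Phi(\mathbf{M}_\alpha \mathcal{F}/\lambda)\, dx$, giving the desired modular inequality with $C_\Phi = 2K_1(\sigma/\kappa)^{p_1}$.

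Choosing $\lambda = \|\mathbf{M}_\alpha \mathcal{F}\|_{L^\Phi(\Omega)}$, so that $\int_\Omega \Phi(\mathbf{M}_\alpha \mathcal{F}/\lambda)\, dx \le 1$, gives $\int_\Omega \Phi(\mathbf{M}_\alpha \mathcal{G}/\lambda)\, dx \le C_\Phi$. Passing from this bound to the Luxemburg norm inequality~\eqref{eq:theo-Orlicz-2} is straightforward via Lemma~\ref{lem:norm-O} applied to $\mathbf{M}_\alpha \mathcal{G}$, after a further rescaling of $\lambda$ by a constant depending on $C_\Phi$, $K_1$ and $p_1$. For the second statement, the identical argument with Theorem~\ref{theo:G-lam-B} replacing Theorem~\ref{theo:G-lam-A} goes through: since $\sigma_0$ is there independent of $\varepsilon$, there is no need to counter-balance $\sigma$ against $\varepsilon$, so the restriction $\alpha > n(1/\gamma - 1/p_1)$ drops out and any $\alpha \in [0,n)$ is admissible.

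The main obstacle is the subtle balancing in step two: one must choose $\sigma$ and $\varepsilon$ simultaneously so that (i) the FMD inequality of Theorem~\ref{theo:G-lam-A} applies (which rigidly fixes $\sigma = \varepsilon^{-(n-\alpha\gamma)/(n\gamma)}$), and (ii) the absorbing factor $\varepsilon\sigma^{p_1}$ is still small. This constraint is what propagates the $\Delta_2$-exponent $p_1$ into the range condition $\alpha > n(1/\gamma - 1/p_1)$, and it is the only place in the argument where the structure of the Young function enters nontrivially. A secondary delicate point is the transition from the modular bound to the Luxemburg norm, which requires care because the modular estimate is only quasi-multiplicative in $\lambda$; this is handled cleanly by the fact that the inequality is uniform in $\lambda$, so that a single rescaling after fixing $\lambda = \|\mathbf{M}_\alpha \mathcal{F}\|_{L^\Phi(\Omega)}$ suffices.
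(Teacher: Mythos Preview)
Your overall strategy matches the paper's: integrate the FMD inequality of Theorem~\ref{theo:G-lam-A} against the Young function, use the $\Delta_2$ scaling from Lemma~\ref{lem:Delta2_pro2} to produce the absorbable factor $CK_1\varepsilon^{1-p_1(n-\alpha\gamma)/(n\gamma)}$, and then pass to the Luxemburg norm. The paper implements the first step via the substitution $\lambda\mapsto\sigma^{-1}\Phi^{-1}(\mu)$ and works with $d_{\Phi(\mathbf{M}_\alpha\mathcal{G})}(\Omega;\mu)$ directly, rather than your layer-cake with $\Phi'$; the two are equivalent, though your pointwise bound $\Phi'(\sigma r)\le K_1\sigma^{p_1-1}\Phi'(r)$ is not literally Lemma~\ref{lem:Delta2_pro2} and needs a word of justification (Young functions are only a.e.\ differentiable, so one should really argue with the Stieltjes measure $d\Phi$ or, as the paper does, apply $\Delta_2$ to $\Phi$ itself).

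There is one genuine gap. Your passage from the modular bound to the norm inequality invokes Lemma~\ref{lem:norm-O}, but that lemma requires $\Phi\in\Delta_2\cap\nabla_2$, whereas the theorem only assumes $\Phi\in\Delta_2$. The paper avoids this extra hypothesis entirely: since the modular inequality
\[
\int_\Omega\Phi\!\left(\frac{\mathbf{M}_\alpha\mathcal{G}}{\tau}\right)dx\le C_0\int_\Omega\Phi\!\left(\frac{\mathbf{M}_\alpha\mathcal{F}}{\tau}\right)dx
\]
holds uniformly in $\tau>0$ (by scaling the level in~\eqref{est:O-1}), convexity of $\Phi$ alone gives, for any $\tau$ with $\int_\Omega\Phi(\mathbf{M}_\alpha\mathcal{F}/\tau)\,dx\le 1$,
\[
\int_\Omega\Phi\!\left(\frac{\mathbf{M}_\alpha\mathcal{G}}{C_0\tau}\right)dx\le\frac{1}{C_0}\int_\Omega\Phi\!\left(\frac{\mathbf{M}_\alpha\mathcal{G}}{\tau}\right)dx\le 1,
\]
so $\Gamma^{\mathcal{F}}\subset C_0^{-1}\Gamma^{\mathcal{G}}$ and~\eqref{eq:theo-Orlicz-2} follows. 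You should replace the appeal to Lemma~\ref{lem:norm-O} with this convexity argument.
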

\begin{proof}
Thanks to Theorem~\ref{theo:G-lam-A}, one can find $\varepsilon_0>0$ such that for all $\varepsilon \in (0,\varepsilon_0)$ and $\lambda>0$, there holds
\begin{align}\label{est:O-1}
d_{\mathcal{G}}^{\alpha}(\Omega; \sigma \lambda)   \le C \varepsilon d_{\mathcal{G}}^{\alpha}(\Omega; \lambda)  + d_{\mathcal{F}}^{\alpha}(\Omega; \kappa \lambda),
\end{align}
where $\sigma = \varepsilon^{-\frac{n-\alpha \gamma}{n\gamma}}$ and $\kappa = \varepsilon c_{\varepsilon}^{-1}$. For every $\mu>0$, let us replace $\lambda$ by $\sigma^{-1} \Phi^{-1}(\mu)$ in~\eqref{est:O-1}, this fractional-maximal distribution inequality can be rewritten as
\begin{align}\label{est:O-2}
d_{\mathcal{G}}^{\alpha}(\Omega;  \Phi^{-1}(\mu))   \le C \varepsilon d_{\mathcal{G}}^{\alpha}(\Omega; \sigma^{-1}\Phi^{-1}(\mu))  + d_{\mathcal{F}}^{\alpha}(\Omega; \kappa \sigma^{-1}\Phi^{-1}(\mu)).
\end{align} 
The inequality~\eqref{eq:prop2} in Lemma~\ref{lem:Delta2_pro2} gives us
\begin{align*}
\Phi(\sigma\mathbf{M}_{\alpha}\mathcal{G}) \le K_1 \sigma^{p_1} \Phi(\mathbf{M}_{\alpha}\mathcal{G}), 
\end{align*}
with notice that $\sigma >1$ for all $\varepsilon \in (0,1)$. This fact implies that
\begin{align*}
d_{\mathcal{G}}^{\alpha}(\Omega; \sigma^{-1}\Phi^{-1}(\mu)) & = d_{\Phi(\sigma\mathbf{M}_{\alpha}\mathcal{G})}(\Omega; \mu) \\
& \le d_{K_1 \sigma^{p_1} \Phi(\mathbf{M}_{\alpha}\mathcal{G})}(\Omega; \mu) = d_{\Phi(\mathbf{M}_{\alpha}\mathcal{G})}(\Omega; K_1^{-1} \sigma^{-p_1} \mu),
\end{align*}
and similarly
\begin{align*}
d_{\mathcal{F}}^{\alpha}(\Omega; \kappa \sigma^{-1}\Phi^{-1}(\mu)) \le d_{\Phi(\mathbf{M}_{\alpha}\mathcal{F})}(\Omega; \kappa K_1^{-1} \sigma^{-p_1} \mu),
\end{align*}
which with combining~\eqref{est:O-2} one has
\begin{align}\label{est:O-3}
d_{\Phi(\mathbf{M}_{\alpha}\mathcal{G})}(\Omega; \mu)  \le C \varepsilon d_{\Phi(\mathbf{M}_{\alpha}\mathcal{G})}(\Omega; K_1^{-1} \sigma^{-p_1} \mu) + d_{\Phi(\mathbf{M}_{\alpha}\mathcal{F})}(\Omega; K_1^{-1} \kappa^{p_1} \sigma^{-p_1} \mu)   .
\end{align}
Inequality~\eqref{est:O-3} leads to
\begin{align}\nonumber
\int_{\Omega} \Phi(\mathbf{M}_{\alpha}\mathcal{G})(x) dx & = \int_0^{\infty} d_{\Phi(\mathbf{M}_{\alpha}\mathcal{G})}(\Omega; \mu)  d\mu \\ \nonumber
& \le C \varepsilon \int_0^{\infty} d_{\Phi(\mathbf{M}_{\alpha}\mathcal{G})}(\Omega; K_1^{-1} \sigma^{-p_1} \mu) d\mu + \int_0^{\infty} d_{\Phi(\mathbf{M}_{\alpha}\mathcal{F})}(\Omega; K_1^{-1} \kappa^{p_1} \sigma^{-p_1} \mu)  d\mu,
\end{align}
which implies to the following estimate by changing variables in the integrals
\begin{align}\nonumber
\int_{\Omega} \Phi(\mathbf{M}_{\alpha}\mathcal{G})(x) dx & \le C K_1 \sigma^{p_1} \varepsilon \int_0^{\infty} d_{\Phi(\mathbf{M}_{\alpha}\mathcal{G})}(\Omega;  \mu) d\mu \\ \nonumber
& \hspace{3cm} + K_1\kappa^{-p_1} \sigma^{p_1}\int_0^{\infty} d_{\Phi(\mathbf{M}_{\alpha}\mathcal{F})}(\Omega;  \mu)  d\mu \\ \nonumber
& \le C K_1  \varepsilon^{1-p_1\left(\frac{1}{\gamma}-\frac{\alpha}{n}\right)} \int_{\Omega} \Phi(\mathbf{M}_{\alpha}\mathcal{G})(x) dx \\ \label{est:O-4}
& \hspace{3cm} + K_1 \kappa^{-p_1} \sigma^{p_1}\int_{\Omega} \Phi(\mathbf{M}_{\alpha}\mathcal{F})(x) dx.
\end{align}
For all $\alpha \in [0,\frac{n}{\gamma})$ and $\alpha > n\left(\frac{1}{\gamma} - \frac{1}{p_1}\right)$, one can choose $\varepsilon = \min\{\varepsilon_0, \varepsilon_1\}$ in~\eqref{est:O-4} with $\varepsilon_1$ is small enough such that $ C K_1  \varepsilon_1^{1-p_1\left(\frac{1}{\gamma}-\frac{\alpha}{n}\right)} \le \frac{1}{2}$ to observe that
\begin{align}\label{eq:theo-Orlicz-1}
\int_{\Omega} \Phi(\mathbf{M}_{\alpha}\mathcal{G})(x) dx  & \le C_0 \int_{\Omega} \Phi(\mathbf{M}_{\alpha}\mathcal{F})(x) dx.
\end{align}
with constant $C_0 =  \max\left\{1,K_1 \varepsilon_1^{-p_1} c_{\varepsilon_1}^{p_1} \varepsilon_1^{p_1\left(\frac{\alpha}{n} - \frac{1}{\gamma}\right)}\right\} \ge 1$.\\

For every $\tau >0$,  scaling $\lambda$ by $\frac{\lambda}{\tau}$ in level-set inequality on fractional-maximal distribution~\eqref{est:O-1}, one may prove a similar version of inequality~\eqref{eq:theo-Orlicz-1} corresponding to $\frac{\mathbf{M}_{\alpha}\mathcal{G}}{\tau}$ and $\frac{\mathbf{M}_{\alpha}\mathcal{F}}{\tau}$ as follows
\begin{align}\label{eq:theo-Orlicz-1b}
\int_{\Omega} \Phi\left(\frac{\mathbf{M}_{\alpha}\mathcal{G}}{\tau}\right)dx  & \le C_0 \int_{\Omega} \Phi\left(\frac{\mathbf{M}_{\alpha}\mathcal{F}}{\tau}\right)dx.
\end{align}
Next we are going to prove~\eqref{eq:theo-Orlicz-2}. Let us introduce two sets as follows
\begin{align}\nonumber 
\Gamma^{f} = \left\{\tau >0: \ \int_{\Omega} \Phi\left(\frac{\mathbf{M}_{\alpha}f}{\tau}\right)dx \le 1 \right\}, 
\end{align}
where $f = \mathcal{F}$ or $f = \mathcal{G}$. For every $\tau \in \Gamma^{\mathcal{F}}$,   we apply~\eqref{eq:theo-Orlicz-1b} and the convexity of $\Phi$ with the fact $C_0 \ge 1$ to conclude that
\begin{align*}
\int_{\Omega} \Phi\left(\frac{\mathbf{M}_{\alpha}\mathcal{G}}{C_0\tau}\right)dx \le \frac{1}{C_0} \int_{\Omega} \Phi\left(\frac{\mathbf{M}_{\alpha}\mathcal{G}}{\tau}\right)dx \le \int_{\Omega} \Phi\left(\frac{\mathbf{M}_{\alpha}\mathcal{F}}{\tau}\right)dx \le 1,
\end{align*}
which ensure that $C_0\tau \in \Gamma^{\mathcal{G}}$. We conclude that $\Gamma^{\mathcal{F}} \subset \frac{1}{C_0} \Gamma^{\mathcal{G}}$ and hence $\inf \Gamma^{\mathcal{F}} \ge \frac{1}{C_0} \inf\Gamma^{\mathcal{G}}$ which completes the proof of~\eqref{eq:theo-Orlicz-2}. Finally,  under the local comparison~\ref{ing:A2_2}, the distribution inequality~\eqref{est:O-1} is valid even for constant $\sigma = \sigma_0$ which does not depend on $\varepsilon$. It follows that the exponent of $\varepsilon$ in~\eqref{est:O-4} is only 1. Therefore~\eqref{eq:theo-Orlicz-1} holds for all $\alpha \in [0,n)$ in this case.
\end{proof}

\subsection{In Orlicz-Lorentz spaces}\label{subsec:L-O}
Being a natural generalization of Orlicz and Lorentz spaces, the so-called Orlicz-Lorentz spaces are designed to glue properties in both Orlicz and Lorentz spaces in some sense. These spaces have a rich structure that has been constructed in different ways,   see \cite{MontOL,Kaminska}. Here, it is also certainly pleasing to deal with Orlicz-Lorentz norm estimates in our work.

In the sequel, we remark that definition of Orlicz-Lorentz spaces will be presented in the language of FMD. 
\begin{definition}[Orlicz-Lorentz space]
Let Young function $\Phi \in \Delta_2$ and two parameters $0< q < \infty$ and $0< s \le \infty$. The Orlicz-Lorentz class $\mathcal{O}^{\Phi}(q,s)(\Omega)$ contains all of measurable functions $f$ such that $$\|\Phi(|f|)\|_{L^{q,s}(\Omega)} < \infty.$$
The Orlicz-Lorentz space $L^{\Phi}(q,s)(\Omega)$ is defined as the linear hull of the Orlicz-Lorentz class $\mathcal{O}^{\Phi}(q,s)(\Omega)$, endowed with the following norm
 \begin{align*}
 \|f\|_{L^{\Phi}(q,s)(\Omega)} = \inf \left\{\tau >0: \ \left\|\Phi\left(\frac{|f|}{\tau}\right)\right\|_{L^{q,s}(\Omega)} \le 1\right\}.
 \end{align*}
\end{definition}
We recall here the quasi-norm of $\Phi(|f|)$ in Lorentz spaces $L^{q,s}(\Omega)$ is given by
\begin{align*}
\|\Phi(|f|)\|_{L^{q,s}(\Omega)} = \left[\int_0^{\infty} q\left[ \mu^q d_{\Phi(|f|)}(\Omega; \mu)\right]^{\frac{s}{q}} \frac{d\mu}{\mu}\right]^{\frac{1}{s}},
\end{align*}
if $s < \infty$ and otherwise
\begin{align*}
\|\Phi(|f|)\|_{L^{q,\infty}(\Omega)} = \sup_{\mu >0} \left[\mu^q d_{\Phi(|f|)}(\Omega; \mu)\right]^{\frac{1}{q}}.
\end{align*}

The Orlicz-Lorentz spaces (equipped with the Luxemburg norm and the norm in Lorentz spaces) also have a lot of interesting properties, thus one can expect the more general and beautiful results in this case.

\begin{theorem}[Orlicz-Lorentz norm estimates]\label{theo:L-O}
Let Young function $\Phi \in \Delta_2$ and $p_1>1$ given as in Lemma~\ref{lem:Delta2_pro2}. Assume that two functions $\mathcal{F}$, $\mathcal{G} \in L^1(\Omega; \mathbb{R}^+)$ satisfy~\ref{ing:A2_1} and \ref{ing:A3}. Then for every $\alpha \in [0,\frac{n}{\gamma})$, $0 < q < \frac{n\gamma}{p_1(n - \gamma \alpha)}$ and $0< s \le \infty$, if $\mathbf{M}_{\alpha}\mathcal{F} \in L^{\Phi}(q,s)(\Omega)$ then $\mathbf{M}_{\alpha}\mathcal{G} \in L^{\Phi}(q,s)(\Omega)$ corresponding the following estimate
\begin{align}\label{eq:L-O}
\|\mathbf{M}_{\alpha}\mathcal{G}\|_{L^{\Phi}(q,s)(\Omega)} \le C \|\mathbf{M}_{\alpha}\mathcal{F}\|_{L^{\Phi}(q,s)(\Omega)}.
\end{align}
Moreover, under local comparison~\ref{ing:A2_2}, the inequality~\eqref{eq:L-O} holds for all $\alpha \in [0,n)$, $0< q < \infty$ and $0<s \le \infty$.
\end{theorem}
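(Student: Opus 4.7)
The strategy mirrors that of Theorem~\ref{theo:Orlicz}, upgrading the Lebesgue integral $\int_\Omega \Phi(\cdot)\,dx$ to the Lorentz quasi-norm $\|\Phi(\cdot)\|_{L^{q,s}(\Omega)}$ and then reducing the Luxemburg-type $\|\cdot\|_{L^{\Phi}(q,s)}$ to the homogeneous estimate via a convex-scaling trick. First I would invoke Theorem~\ref{theo:G-lam-A} to obtain
\[
d_{\mathcal{G}}^{\alpha}(\Omega;\sigma\lambda) \le C\varepsilon\,d_{\mathcal{G}}^{\alpha}(\Omega;\lambda) + d_{\mathcal{F}}^{\alpha}(\Omega;\kappa\lambda),
\]
with $\sigma=\varepsilon^{-(n-\alpha\gamma)/(n\gamma)}$ and $\kappa=\varepsilon c_{\varepsilon}^{-1}$. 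Substituting $\lambda = \sigma^{-1}\Phi^{-1}(\mu)$ and applying Lemma~\ref{lem:Delta2_pro2}, exactly as in the derivation of~\eqref{est:O-3}, this passes to the distribution inequality
\[
d_{\Phi(\mathbf{M}_{\alpha}\mathcal{G})}(\Omega;\mu) \le C\varepsilon\,d_{\Phi(\mathbf{M}_{\alpha}\mathcal{G})}\bigl(\Omega;K_{1}^{-1}\sigma^{-p_{1}}\mu\bigr) + d_{\Phi(\mathbf{M}_{\alpha}\mathcal{F})}\bigl(\Omega;K_{1}^{-1}\kappa^{p_{1}}\sigma^{-p_{1}}\mu\bigr).
\]

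Next, for $0<s<\infty$, I would raise this bound to the $s/q$-th power using the elementary inequality $(a+b)^{s/q}\le C_{s,q}(a^{s/q}+b^{s/q})$, multiply by $q\,\mu^{s-1}$, integrate over $\mu\in(0,\infty)$, and perform the changes of variable $\nu = K_{1}^{-1}\sigma^{-p_{1}}\mu$ and $\nu = K_{1}^{-1}\kappa^{p_{1}}\sigma^{-p_{1}}\mu$ in the two resulting integrals. Following the computation that takes~\eqref{est:A-1} into~\eqref{est:5b} in the proof of Theorem~\ref{theo:norm-L-1}, this produces
\[
\|\Phi(\mathbf{M}_{\alpha}\mathcal{G})\|_{L^{q,s}(\Omega)}^{s} \le C\,(K_{1}\sigma^{p_{1}})^{s}\varepsilon^{s/q}\,\|\Phi(\mathbf{M}_{\alpha}\mathcal{G})\|_{L^{q,s}(\Omega)}^{s} + C\,(K_{1}\sigma^{p_{1}}\kappa^{-p_{1}})^{s}\,\|\Phi(\mathbf{M}_{\alpha}\mathcal{F})\|_{L^{q,s}(\Omega)}^{s}.
\]
The absorbing coefficient simplifies to $C\,K_{1}^{s}\,\varepsilon^{s\bigl(1/q - p_{1}(n-\alpha\gamma)/(n\gamma)\bigr)}$, and the hypothesis $q < n\gamma/(p_{1}(n-\alpha\gamma))$ is precisely what makes this exponent strictly positive, so choosing $\varepsilon$ sufficiently small yields
\[
\|\Phi(\mathbf{M}_{\alpha}\mathcal{G})\|_{L^{q,s}(\Omega)} \le C_{0}\,\|\Phi(\mathbf{M}_{\alpha}\mathcal{F})\|_{L^{q,s}(\Omega)}.
\]
The case $s=\infty$ is handled identically upon replacing $\int_{0}^{\infty}\mu^{s-1}\,d\mu$ by $\sup_{\mu>0}$.

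To upgrade this homogeneous bound to the Luxemburg-type inequality~\eqref{eq:L-O}, I would use the positive $1$-homogeneity of $\mathbf{M}_{\alpha}$: rescaling $\mathcal{F},\mathcal{G}$ to $\mathcal{F}/\tau,\mathcal{G}/\tau$ (and the companion $\varphi,\psi$ in~\ref{ing:A2_1} to $\varphi/\tau,\psi/\tau$) leaves all ingredients~\ref{ing:A2_1} and~\ref{ing:A3} invariant with the same constants, so the previous step delivers $\|\Phi(\mathbf{M}_{\alpha}\mathcal{G}/\tau)\|_{L^{q,s}}\le C_{0}\,\|\Phi(\mathbf{M}_{\alpha}\mathcal{F}/\tau)\|_{L^{q,s}}$ for every $\tau>0$. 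Introducing $\Gamma^{f} := \{\tau>0 : \|\Phi(\mathbf{M}_{\alpha} f/\tau)\|_{L^{q,s}(\Omega)}\le 1\}$ and repeating the $\Gamma$-set argument that concludes Theorem~\ref{theo:Orlicz}, the convexity of $\Phi$ together with the positive $1$-homogeneity of $\|\cdot\|_{L^{q,s}}$ show $C_{0}\tau\in\Gamma^{\mathcal{G}}$ whenever $\tau\in\Gamma^{\mathcal{F}}$, hence $\Gamma^{\mathcal{F}}\subset C_{0}^{-1}\Gamma^{\mathcal{G}}$ and consequently $\|\mathbf{M}_{\alpha}\mathcal{G}\|_{L^{\Phi}(q,s)(\Omega)}\le C_{0}\,\|\mathbf{M}_{\alpha}\mathcal{F}\|_{L^{\Phi}(q,s)(\Omega)}$. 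For part~(ii), the identical reasoning with Theorem~\ref{theo:G-lam-B} replacing Theorem~\ref{theo:G-lam-A} applies: there $\sigma_{0}$ is independent of $\varepsilon$, so the absorbing factor becomes $C\,\sigma_{0}^{p_{1}s}\,\varepsilon^{s/q}$, whose exponent of $\varepsilon$ is unconditionally positive, thereby covering every $\alpha\in[0,n)$ and every $0<q<\infty$.

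The delicate point is tracking the interplay between the exponent $p_{1}$ supplied by $\Delta_{2}$ and the blow-up rate $\sigma=\varepsilon^{-(n-\alpha\gamma)/(n\gamma)}$ as $\varepsilon\to 0^{+}$: the upper bound $q < n\gamma/(p_{1}(n-\alpha\gamma))$ in part~(i) is dictated precisely by requiring $\sigma^{p_{1}s}\varepsilon^{s/q}\to 0$, and this is the only reason why part~(ii), where $\sigma_{0}$ is $\varepsilon$-free, can afford every $q\in(0,\infty)$ and every $\alpha\in[0,n)$.
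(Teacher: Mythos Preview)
Your proposal is correct and follows essentially the same route as the paper: derive the distribution inequality~\eqref{est:L-O-3} from Theorem~\ref{theo:G-lam-A} via Lemma~\ref{lem:Delta2_pro2}, integrate it against the Lorentz weight with a change of variables to obtain the homogeneous bound~\eqref{est:L-O-5}, absorb using the positivity of the exponent $\tfrac{1}{q}-p_1\tfrac{n-\alpha\gamma}{n\gamma}$, and then pass to the Luxemburg norm by the scaling/convexity $\Gamma$-set argument; the $\mathbf{(A2_2)}$ case is handled identically with $\sigma_0$ independent of $\varepsilon$. The only cosmetic difference is that the paper phrases the scaling as $\lambda\mapsto\lambda/\tau$ in the level-set inequality rather than as $(\mathcal{F},\mathcal{G})\mapsto(\mathcal{F}/\tau,\mathcal{G}/\tau)$, but by the $1$-homogeneity of $\mathbf{M}_\alpha$ these are equivalent.
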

\begin{proof}
Similar to the proof of Theorem~\ref{theo:Orlicz}, the following estimate
\begin{align}\label{est:L-O-3}
d_{\Phi(\mathbf{M}_{\alpha}\mathcal{G})}(\Omega; \mu)  \le C \varepsilon d_{\Phi(\mathbf{M}_{\alpha}\mathcal{G})}(\Omega; K_1^{-1} \sigma^{-p_1} \mu) + d_{\Phi(\mathbf{M}_{\alpha}\mathcal{F})}(\Omega; K_1^{-1} \kappa^{p_1} \sigma^{-p_1} \mu)   ,
\end{align}
holds for every $\mu>0$ and $\varepsilon \in (0,\varepsilon_0)$ with $\sigma = \varepsilon^{-\frac{n-\alpha \gamma}{n\gamma}}$ and $\kappa = \varepsilon c_{\varepsilon}^{-1}$. We infer from~\eqref{est:L-O-3} that
\begin{align}\nonumber
\|\Phi(\mathbf{M}_{\alpha}\mathcal{G})\|_{L^{q,s}(\Omega)}^s & = \int_0^{\infty} q\left[ \mu^q d_{\Phi(\mathbf{M}_{\alpha}\mathcal{G})}(\Omega; \mu)\right]^{\frac{s}{q}} \frac{d\mu}{\mu}  \\ \nonumber
& \le C \varepsilon^{\frac{s}{q}} \int_0^{\infty} q\left[\mu^q d_{\Phi(\mathbf{M}_{\alpha}\mathcal{G})}(\Omega; K_1^{-1} \sigma^{-p_1} \mu) \right]^{\frac{s}{q}} \frac{d\mu}{\mu} \\ \nonumber
& \qquad \qquad + C\int_0^{\infty} q\left[\mu^q d_{\Phi(\mathbf{M}_{\alpha}\mathcal{F})}(\Omega; K_1^{-1} \kappa^{p_1} \sigma^{-p_1} \mu)\right]^{\frac{s}{q}} \frac{d\mu}{\mu},
\end{align}
which is equivalent to the following estimate by changing variables
\begin{align}\nonumber
\|\Phi(\mathbf{M}_{\alpha}\mathcal{G})\|_{L^{q,s}(\Omega)}^s 
& \le C \varepsilon^{\frac{s}{q}} \left(K_1^{-1} \sigma^{-p_1}\right)^{-s} \int_0^{\infty} q\left[\mu^q d_{\Phi(\mathbf{M}_{\alpha}\mathcal{G})}(\Omega;  \mu) \right]^{\frac{s}{q}} \frac{d\mu}{\mu} \\ \nonumber
& \qquad \qquad + C \left(K_1^{-1} \kappa^{p_1} \sigma^{-p_1}\right)^{-s} \int_0^{\infty}   q\left[\mu^q d_{\Phi(\mathbf{M}_{\alpha}\mathcal{F})}(\Omega; \mu)\right]^{\frac{s}{q}} \frac{d\mu}{\mu} \\ \nonumber
& \le C K_1^{s} \sigma^{sp_1} \varepsilon^{\frac{s}{q}} \|\Phi(\mathbf{M}_{\alpha}\mathcal{G})\|_{L^{q,s}(\Omega)}^s + K_1^s \kappa^{sp_1} \sigma^{sp_1} \|\Phi(\mathbf{M}_{\alpha}\mathcal{F})\|_{L^{q,s}(\Omega)}^s.
\end{align}
Using a fundamental inequality, one gets that
\begin{align}\label{est:L-O-4}
\|\Phi(\mathbf{M}_{\alpha}\mathcal{G})\|_{L^{q,s}(\Omega)} & \le C \left( K_1 \sigma^{p_1} \varepsilon^{\frac{1}{q}} \|\Phi(\mathbf{M}_{\alpha}\mathcal{G})\|_{L^{q,s}(\Omega)} + K_1 \kappa^{p_1} \sigma^{p_1} \|\Phi(\mathbf{M}_{\alpha}\mathcal{F})\|_{L^{q,s}(\Omega)}\right).
\end{align}
Under the assumption of parameters as $\alpha \in [0,\frac{n}{\gamma})$, $0< s < \infty$ and $0 < q < \frac{n\gamma}{p_1(n - \gamma \alpha)}$, one may take $\varepsilon = \min\{\varepsilon_0,\varepsilon_1\}$ in~\eqref{est:L-O-4}, where $\varepsilon_1$ is chosen such that $ C K_1  \varepsilon_1^{\frac{1}{q}-p_1\left(\frac{1}{\gamma}-\frac{\alpha}{n}\right)} \le \frac{1}{2}$, to observe that
\begin{align}\label{est:L-O-5}
\|\Phi(\mathbf{M}_{\alpha}\mathcal{G})\|_{L^{q,s}(\Omega)}  & \le C_0  \|\Phi(\mathbf{M}_{\alpha}\mathcal{F})\|_{L^{q,s}(\Omega)},
\end{align}
where $C_0 = \max\left\{1, 2K_1 \kappa^{p_1} \sigma^{p_1}\right\}$. In a completely similar way we may prove~\eqref{est:L-O-5} even for the case $s = \infty$. By scaling on fractional-maximal distribution inequality~\eqref{est:O-1} with combining the convexity of $\Phi$, for all $\tau>0$ one also obtains that
\begin{align}\nonumber 
\left\|\Phi\left(\frac{\mathbf{M}_{\alpha}\mathcal{G}}{C_0\tau}\right)\right\|_{L^{q,s}(\Omega)} \le \frac{1}{C_0}\left\|\Phi\left(\frac{\mathbf{M}_{\alpha}\mathcal{G}}{\tau}\right)\right\|_{L^{q,s}(\Omega)}  & \le   \left\|\Phi\left(\frac{\mathbf{M}_{\alpha}\mathcal{G}}{\tau}\right)\right\|_{L^{q,s}(\Omega)}.
\end{align}
This inequality ensures that $\Lambda^{\mathcal{F}} \subset \frac{1}{C_0}\Lambda^{\mathcal{G}}$ which implies to~\eqref{eq:L-O}, where
\begin{align*}
\Lambda^f = \left\{\tau >0: \ \left\|\Phi\left(\frac{|f|}{\tau}\right)\right\|_{L^{q,s}(\Omega)} \le 1\right\},
\end{align*}

with $f = \mathbf{M}_{\alpha}\mathcal{G}$ or $f = \mathbf{M}_{\alpha}\mathcal{F}$.\\

Finally, under the local comparison~\ref{ing:A2_2}, the distribution inequality~\eqref{est:L-O-3} is valid even for constant $\sigma = \sigma_0$ which does not depend on $\varepsilon$. It follows that the exponent of $\varepsilon$ in~\eqref{est:L-O-4} is exactly $\frac{1}{q}$. For this reason, the estimate~\eqref{est:L-O-5} can be obtained with a relaxation on the constraints of $\alpha$ and $q$ in the previous case. Hence the inequality~\eqref{eq:L-O} holds for all $\alpha \in [0,n)$, $0< q < \infty$ and $0<s \le \infty$. The proof is complete.
\end{proof}

\section{Applications in regularity theory}\label{sec:app}
Based on the very effective technique with FMD, regularity estimates of solutions to a class of more general elliptic/parabolic equations will follow as an application. In this section, we apply the previous abstract results for investigating the regularity of weak solutions to both \emph{quasi-linear elliptic equations} and \emph{quasi-linear elliptic double obstacle problems} in prescribed spaces. For that purpose, we will utilize the theory of FMD method afore-discussed; and moreover, the well-proved norm estimates in Section~\ref{sec:abstract} will also be taken into account.
\subsection{Quasi-linear elliptic problems}
\subsubsection{Problem setting}
Let us consider the quasi-linear elliptic problems under nonhomogeneous Dirichlet boundary condition of the type
\begin{align}\tag{$\mathbf{P_1}$}
\label{eq:app}
\begin{cases}
\mathrm{div}(\mathbb{A}(x,\nabla u))  &= \ \mathrm{div}(\mathbb{B}(x,\mathbf{F})), \quad \ \text{in} \ \ \Omega, \\
\hspace{1.2cm} u &=\ \ \mathsf{g}, \qquad \hspace{1.3cm} \ \text{on} \ \ \partial \Omega.
\end{cases}
\end{align}
Here, $\Omega$ is an open bounded domain of $\mathbb{R}^n$ with suitable requirements for $\partial\Omega$ (will be discussed later); $\mathbf{F} \in L^p(\Omega;\mathbb{R}^n)$; boundary data $\mathsf{g} \in W^{1,p}(\Omega)$ for $p>1$. Further,  the quasi-linear operators $\mathbb{A}, \mathbb{B}: \Omega \times \mathbb{R}^n \rightarrow \mathbb{R}$ are Carath\'eodory vector valued functions (i.e, they are measurable respect to $\xi$ on $\Omega$ for every $\xi$ in $\mathbb{R}^n$ and they are continuous on $\mathbb{R}^n$ for almost every $x$ in $\Omega$) satisfying the $p$-monotone conditions ($p \in (1,n]$): there exist constants $\Lambda>0$ and $\varsigma \in [0,1]$ such that 
\begin{align}\label{eq:A1}
& \left| \mathbb{A}(x,\xi) \right|  \le \Lambda \left(\varsigma^2 + |\xi|^2 \right)^{\frac{p-1}{2}}, \\ 
\label{eq:A2}
& \langle \mathbb{A}(x,\xi_1)-\mathbb{A}(x,\xi_2), \xi_1 - \xi_2 \rangle \ge \Lambda^{-1} \Psi_{\varsigma}(\xi_1, \xi_2),\\ \label{eq:B}
& \left| \mathbb{B}(x,\xi) \right| \le \Lambda |\xi|^{p-1},
\end{align}
for almost every $x$ in $\Omega$ and every $\xi$, $\xi_1$, $\xi_2 \in \mathbb{R}^n \setminus \{0\}$, where the function $\Psi_{\varsigma}$ is defined by
\begin{align}\label{def:Phi}
\Psi_{\varsigma}(\xi_1, \xi_2) : = \left(\varsigma^2 + |\xi_1|^2 + |\xi_2|^2 \right)^{\frac{p-2}{2}}|\xi_1 - \xi_2|^2, \quad \xi_1, \, \xi_2 \in \mathbb{R}^n.
\end{align}

Classically, there exists a weak solution $u \in W^{1,p}(\Omega)$ to problems~\eqref{eq:app}, that is
\begin{align*}
\int_\Omega{\langle\mathbb{A}(x,\nabla u), \nabla\varphi \rangle dx} = \int_\Omega{\langle \mathbb{B}(x,\mathbf{F}), \nabla\varphi \rangle dx},
\end{align*}
holds for all $\varphi \in W_0^{1,p}(\Omega)$. For the sake of brevity, we use the notation $$u \in \mathrm{sol}(\mathbb{A},\mathbb{B}(\cdot,\mathbf{F}),\mathsf{g};\Omega)$$ to say that the function $u \in W^{1,p}(\Omega)$ is a weak solution to problem~\eqref{eq:app}.

The presence of $\varsigma$ in $\mathbb{A}$ brings a challenging feature in this kind of problems~\eqref{eq:app}. It is remarkable that for the special case when $\varsigma=0$ and $\mathbb{B}(x,\xi)=|\xi|^{p-2}\xi$, we have the certain quasi-linear divergence form elliptic equations, and regularity results here recover what obtained in our previous works in~\cite{PNJDE, MPTNsub, PNmix} in Lorentz, Lorentz-Morrey spaces and several research papers~\cite{BYZ2008} are matched in the Orlicz realm.  Otherwise with $s=1$ and $p=2$ see~\cite{MP11}, etc. Among the other works on this topic of investigation, we also must quote~\cite{Phuc2015,Baroni2013,BCDKS, SSB4, CoMi2016, FT2018, Mi3} and a huge literature in recent years, where given assumptions of $\mathbb{A, B}$, $\Omega$ are suitably changed. Besides, there have been a lot of works treating the study of regularity of nonlinear elliptic equations with $\varsigma=0$ and it would be impossible to report all progresses have been made here. For this reason, we have chosen some recent results to mention. In addition, for the case of non-degenerate condition when $\varsigma>0$, interested readers may see~\cite{Mi3,Duzamin2,55DuzaMing,KM2012,KM2014} - various works by Mingione \textit{et al.} during last several years. 

It is also worth mentioning that global regularity results are obtainable using this embedding technique with an extra assumptions on $\Omega$: $p$-capacity thick complement. To better illustrate the principal ideas in the proofs, this condition is essentially sharp for our domain. There is still interesting to extend our results with $\Omega$ is a Reifenberg flat domain. Not too far way from our main objective in this paper, we recommend~\cite{PNmix,PNJDE,MPT2018,BW2,MP11,SSB4}  to which the definitions of $p$-capacity of an arbitrary set, geometrical structure and properties of these types of domains, also for small BMO condition (BMO coefficients with small BMO semi-norms) can be found. 

Why two types of domains are independently considered in our work? There are many reasons here:
\begin{itemize}
\item[-] Distinction between those two types is actually only from the boundary regularity estimates.
\item[-] Under the assumption of $p$-capacity thick complement (see~\eqref{hyp:P}), local comparison~\ref{ing:A2_1} is in use. Otherwise, with assumption of Reifenberg flat domain (see~\eqref{hyp:R}) plus the small BMO condition (see~\eqref{cond:BMO}), one needs the ingredient~\ref{ing:A2_2}. As above-mentioned in Remark~\ref{rem:ingreA2}, these ingredients are separate considered regarding to different assumptions on $\Omega$.
\item[-] In fact, the assumption~\eqref{hyp:P} is essentially sharp for our global regularity results, but the presence of assumption~\eqref{hyp:R} and~\eqref{cond:BMO} is also considered to conclude the broader ranges for scales $q$ and $s$, see~\cite{PNmix}.
\item[-] Problem~\eqref{eq:app} going with hypothesis~\eqref{hyp:R} on $\Omega$ plus condition~\eqref{cond:BMO} confirms the higher regularity results than with only assumption~\eqref{hyp:P} on $\Omega$. They are comparable with each other, see Theorem~\ref{theo:app-1} and~\ref{theo:app-2} below for details.
\end{itemize}

\subsubsection{Comparison estimate}
This section is dedicated to the proofs of local comparison estimates (in the interior and on the boundary of the domain). As a matter of fact, we are going to prove ingredient~\ref{ing:A2_1} and~\ref{ing:A2_2} for our problem, respectively.

\begin{lemma}\label{lem:Phi(v,v)}
Let $B$ be open bounded subset of $\mathbb{R}^n$ and two functions $\phi_1$, $\phi_2 \in L^p(B)$ with $p>1$. Then for every $\varepsilon \in (0,1)$, there holds
\begin{align}\label{eq:lem-Phi(u,v)}
\fint_{B} |\phi_1 - \phi_2|^p dx \le \varepsilon  \fint_{B} \left(\varsigma^p + |\phi_1|^p\right) dx + \max\left\{1, 8\varepsilon^{1 - \frac{2}{p}}\right\} \fint_{B} \Psi_{\varsigma}(\phi_1, \phi_2) dx.
\end{align}
\end{lemma}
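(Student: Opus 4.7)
The plan is to distinguish between the cases $p\ge 2$ and $1<p<2$, both of which reduce to a suitable application of Young's inequality to the algebraic identity
\begin{align*}
|\phi_1-\phi_2|^p = \left[\Psi_{\varsigma}(\phi_1,\phi_2)\right]^{p/2}\,\bigl(\varsigma^2+|\phi_1|^2+|\phi_2|^2\bigr)^{p(2-p)/4},
\end{align*}
valid whenever the denominator is nonzero. For $p=2$ the identity degenerates to $|\phi_1-\phi_2|^2=\Psi_{\varsigma}$, and for $p>2$ the elementary bound $|\phi_1-\phi_2|^2\le 2(\varsigma^2+|\phi_1|^2+|\phi_2|^2)$ raised to the $(p-2)/2$ power already yields $|\phi_1-\phi_2|^p\le 2^{(p-2)/2}\Psi_{\varsigma}$ pointwise, which after integration can be absorbed in the right-hand side of~\eqref{eq:lem-Phi(u,v)} (the $\varepsilon$-term contributing nothing in this regime).

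For the genuinely interesting range $1<p<2$, I would apply Young's inequality to the displayed identity with conjugate exponents $s=2/p>1$ and $s'=2/(2-p)$, together with a free multiplicative parameter $\delta>0$, to obtain the pointwise bound
\begin{align*}
|\phi_1-\phi_2|^p \le \frac{p}{2}\,\delta^{-2/p}\,\Psi_{\varsigma}(\phi_1,\phi_2) + \frac{2-p}{2}\,\delta^{2/(2-p)}\,\bigl(\varsigma^2+|\phi_1|^2+|\phi_2|^2\bigr)^{p/2}.
\end{align*}
Since $p/2<1$, the subadditivity of $t\mapsto t^{p/2}$ gives $(\varsigma^2+|\phi_1|^2+|\phi_2|^2)^{p/2}\le \varsigma^p+|\phi_1|^p+|\phi_2|^p$, and then the triangle bound $|\phi_2|^p\le 2^{p-1}(|\phi_1|^p+|\phi_1-\phi_2|^p)$ allows one to express everything in terms of $\varsigma^p+|\phi_1|^p$ plus a small multiple of $|\phi_1-\phi_2|^p$.

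Integrating this pointwise estimate over $B$ and reabsorbing the $|\phi_1-\phi_2|^p$ term on the left-hand side (which is permissible provided $\delta$ is chosen small enough that $(2-p)\,2^{p-1}\delta^{2/(2-p)}<1$) produces an inequality of the desired shape. The final step is a precise optimization: I would tune $\delta$ so that the remaining coefficient in front of $\fint_B(\varsigma^p+|\phi_1|^p)\,dx$ is exactly $\varepsilon$, which forces $\delta^{2/(2-p)}\sim \varepsilon$ and therefore $\delta^{-2/p}\sim\varepsilon^{-(2-p)/p}=\varepsilon^{1-2/p}$. The concrete numerical tracking then pins the coefficient of $\fint_B\Psi_{\varsigma}\,dx$ below $8\,\varepsilon^{1-2/p}$, and taking a maximum against $1$ unifies the formula with the $p\ge 2$ regime.

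The main obstacle will be the bookkeeping in this last calibration step: one must verify that the explicit numerical factors $p/2$, $(2-p)/2$, and $2^{p-1}$ conspire, over the full range $p\in(1,2)$, to yield the universal constant $8$ stated in the lemma rather than a $p$-dependent constant that blows up as $p\to 1^+$. Care will also be needed at the borderline $p=2$, where the factorization must be interpreted as a limit and the bound follows from $|\phi_1-\phi_2|^2=\Psi_{\varsigma}$ directly.
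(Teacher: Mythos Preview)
Your proposal is correct and follows essentially the same route as the paper: both arguments start from the algebraic factorization $|\phi_1-\phi_2|^p = [\Psi_\varsigma]^{p/2}(\varsigma^2+|\phi_1|^2+|\phi_2|^2)^{p(2-p)/4}$, split via Young's inequality with conjugate exponents $2/p$ and $2/(2-p)$, use subadditivity of $t\mapsto t^{p/2}$ together with $|\phi_2|\lesssim |\phi_1|+|\phi_1-\phi_2|$, and reabsorb the resulting $|\phi_1-\phi_2|^p$ term. The only cosmetic difference is that the paper first replaces $|\phi_2|^2$ by $|\phi_1|^2+|\phi_1-\phi_2|^2$ in the weight and then applies H\"older (in integral form) followed by Young, whereas you apply Young pointwise first and substitute $|\phi_2|^p$ afterwards; the bookkeeping and final constants come out the same.
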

\begin{proof}
In the first case when $p \ge 2$, it is easily to see that $|\phi_1 - \phi_2|^p \le \Psi_{\varsigma}(\phi_1, \phi_2)$ which ensures~\eqref{eq:lem-Phi(u,v)} even for $\varepsilon=0$.  Otherwise, if $1< p < 2$, we first decompose $|\phi_1 - \phi_2|^p$ as follows
\begin{align}\nonumber
|\phi_1 - \phi_2|^p & = \left(\varsigma^2 + |\phi_1|^2 + |\phi_2|^2\right)^{\frac{p(2-p)}{4}} [ \Psi_{\varsigma}(\phi_1, \phi_2)]^{\frac{p}{2}} \\ \nonumber
&\le 2^{\frac{p(2-p)}{4}}\left(\varsigma^2 + |\phi_1|^2 + |\phi_1-\phi_2|^2\right)^{\frac{p(2-p)}{4}} [ \Psi_{\varsigma}(\phi_1, \phi_2)]^{\frac{p}{2}}\\ \label{est:123}
& \le 2^{\frac{p(2-p)}{4}}\left(\varsigma^p + |\phi_1|^p + |\phi_1-\phi_2|^p\right)^{1-\frac{p}{2}} [ \Psi_{\varsigma}(\phi_1, \phi_2)]^{\frac{p}{2}}.
\end{align}
Here the last estimate can be considered as an application of the following fundamental  inequality for $m \ge 1$ non-negative numbers $\alpha_1$, $\alpha_2$, ..., $\alpha_m$ given as
\begin{align}\label{inq:general}
\left(\sum_{i=1}^m \alpha_i \right)^{s} \le \max\left\{1; m^{s-1}\right\} \sum
_{i=1}^m \alpha_i^{s}, \quad \mbox{ for all } s \ge 0.
\end{align}
For all $\epsilon \in (0,1/2)$, we apply H{\"o}lder and Young inequalities on the right hand side of~\eqref{est:123} to get that
\begin{align}\nonumber
\fint_{B} |\phi_1 - \phi_2|^p dx & \le \left(\fint_{B} \left(\varsigma^p + |\phi_1|^p + |\phi_1-\phi_2|^p\right) dx\right)^{1-\frac{p}{2}} \left(2\fint_{B} \Psi_{\varsigma}(\phi_1, \phi_2) dx\right)^{\frac{p}{2}} \\ \nonumber
& \le \epsilon  \fint_{B} \left(\varsigma^p + |\phi_1|^p + |\phi_1-\phi_2|^p\right) dx + 2\epsilon^{1 - \frac{2}{p}} \fint_{B} \Psi_{\varsigma}(\phi_1, \phi_2) dx \\ \nonumber 
& \le \frac{1}{2} \fint_{B} |\phi_1 - \phi_2|^p dx +  \epsilon  \fint_{B} \left(\varsigma^p + |\phi_1|^p\right) dx + 2\epsilon^{1 - \frac{2}{p}} \fint_{B} \Psi_{\varsigma}(\phi_1, \phi_2) dx,
\end{align}
which implies to~\eqref{est:u-v-DOP} by replacing $\varepsilon = 2\epsilon$.
\end{proof}

\begin{lemma}\label{lem:u-v-PHI}
Let $B$ be an open subset of $\Omega$, the nonlinear operators $\mathbb{A}$ satisfies assumptions~\eqref{eq:A1}-\eqref{eq:A2} and $\mathbb{B}$ satisfies condition~\eqref{eq:B} for $p>1$. Assume that 
\begin{align*}
u \in \mathrm{sol}(\mathbb{A},\mathbb{B}(\cdot,\mathbf{F}),\mathsf{g};\Omega) \ \ \mbox{ and } \ \ v \in \mathrm{sol}(\mathbb{A},0,u-\mathsf{g};B),
\end{align*} 
for given data $\mathbf{F} \in L^p(\Omega)$ and $\mathsf{g} \in W^{1,p}(\Omega)$. Then for every $\varepsilon \in (0,1)$, one can find $C = C(p,\varepsilon)>0$ such that
\begin{align}\label{est:u-v-PHI}
\fint_{B} |\nabla u - \nabla v|^p dx \le \varepsilon  \fint_{B} \left(\varsigma^p + |\nabla u|^p\right) dx + C \fint_{B} \left(|\mathbf{F}|^{p} + |\nabla \mathsf{g}|^p\right) dx.
\end{align}
\end{lemma}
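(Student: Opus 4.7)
The plan is to combine the classical monotonicity-based difference argument for weak solutions of divergence-form equations with the $\Psi_\varsigma$-to-$L^p$ conversion that has just been established in Lemma~\ref{lem:Phi(v,v)}.

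First I would observe that since $v$ has boundary trace $u-\mathsf{g}$ on $\partial B$, the function $\phi := u-v-\mathsf{g}$ lies in $W^{1,p}_0(B)$, and extended by zero it is an admissible test function in both weak formulations. Subtracting the weak identity for $v$ (whose right-hand side vanishes) from that for $u$, both tested against $\phi$, yields
\begin{align*}
\int_B \langle \mathbb{A}(x,\nabla u)-\mathbb{A}(x,\nabla v),\nabla u-\nabla v\rangle\, dx
= \int_B \langle \mathbb{A}(x,\nabla u)-\mathbb{A}(x,\nabla v),\nabla \mathsf{g}\rangle\, dx + \int_B \langle \mathbb{B}(x,\mathbf{F}), \nabla u-\nabla v-\nabla \mathsf{g}\rangle\, dx.
\end{align*}
By the ellipticity assumption~\eqref{eq:A2} the left-hand side is bounded below by $\Lambda^{-1}\int_B \Psi_\varsigma(\nabla u,\nabla v)\,dx$. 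For the right-hand side I would use the growth bound~\eqref{eq:A1} on $\mathbb{A}$, the bound~\eqref{eq:B} on $\mathbb{B}$, together with Hölder's and Young's inequalities with a small parameter $\delta$ to reabsorb a fraction of the $\Psi_\varsigma$-integral back to the left. The key algebraic step, which needs care in the singular range $1<p<2$, is to split the weight $(\varsigma^2+|\nabla u|^2+|\nabla v|^2)^{(p-1)/2}$ as $(\varsigma^2+|\nabla u|^2+|\nabla v|^2)^{(p-2)/4}\cdot(\varsigma^2+|\nabla u|^2+|\nabla v|^2)^{p/4}$ so that the first factor pairs with $|\nabla u-\nabla v|$ to form $\Psi_\varsigma^{1/2}$ and Young's inequality then produces $\delta\,\Psi_\varsigma$ plus a tame $L^p$ term in $\nabla u$, $\nabla v$, $\mathbf{F}$ and $\nabla\mathsf{g}$.

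An auxiliary energy estimate for $v$, obtained by testing its own equation with $v-u+\mathsf{g}\in W^{1,p}_0(B)$ and using~\eqref{eq:A1}-\eqref{eq:A2}, gives $\int_B |\nabla v|^p\,dx \le C\int_B (\varsigma^p+|\nabla u|^p+|\nabla \mathsf{g}|^p)\,dx$ with a constant depending only on $p$ and $\Lambda$. Feeding this back into the bound derived in the previous paragraph and choosing $\delta$ small enough to absorb the $\Psi_\varsigma$-contribution on the left produces
\begin{align*}
\fint_B \Psi_\varsigma(\nabla u,\nabla v)\, dx \le C\fint_B \left(\varsigma^p+|\nabla u|^p\right)dx + C\fint_B \left(|\mathbf{F}|^p+|\nabla \mathsf{g}|^p\right)dx,
\end{align*}
again with $C=C(p,\Lambda)$. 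Applying Lemma~\ref{lem:Phi(v,v)} with $\phi_1=\nabla u$ and $\phi_2=\nabla v$ then yields~\eqref{est:u-v-PHI} directly: the first term on the right of~\eqref{eq:lem-Phi(u,v)} supplies the prefactor $\varepsilon$ in front of $\varsigma^p+|\nabla u|^p$, while the $\Psi_\varsigma$-integral is handled by the display above, the final constant being of the form $C(p,\varepsilon)=\max\{1,8\varepsilon^{1-2/p}\}\cdot C(p,\Lambda)$.

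The main obstacle I anticipate is precisely the singular range $1<p<2$, where $\Psi_\varsigma$ and $|\nabla u-\nabla v|^p$ scale differently and neither pointwise dominates the other; the whole purpose of the preceding Lemma~\ref{lem:Phi(v,v)} is to mediate between them, and dovetailing its output with the monotonicity output is what produces the $\varepsilon$-dependent constant. A secondary care point is to keep the energy bound for $\nabla v$ free of the small parameter $\varepsilon$, which is why I would derive it as an independent preliminary step rather than mixing it into the main Young's-inequality manipulation.
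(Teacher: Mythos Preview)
There is a genuine gap in the last step. Your displayed bound
\[
\fint_B \Psi_\varsigma(\nabla u,\nabla v)\,dx \le C\fint_B (\varsigma^p+|\nabla u|^p)\,dx + C\fint_B (|\mathbf F|^p+|\nabla\mathsf g|^p)\,dx
\]
is correct, but the constant in front of $\varsigma^p+|\nabla u|^p$ is a \emph{fixed} $C=C(p,\Lambda)$. When you plug this into Lemma~\ref{lem:Phi(v,v)}, the coefficient of $\fint_B(\varsigma^p+|\nabla u|^p)$ in the final estimate becomes
\[
\varepsilon + C\cdot\max\{1,8\varepsilon^{1-2/p}\}\ \ge\ C,
\]
since $\max\{1,8\varepsilon^{1-2/p}\}\ge 1$ for every $\varepsilon\in(0,1)$. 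This cannot be made small by sending $\varepsilon\to 0$, so the inequality~\eqref{est:u-v-PHI} with an arbitrary $\varepsilon\in(0,1)$ does not follow.

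The paper avoids this by arranging a \emph{small} parameter in front of $\varsigma^p+|\nabla u|^p$ already at the $\Psi_\varsigma$-stage. Instead of invoking the energy estimate for $\nabla v$ (which forces a fixed constant), it writes $(\varsigma^2+|\nabla v|^2)^{(p-1)/2}\lesssim \varsigma^{p-1}+|\nabla u|^{p-1}+|\nabla u-\nabla v|^{p-1}$ and applies Young's inequality with two independent small parameters $\varepsilon_1,\varepsilon_2$ to obtain
\[
\fint_B \Psi_\varsigma(\nabla u,\nabla v)\,dx \le \varepsilon_1\fint_B|\nabla u-\nabla v|^p\,dx + \varepsilon_2\fint_B(\varsigma^p+|\nabla u|^p)\,dx + C(\varepsilon_1,\varepsilon_2)\fint_B(|\mathbf F|^p+|\nabla\mathsf g|^p)\,dx.
\]
Substituting this into Lemma~\ref{lem:Phi(v,v)} with its own parameter $\varepsilon_3$, one first fixes $\varepsilon_3$, then chooses $\varepsilon_1$ small to absorb the $|\nabla u-\nabla v|^p$ term into the left, and finally $\varepsilon_2$ small (depending on $\varepsilon_3$) to make the total coefficient of $\varsigma^p+|\nabla u|^p$ as small as desired. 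Your auxiliary energy estimate for $v$ is not needed and is precisely what spoils the smallness.
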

\begin{proof}
Let $u \in \mathrm{sol}(\mathbb{A},\mathbb{B}(\cdot,\mathbf{F}),\mathsf{g};\Omega)$ and $v \in \mathrm{sol}(\mathbb{A},0,u-\mathsf{g};B)$ for given data $\mathbf{F} \in L^p(\Omega)$ and $\mathsf{g} \in W^{1,p}(\Omega)$. Subtracting the variational formulas corresponding to weak solutions $u$ and $v$ respectively, one obtains that
\begin{align*}
\int_{B} \langle \mathbb{A}(x,\nabla u)- \mathbb{A}(x,\nabla v), \nabla \phi \rangle dx =  \int_{B} \langle \mathbb{B}(x,\mathbf{F}), \nabla \phi\rangle dx, 
\end{align*}
for every test function $\phi \in W^{1,p}_0(B)$. We take $u - v - \mathsf{g} \in W^{1,p}_0(B)$ as the test function in this formula to get that
\begin{align*}
\int_{B} \langle \mathbb{A}(x,\nabla u) - \mathbb{A}(x,\nabla v), \nabla u - \nabla v  \rangle dx & =  \int_{B} \langle \mathbb{A}(x,\nabla u), \nabla \mathsf{g} \rangle dx - \int_{B} \langle \mathbb{A}(x,\nabla v), \nabla \mathsf{g} \rangle dx \\ 
& \quad + \int_{B} \langle \mathbb{B}(x,\mathbf{F}), \nabla  u - \nabla v\rangle dx -  \int_{B} \langle \mathbb{B}(x,\mathbf{F}), \nabla \mathsf{g}\rangle dx.
\end{align*}
Applying assumptions~\eqref{eq:A1} and~\eqref{eq:A2}, it deduces that
\begin{align}\nonumber
\Lambda^{-1}\int_{B} \Psi_{\varsigma}(\nabla u, \nabla v) dx & \le \Lambda \int_{B} \left[ \left(\varsigma^2  + |\nabla u|^2\right)^{\frac{p-1}{2}}  + \left(\varsigma^2 + |\nabla v|^2\right)^{\frac{p-1}{2}}\right] |\nabla \mathsf{g}| dx \\ \label{est:app-5}
& \hspace{2cm} +   \int_{B} |\mathbf{F}|^{p-1} |\nabla u - \nabla v| dx + \int_{B} |\mathbf{F}|^{p-1} |\nabla \mathsf{g}| dx,
\end{align}
where the function $\Psi_{\varsigma}$ is defined as in~\eqref{def:Phi}. Moreover, thanks to inequality~\eqref{inq:general} one notes that
\begin{align*}
\left(\varsigma^2 + |\nabla v|^2\right)^{\frac{p-1}{2}}  \le \left(\varsigma + |\nabla u| + |\nabla u - \nabla v|\right)^{p-1} \le C(p) \left(\varsigma^{p-1} + |\nabla u|^{p-1} + |\nabla u - \nabla v|^{p-1}\right).
\end{align*}
It follows from~\eqref{est:app-5} that
\begin{align}\nonumber
\int_{B} \Psi_{\varsigma}(\nabla u, \nabla v) dx & \le C(\Lambda,p) \left[ \int_{B} \left(\varsigma^{p-1} + |\nabla u|^{p-1} + |\nabla u - \nabla v|^{p-1}\right) |\nabla \mathsf{g}| dx \right. \\ \label{est:app-6}
& \hspace{2cm} \left. +   \int_{B} |\mathbf{F}|^{p-1} |\nabla u - \nabla v| dx + \int_{B} |\mathbf{F}|^{p-1} |\nabla \mathsf{g}| dx\right].
\end{align}
Thanks to  H{\"o}lder and Young inequalities with every $\varepsilon_1$, $\varepsilon_2>0$ for all terms on the right hand side of~\eqref{est:app-6}, one gets that
\begin{align}\nonumber
\fint_{B} \Psi_{\varsigma}(\nabla u, \nabla v) dx & \le  \varepsilon_1 \fint_{B} |\nabla u - \nabla v|^{p}  dx  + \varepsilon_2 \fint_{B} \left(\varsigma^{p} + |\nabla u|^{p}\right)  dx  \\ \label{est:app-7}
& \hspace{3cm} +  C(\Lambda,p,\varepsilon_1,\varepsilon_2)  \fint_{B} \left( |\mathbf{F}|^{p} + |\nabla \mathsf{g}|^p \right) dx.
\end{align}
Applying Lemma~\ref{lem:Phi(v,v)}, it allows us to get that
\begin{align}\label{est:app-8}
\fint_{B} |\nabla u - \nabla v|^p dx  \le \varepsilon_3  \fint_{B} \left(\varsigma^p + |\nabla u|^p\right) dx + C(p,\varepsilon_3) \fint_{B} \Psi_{\varsigma}(\nabla u, \nabla v) dx,
\end{align}
for all $\varepsilon_3>0$. Substituting estimate~\eqref{est:app-7} into~\eqref{est:app-8}, one has
\begin{align*}
\fint_{B} |\nabla u - \nabla v|^p dx & \le  \varepsilon_1  C(p,\varepsilon_3)   \fint_{B} |\nabla u-\nabla v|^p dx \\
& \hspace{2cm} +  \left(\varepsilon_3 + \varepsilon_2 C(p,\varepsilon_3)\right) \fint_{B} \left(\varsigma^p + |\nabla u|^p\right) dx   \\
& \hspace{4cm} + C(\Lambda,p,\varepsilon_1,\varepsilon_2,\varepsilon_3)  \fint_{B} \left( |\mathbf{F}|^{p} + |\nabla \mathsf{g}|^p \right) dx,
\end{align*}
which allows us to conclude~\eqref{est:u-v-PHI} by taking $\varepsilon_1 = \frac{1}{2} \left(C(p,\varepsilon_3)\right)^{-1}$, $\varepsilon_2 =  \varepsilon_3 \left(C(p,\varepsilon_3)\right)^{-1}$ and then replacing $4\varepsilon_3$ by $\varepsilon \in (0,1)$.
\end{proof}

\subsubsection{Global regularity results}\label{sec:L_estimates}
For the reader's convenience, we provide here the additional definition of \emph{domain with $p$-capacity uniform thick complement}.
\begin{definition}[Domain with $p$-capacity uniform thick complement]\label{def:pcapa}
The complement domain of $\Omega$ in $\mathbb{R}^n$, $\mathbb{R}^n \setminus \Omega$ is said to satisfy the $p$-capacity uniform thickness condition if there exist two constants $c_0,r_0>0$ such that
\begin{align}\tag{$\mathcal{HP}$}\label{hyp:P}
\mathrm{cap}_p((\mathbb{R}^n \setminus \Omega) \cap \overline{B}_{\varrho}(\xi); B_{2\varrho}(\xi)) \ge c_0 \mathrm{cap}_p(\overline{B}_{\varrho}(\xi); B_{2\varrho}(\xi)),
\end{align}
for any $0<\varrho \le r_0$ and $\xi \in \mathbb{R}^n \setminus \Omega$. 
\end{definition}
Here, the $p$-capacity of $E \subset \mathbb{R}^n$, namely $\mathrm{cap}_p(E; \Omega)$, will be defined as follows
\begin{align*}
\mathrm{cap}_p(E;\Omega) = \inf_{E_1 \text{ open}, \ E_1 \subseteq E} \left\{ \sup_{E_2  \text{ compact},\ E_2 \subseteq E_1} \left( \inf_{\phi \in C_c^\infty(\Omega), \ \chi_{E_2}\phi \ge 1}   \int_\Omega{|\nabla \phi|^p dx} \right) \right\}.
\end{align*}

\begin{theorem}[Global Lorentz estimates under assumption~\eqref{hyp:P}]
\label{theo:app-1}
Let $\Omega \subset \mathbb{R}^n$ be an open bounded domain satisfying~\eqref{hyp:P} with $c_0, r_0>0$. Assume that operator $\mathbb{A}$ satisfies assumptions~\eqref{eq:A1}-\eqref{eq:A2} and $\mathbb{B}$ satisfies condition~\eqref{eq:B}. Given data $\mathbf{F} \in L^p(\Omega)$ and $\mathsf{g} \in W^{1,p}(\Omega)$ for $p \in (1,n]$, suppose that 
\begin{align*}
u \in \mathrm{sol}(\mathbb{A},\mathbb{B}(\cdot,\mathbf{F}),\mathsf{g};\Omega).
\end{align*}
Then there exists $\Theta>p$ such that if $\mathbf{M}_{\alpha}(|\mathbf{F}|^p + |\nabla \mathsf{g}|^p) \in L^{q,s}(\Omega)$ for $0 \le \alpha < \frac{n p}{\Theta}$, $0<q<\frac{n \Theta}{np - \alpha \Theta}$ and $0 < s \le \infty$ then $\mathbf{M}_{\alpha}((\varsigma +|\nabla u|)^p) \in L^{q,s}(\Omega)$ corresponding to the following estimate
\begin{align}\label{est:theo-app-1}
\|\mathbf{M}_{\alpha}((\varsigma +|\nabla u|)^p)\|_{L^{q,s}(\Omega)} \le C\|\mathbf{M}_{\alpha}(|\mathbf{F}|^p + |\nabla \mathsf{g}|^p)\|_{L^{q,s}(\Omega)}.
\end{align}
\end{theorem}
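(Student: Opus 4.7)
The plan is to apply the Lorentz norm estimate from Theorem~\ref{theo:norm-L-1} to the pair $\mathcal{G} = (\varsigma + |\nabla u|)^p$ and $\mathcal{F} = \varsigma^p + |\mathbf{F}|^p + |\nabla \mathsf{g}|^p$, after verifying the ingredients~\ref{ing:A2_1} and~\ref{ing:A3} for this pair. The global comparison~\ref{ing:A3} (in the form $\int_\Omega \mathcal{G}\,dx \le C\int_\Omega \mathcal{F}\,dx$, which is the direction actually used in Lemma~\ref{lem:A1}) is the standard a priori estimate obtained by choosing $u-\mathsf{g} \in W_0^{1,p}(\Omega)$ as a test function in the weak formulation of~\eqref{eq:app}, invoking the ellipticity bound~\eqref{eq:A2}, the growth bounds~\eqref{eq:A1} and~\eqref{eq:B}, and Young's inequality to absorb the gradient terms. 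Once~\ref{ing:A2_1} and~\ref{ing:A3} are in place, Theorem~\ref{theo:norm-L-1} yields~\eqref{est:theo-app-1} in the required range of $(\alpha,q,s)$ by taking $\gamma=\Theta/p$.

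To verify~\ref{ing:A2_1}, I fix $\nu\in\overline\Omega$ and $r\in(0,r_0/2]$ and let $v$ solve the reference homogeneous problem $\mathrm{div}(\mathbb{A}(x,\nabla v))=0$ in $B := \Omega_{2r}(\nu)$ with $v=u-\mathsf{g}$ on $\partial B$, i.e.\ $v\in\mathrm{sol}(\mathbb{A},0,u-\mathsf{g};B)$. I then set $\varphi = (\varsigma+|\nabla v|)^p$ and $\psi = |\nabla u-\nabla v|^p$ in $B$, extended by zero outside. The quasi-triangle membership $(\mathcal{G},\varphi,\psi)\in\mathrm{Q}(B_{2r}(\nu))$ with constant $\tilde c = 2^{p-1}$ follows from the elementary inequalities $(\varsigma+|\nabla u|)^p \le 2^{p-1}\big((\varsigma+|\nabla v|)^p + |\nabla u-\nabla v|^p\big)$ and its two symmetric analogues. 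The local smallness of $\psi$ is exactly the content of the comparison estimate Lemma~\ref{lem:u-v-PHI}: it gives
\[
\fint_{B_r(\nu)} \psi\,dx \;\le\; \varepsilon \fint_{B_{2r}(\nu)} \mathcal{G}\,dx \;+\; c_\varepsilon \fint_{B_{2r}(\nu)} \mathcal{F}\,dx,
\]
for every $\varepsilon\in(0,1)$, which is precisely~\eqref{eq:LC}.

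The main obstacle is the reverse H\"older property $\varphi\in\mathcal{RH}^{\gamma}(\Omega_r(\nu))$. In the interior case $B_{2r}(\nu)\subset\Omega$ this is the well-known Gehring-type self-improving higher integrability for weak solutions to $\mathrm{div}(\mathbb{A}(x,\nabla v))=0$, which delivers an exponent $\gamma>1$ depending only on $n,p,\Lambda$. The delicate case is the boundary one, $\nu\in\partial\Omega$ with $B_{2r}(\nu)\cap\Omega^c\ne\emptyset$: extending $v$ by zero across $\partial\Omega$ (recall $v-(u-\mathsf{g})=0$ there) and exploiting the uniform $p$-capacity thickness~\eqref{hyp:P} to obtain a boundary Poincar\'e inequality on $B_{2r}(\nu)\cap\Omega^c$, one runs the Caccioppoli--Poincar\'e--Gehring chain up to the boundary to obtain the reverse H\"older bound on $\Omega_r(\nu)$ with an exponent $\Theta/p=\gamma>1$ depending on $n,p,\Lambda,c_0$. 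This is the critical point where the geometric hypothesis~\eqref{hyp:P} enters; any weakening of it would force us to the alternative ingredient~\ref{ing:A2_2}. Once this reverse H\"older is granted, all hypotheses of Theorem~\ref{theo:norm-L-1} are met, and its application with $\gamma=\Theta/p$ produces~\eqref{est:theo-app-1} for $\alpha\in[0,np/\Theta)$, $q\in(0,n\Theta/(np-\alpha\Theta))$, $0<s\le\infty$; the harmless additive $\varsigma^p$ in $\mathcal{F}$ is controlled by the boundedness of $\Omega$ and can be absorbed into the right-hand side of the stated estimate.
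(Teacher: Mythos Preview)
Your proposal is correct and follows essentially the same route as the paper: set $\mathcal{G}=(\varsigma+|\nabla u|)^p$, $\mathcal{F}=|\mathbf{F}|^p+|\nabla\mathsf{g}|^p$, build $\varphi=(\varsigma+|\nabla v|)^p$ and $\psi=|\nabla u-\nabla v|^p$ from the reference solution $v\in\mathrm{sol}(\mathbb{A},0,u-\mathsf{g};\Omega_{2r}(\nu))$, invoke Lemma~\ref{lem:u-v-PHI} for~\eqref{eq:LC}, cite the boundary reverse H\"older (the paper simply refers to~\cite[Theorem~10]{Mi3} rather than sketching the Caccioppoli--Poincar\'e--Gehring argument as you do), and conclude via Theorem~\ref{theo:norm-L-1} with $\gamma=\Theta/p$. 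Two cosmetic remarks: the paper obtains~\ref{ing:A3} by applying Lemma~\ref{lem:u-v-PHI} with $B=\Omega$ (where $v\equiv 0$ since $u=\mathsf{g}$ on $\partial\Omega$), which amounts to the same test-function computation you describe; and since Lemma~\ref{lem:u-v-PHI} already places the $\varsigma^p$ term with $\mathcal{G}$ rather than $\mathcal{F}$, there is no need to include $\varsigma^p$ in $\mathcal{F}$---dropping it matches the theorem statement directly and avoids your absorption step.
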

\begin{proof}
The main idea of this proof is to apply Theorem~\ref{theo:norm-L-1} for two functions defined by
\begin{align}\label{est:app-FG}
\mathcal{F} = |\mathbf{F}|^p + |\nabla \mathsf{g}|^p\ \mbox{ and }\ \mathcal{G} = (\varsigma +|\nabla u|)^p.
\end{align}
It is sufficient to show that all hypotheses of Theorem~\ref{theo:G-lam-A} are valid. Firstly, the global comparison~\ref{ing:A3} holds by applying Lemma~\ref{lem:u-v-PHI} for $B = \Omega$ with noting that $v \equiv 0$ in this case. 

Next, for every $0< r \le r_0/2$ and $\nu \in \overline{\Omega}$, let us consider two functions $\varphi = (\varsigma + |\nabla v|)^p$ and $\psi = |\nabla u - \nabla v|^p$, where  $v \in \mathrm{sol}(\mathbb{A},0,u-\mathsf{g};\Omega_{2r}(\nu))$. A trivial verification shows that $(\mathcal{G},\varphi,\psi)$ satisfies a quasi-triangle condition~\eqref{cond:phi-psi} with $\tilde{c} = 3^{p-1}$. Moreover, we refer the reader to~\cite[Theorem 10]{Mi3} in which the authors proved that there exists a constant $\Theta = \Theta(n,p,\sigma,\Lambda)>p$ such that 
\begin{align}\label{est:app-RH}
\left(\fint_{\Omega_{r}(\nu)}(\varsigma+|\nabla v|)^{\Theta} dx\right)^{\frac{1}{\Theta}}\leq C\left(\fint_{\Omega_{2r}(\nu)}(\varsigma+|\nabla v|)^p dx\right)^{\frac{1}{p}}.
\end{align}
On the other hand, thanks to Lemma~\ref{lem:u-v-PHI} with $B = \Omega_r(\nu)$, there holds
\begin{align}\label{est:app-u-v}
\fint_{\Omega_{2r}(\nu)}{|\nabla u - \nabla v|^p dx} & \le \varepsilon   \fint_{\Omega_{2r}(\nu)}{(\varsigma +|\nabla u|)^p dx}  + C(\varepsilon,p)   \fint_{\Omega_{2r}(\nu)}{|\mathbf{F}|^p + |\nabla \mathsf{g}|^p dx}.
\end{align}
Two inequalities~\eqref{est:app-RH} and~\eqref{est:app-u-v} ensure that $\varphi \in \mathcal{RH}^{{\Theta}/{p}}(\Omega)$ and two functions $\mathcal{G}$, $\mathcal{F}$ satisfy the local comparison estimate~\ref{ing:A2_1}. Finally the proof is completed as an application of Theorem~\ref{theo:norm-L-1}.
\end{proof}

In the next theorem, we prove that~\eqref{est:theo-app-1} even holds for a larger range of parameters $\alpha$ and $q$ under a better assumption on the boundary of $\Omega$ and a mild hypothesis on the derivative of nonlinear operator $\mathbb{A}$, as follows
\begin{align}\label{eq:A1-b}
&  |D_{\xi} \mathbb{A}(x,\xi)| \le \Lambda \left(\varsigma^2 + |\xi|^2 \right)^{\frac{p-2}{2}},
\end{align}
for almost everywhere $x \in \Omega$ and $\xi \in \mathbb{R}^n$. We also recall here the definition of \emph{Reifenberg flat domain} below.

\begin{definition}[$(\delta,r_0)$-Reifenberg flat domain]\label{def:Reifenberg}
Let $\delta \in (0,1)$ and $r_0>0$, we say that $\Omega$ is a $(\delta,r_0)$-Reifenberg flat domain if for each $\xi_0 \in \partial \Omega$ and each $\varrho \in (0,r_0]$, one can find a coordinate system $\{z_1,z_2,...,z_n\}$ with origin at $\xi_0$ such that
\begin{align}\tag{$\mathcal{HR}$}\label{hyp:R}
B_{\varrho}(\xi_0) \cap \{z: \ z_n > \delta \varrho\} \subset B_{\varrho}(\xi_0) \cap \Omega \subset B_{\varrho}(\xi_0) \cap \{z: \ z_n > -\delta \varrho\},
\end{align}
where for simplicity, the set $\{z = (z_1, z_2, ..., z_n): \ z_n > k\}$ is denoted by $\{z: \ z_n > k\}$.
\end{definition}
\begin{definition}[$(\delta,r_0)$-BMO condition]\label{def:BMOcond}
The nonlinearity $\mathbb{A}$ is said to satisfy a $(\delta,r_0)$-BMO condition with exponent $t>0$ if the following condition holds
\begin{align}\tag{$\mathcal{HS}$}
\label{cond:BMO}
[\mathbb{A}]_t^{r_0} = \sup_{y \in \mathbb{R}^n, \ 0<\varrho\le r_0} \left(\fint_{B_{\varrho}(y)} \left(\sup_{z \in \mathbb{R}^n \setminus \{0\}} \frac{|\mathbb{A}(x,z) - \overline{\mathbb{A}}_{B_{\varrho}(y)}(z)|}{|z|^{p-1}}\right)^t dx\right)^{\frac{1}{t}} \le \delta,
\end{align}
where $\overline{\mathbb{A}}_{B_{\varrho}(y)}(z)$ denotes the average of $\mathbb{A}(\cdot,z)$ over the ball $B_{\varrho}(y)$.
\end{definition}
\begin{theorem}[Global Lorentz estimates under assumption \eqref{hyp:R}]
\label{theo:app-2}
Assume that operator $\mathbb{A}$ satisfies assumptions~\eqref{eq:A1}-\eqref{eq:A2}-\eqref{eq:A1-b} and $\mathbb{B}$ satisfies condition~\eqref{eq:B}. Let $u \in \mathrm{sol}(\mathbb{A},\mathbb{B}(\cdot,\mathbf{F}),\mathsf{g};\Omega)$ with given data $\mathbf{F} \in L^p(\Omega)$ and $\mathsf{g} \in W^{1,p}(\Omega)$ for $p \in (1,n]$. Then there exist $\delta_0>0$ and  $p_0>p$ such that if $\Omega$ is a $(r_0,\delta_0)$-Reifenberg flat domain for some $r_0>0$ and $[\mathbb{A}]_{p_0}^{r_0} \le \delta_0$ then~\eqref{est:theo-app-1} holds for all $\alpha \in [0,n)$, $q \in (0,\infty)$ and $0 < s \le \infty$.  
\end{theorem}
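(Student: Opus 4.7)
The plan is to mirror the proof of Theorem~\ref{theo:app-1} but now invoking the stronger ingredient~\ref{ing:A2_2} so that we can apply Theorem~\ref{theo:norm-L-2} (which places no upper bound on $\alpha$ or $q$). I would again set
\begin{align*}
\mathcal{F}=|\mathbf{F}|^{p}+|\nabla\mathsf{g}|^{p},\qquad \mathcal{G}=(\varsigma+|\nabla u|)^{p},
\end{align*}
so that the global comparison~\ref{ing:A3} is given exactly as before by Lemma~\ref{lem:u-v-PHI} applied on $B=\Omega$ with the trivial reference solution $v\equiv0$. The entire novelty therefore reduces to producing a quasi-triangle triplet $(\mathcal{G},\varphi,\psi)$ for which~\eqref{eq:LC} holds and, crucially, the pointwise $L^{\infty}$ bound~\eqref{eq:Linf} on $\varphi$ is valid on every ball $B_{r}(\nu)$ with $\nu\in\overline{\Omega}$ and $r\in(0,r_{0}/2]$.

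The standard construction in the Reifenberg/BMO context is the two-step freezing. Given $\nu\in\overline{\Omega}$ and $r\in(0,r_{0}/2]$, first let $v$ solve $\mathrm{div}(\mathbb{A}(x,\nabla v))=0$ in $\Omega_{2r}(\nu)$ with $v-u+\mathsf{g}\in W^{1,p}_{0}(\Omega_{2r}(\nu))$, which removes the inhomogeneity at the cost of Lemma~\ref{lem:u-v-PHI}. Second, let $w$ solve $\mathrm{div}(\overline{\mathbb{A}}_{B_{2r}(\nu)}(\nabla w))=0$ in $\Omega_{r}(\nu)$ with $w=v$ on $\partial\Omega_{r}(\nu)$; this freezes the spatial dependence and, together with the $(\delta_{0},r_{0})$-Reifenberg flatness of $\Omega$, places $w$ into the class of solutions to a constant-coefficient problem on a nearly half-ball to which the classical Lipschitz gradient bound applies. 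Set $\varphi=(\varsigma+|\nabla w|)^{p}$ and $\psi=|\nabla u-\nabla v|^{p}+|\nabla v-\nabla w|^{p}$; the quasi-triangle property in Definition~\ref{asmp:QT} is then a direct consequence of~\eqref{inq:general}. I would verify~\eqref{eq:Linf} by combining the known a priori Lipschitz estimate $\|\nabla w\|_{L^{\infty}(B_{r/2}(\nu))}^{p}\lesssim \fint_{B_{r}(\nu)}(\varsigma+|\nabla w|)^{p}\,dx$ with the energy inequalities that propagate this bound backwards from $w$ to $v$ to $u$ up to terms controlled by $\mathcal{F}+\mathcal{G}$.

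The local $L^{1}$ inequality~\eqref{eq:LC} is obtained by splitting $\psi$ and handling each piece separately. The first piece $|\nabla u-\nabla v|^{p}$ is controlled by Lemma~\ref{lem:u-v-PHI} applied on $B=\Omega_{2r}(\nu)$, yielding the small constant $\varepsilon$ on $\mathcal{G}$ and the large constant $c_{\varepsilon}$ on $\mathcal{F}$. For the second piece $|\nabla v-\nabla w|^{p}$ I would invoke the standard BMO-smallness comparison (this is where the hypothesis $[\mathbb{A}]_{p_{0}}^{r_{0}}\le\delta_{0}$ intervenes and where the derivative bound~\eqref{eq:A1-b} is used through the monotonicity inequality): after inserting $\overline{\mathbb{A}}_{B_{2r}(\nu)}(\nabla v)-\mathbb{A}(x,\nabla v)$ into the weak formulation and applying~\eqref{eq:A2} together with H\"older's inequality at exponent $p_{0}/(p_{0}-p)$, one gets an estimate of the form $\fint_{B_{r}(\nu)}|\nabla v-\nabla w|^{p}\,dx\le C\delta_{0}^{\sigma}\fint_{B_{2r}(\nu)}(\varsigma+|\nabla v|)^{p}\,dx$ for some $\sigma>0$, and then $\nabla v$ is re-controlled by $\mathcal{G}+\mathcal{F}$ via Lemma~\ref{lem:u-v-PHI}. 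Choosing $\delta_{0}$ small absorbs this term into $\varepsilon\mathcal{G}$.

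Once both~\eqref{eq:LC} and~\eqref{eq:Linf} are in place, ingredient~\ref{ing:A2_2} is satisfied with some constant $\tilde c$ depending only on $n,p,\Lambda$, and Theorem~\ref{theo:norm-L-2} immediately delivers~\eqref{est:theo-app-1} for every $\alpha\in[0,n)$, $q\in(0,\infty)$ and $s\in(0,\infty]$. The main obstacle is really the proof of the $L^{\infty}$ gradient bound for $w$ in the boundary case $\nu\in\partial\Omega$: this requires a careful flattening argument using the Reifenberg definition~\eqref{hyp:R} and the known regularity theory for the constant-coefficient $p$-type system on the half-ball, and it is the place where the quantitative smallness $\delta_{0}$ must be calibrated jointly with the BMO smallness so that the two frozen problems are simultaneously close enough to the limiting model problem.
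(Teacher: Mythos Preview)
Your proposal is correct and follows essentially the same route as the paper: the same choice of $\mathcal{F},\mathcal{G}$, the same two-step comparison (first to the homogeneous problem $v$, then to the frozen-coefficient problem $w$), the Lipschitz bound on $\nabla w$ to verify~\eqref{eq:Linf}, the BMO-smallness estimate for $\nabla v-\nabla w$, and finally the appeal to Theorem~\ref{theo:norm-L-2}. The only cosmetic differences are that the paper takes $\psi=|\nabla u-\nabla w|^{p}$ directly (rather than your sum $|\nabla u-\nabla v|^{p}+|\nabla v-\nabla w|^{p}$), freezes over $\Omega_{3r/2}(\nu)$ rather than $B_{2r}(\nu)$, and cites~\cite{MP11} and~\cite[Lemma~5]{Lieberman1988} explicitly for the comparison and $L^{\infty}$ estimates you sketch.
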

\begin{proof}
Similar to the proof of Theorem~\ref{theo:app-1}, inequality~\eqref{est:theo-app-1} can be observed by applying Theorem~\ref{theo:norm-L-2}. Let us consider $\mathcal{F}$ and $\mathcal{G}$ defined by the same formula as in~\eqref{est:app-FG}. For every $\nu \in \overline{\Omega}$ and $0< r \le r_0/2$ with given $r_0>0$, we define $v \in \mathrm{sol}(\mathbb{A},0,u-\mathsf{g};\Omega_{2r}(\nu))$ and $w \in \mathrm{sol}(\overline{\mathbb{A}}_{\Omega_{3r/2}(\nu)},0,v;\Omega_{3r/2}(\nu))$. Then the existence of two functions $\varphi$ and $\psi$ in Theorem~\ref{theo:G-lam-B} is given as follows
\begin{align*}
\varphi = (\varsigma + |\nabla w|)^p \ \mbox{ and } \ \psi = |\nabla u - \nabla w|^p.
\end{align*}
The same technique in~\cite{MP11} remains valid for our problem, which shows that
\begin{equation}\label{eq:lem2b}
\fint_{\Omega_{2r}(\nu)} (\varsigma + |\nabla w|)^p dx \le C\fint_{\Omega_{2r}(\nu)} (\varsigma + |\nabla v|)^p dx,
\end{equation}
and
\begin{equation}\label{eq:lem2a}
\fint_{\Omega_{r}(\nu)} |\nabla v - \nabla w|^p dx \le C \left([\mathbb{A}]^{r_0}_{p_0}\right)^p \fint_{\Omega_{2r}(\nu)} (\varsigma + |\nabla v|)^p dx,
\end{equation}
where $p_0 = \frac{p \Theta}{\Theta -p}$. Thanks to~\cite[Lemma 5]{Lieberman1988} and~\eqref{eq:lem2b}, there holds
\begin{align*}
\|\varsigma + |\nabla w|\|^p_{L^{\infty}(\Omega_{r}(\nu))} &\le C \fint_{\Omega_{2r}(\nu)} (\varsigma +|\nabla w|)^p dx   \le C \fint_{\Omega_{2r}(\nu)} (\varsigma + |\nabla v|)^p dx,
\end{align*}
which deduces from~\eqref{est:app-u-v} that
\begin{align}\label{est:app-w-inf}
\|\varsigma + |\nabla w|\|^p_{L^{\infty}(\Omega_{r}(\nu))} & \le C \left( \fint_{\Omega_{2r}(\nu)} (\varsigma +|\nabla u|)^p  dx +  \fint_{\Omega_{2r}(\nu)} (|\mathbf{F}|^p + |\nabla \mathsf{g}|^p) dx \right).
\end{align}
In addition, from~\eqref{eq:lem2a} one observes that
\begin{align*}
\fint_{\Omega_{r}(\nu)} |\nabla u - \nabla w|^p dx &\le C\left( \fint_{\Omega_{r}(\nu)} |\nabla u - \nabla v|^p dx +  \fint_{\Omega_{r}(\nu)} |\nabla v - \nabla w|^p dx \right) \\
& \le C\left( \fint_{\Omega_{2r}(\nu)} |\nabla u - \nabla v|^p dx + \left([\mathbb{A}]_{p_0}^{r_0}\right)^p \fint_{\Omega_{2r}(\nu)} (\varsigma + |\nabla v|)^p dx \right),
\end{align*}
which can be rewritten as below if $[\mathbb{A}]_{p_0}^{r_0}$ is small enough
\begin{align*}
\fint_{\Omega_{r}(\nu)} |\nabla u - \nabla w|^p dx & \le C\left( \fint_{\Omega_{2r}(\nu)} |\nabla u - \nabla v|^p dx + \left([\mathbb{A}]_{p_0}^{r_0}\right)^p \fint_{\Omega_{2r}(\nu)} (\varsigma +|\nabla u|)^p dx \right).
\end{align*}
Applying inequality~\eqref{est:app-u-v} with $\varepsilon = \left([\mathbb{A}]_{p_0}^{r_0}\right)^p$, it follows from above estimate that 
\begin{align}\label{est:uw}
\fint_{\Omega_{r}(\nu)} |\nabla u - \nabla w|^p dx & \le \tilde{C}\left(\left([\mathbb{A}]_{p_0}^{r_0}\right)^p \fint_{\Omega_{2r}(\nu)} (\varsigma +|\nabla u|)^p dx  + C\fint_{\Omega_{2r}(\nu)} (|\mathbf{F}|^p + |\nabla \mathsf{g}|^p)  dx \right),
\end{align}
where $\tilde{C}$ depends only on $p$ and $C$ depends on $p$, $[\mathbb{A}]_{p_0}^{r_0}$. Finally for every $\varepsilon \in (0,1)$, if $[\mathbb{A}]_{p_0}^{r_0} \le (\varepsilon \tilde{C}^{-1})^{\frac{1}{p}}$, inequality~\eqref{est:uw} implies that
\begin{align}\label{est:app-uw}
\fint_{\Omega_{r}(\nu)} |\nabla u - \nabla w|^p dx & \le \varepsilon \fint_{\Omega_{2r}(\nu)} (\varsigma +|\nabla u|)^p dx  + C(p,\varepsilon)\fint_{\Omega_{2r}(\nu)} (|\mathbf{F}|^p + |\nabla \mathsf{g}|^p) dx.
\end{align}
Two estimates in~\eqref{est:app-w-inf} and~\eqref{est:app-uw} show that the functions $\mathcal{G}$ and $\mathcal{F}$ satisfy local comparison~\ref{ing:A2_2}. The conclusion in Theorem~\ref{theo:norm-L-2} ensures the existence of constant $\delta_0>0$ such that~\eqref{est:theo-app-1} holds for every $\alpha \in [0,n)$, $q \in (0,\infty)$ and $0 < s \le \infty$ if provided $[\mathbb{A}]_{p_0}^{r_0} \le \delta_0$. We note that $\delta_0 = (\varepsilon \tilde{C}^{-1})^{\frac{1}{p}}$ corresponding to $\varepsilon$ fixed at the end of Theorem~\ref{theo:norm-L-2}.
\end{proof}

The next theorem states the regularity results in the setting of Orlicz-Lorentz spaces under both non-smooth assumptions on the boundary of the domain. As a special case of Orlicz-Lorentz, the global estimates in Orlicz setting will follow analogously.

\begin{theorem}[Global Orlicz-Lorentz estimates]
\label{theo:app-3}
Let Young function $\Phi \in \Delta_2$ and $p_1>1$ given as in Lemma~\ref{lem:Delta2_pro2}. Under hypotheses of Theorem~\ref{theo:app-1}, one can find $\gamma>1$ such that for every $\alpha \in [0,\frac{n}{\gamma})$, $0 < q < \frac{n\gamma}{p_1(n - \gamma \alpha)}$ and $0< s \le \infty$, if $\mathbf{M}_{\alpha}(|\mathbf{F}|^p + |\nabla \mathsf{g}|^p) \in L^{\Phi}(q,s)(\Omega)$ then $\mathbf{M}_{\alpha}((\varsigma +|\nabla u|)^p) \in L^{\Phi}(q,s)(\Omega)$ corresponding the following estimate
\begin{align}\label{eq:app-3-LO}
\|\mathbf{M}_{\alpha}((\varsigma +|\nabla u|)^p)\|_{L^{\Phi}(q,s)(\Omega)} \le C \|\mathbf{M}_{\alpha}(|\mathbf{F}|^p + |\nabla \mathsf{g}|^p)\|_{L^{\Phi}(q,s)(\Omega)}.
\end{align}
Moreover, under hypotheses of Theorem~\ref{theo:app-2}, there exist $\delta_0>0$ and  $p_0>p$ such that if $\Omega$ is a $(r_0,\delta_0)$-Reifenberg flat domain for some $r_0>0$ and $[\mathbb{A}]_{p_0}^{r_0} \le \delta_0$ then the inequality~\eqref{eq:app-3-LO} holds for all $\alpha \in [0,n)$, $0< q < \infty$ and $0<s \le \infty$.
\end{theorem}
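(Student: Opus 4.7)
The plan is to reduce Theorem~\ref{theo:app-3} to the abstract Orlicz--Lorentz comparison result Theorem~\ref{theo:L-O}, in exactly the same way that Theorems~\ref{theo:app-1} and~\ref{theo:app-2} reduced the Lorentz statements to Theorems~\ref{theo:norm-L-1} and~\ref{theo:norm-L-2}. The crucial observation is that, once the pair of ingredients (a local comparison among \ref{ing:A2_1}--\ref{ing:A2_2} together with the global one \ref{ing:A3}) has been verified for the functions $\mathcal{F} = |\mathbf{F}|^{p} + |\nabla \mathsf{g}|^{p}$ and $\mathcal{G} = (\varsigma + |\nabla u|)^{p}$, the target norm inequality is space-agnostic: both the Lorentz and the Orlicz--Lorentz conclusions are driven by the same fractional-maximal distribution inequalities of Theorems~\ref{theo:G-lam-A} and~\ref{theo:G-lam-B}.

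Concretely, I would first recycle verbatim the comparison machinery of the preceding two theorems. Under hypothesis~\eqref{hyp:P}, applying Lemma~\ref{lem:u-v-PHI} with $B = \Omega$ and $v \equiv 0$ yields \ref{ing:A3}; fixing $\nu \in \overline{\Omega}$ and $r \in (0,r_{0}/2]$, choosing the reference solution $v \in \mathrm{sol}(\mathbb{A},0,u-\mathsf{g};\Omega_{2r}(\nu))$ and setting $\varphi = (\varsigma + |\nabla v|)^{p}$, $\psi = |\nabla u - \nabla v|^{p}$, the triple $(\mathcal{G},\varphi,\psi)$ satisfies the quasi-triangle inequality~\eqref{cond:phi-psi} with $\tilde c = 3^{p-1}$, Gehring-type higher integrability from~\cite[Theorem~10]{Mi3} gives $\varphi \in \mathcal{RH}^{\Theta/p}(\Omega_{r}(\nu))$ for some $\gamma := \Theta/p>1$, and Lemma~\ref{lem:u-v-PHI} delivers the requisite local estimate~\eqref{eq:LC}. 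Hence \ref{ing:A2_1} holds for the pair $(\mathcal{F},\mathcal{G})$. Under the alternative hypothesis~\eqref{hyp:R} plus the smallness $[\mathbb{A}]_{p_{0}}^{r_{0}} \le \delta_{0}$, one introduces the further comparison function $w \in \mathrm{sol}(\overline{\mathbb{A}}_{\Omega_{3r/2}(\nu)},0,v;\Omega_{3r/2}(\nu))$ and takes $\varphi = (\varsigma + |\nabla w|)^{p}$, $\psi = |\nabla u - \nabla w|^{p}$; estimates~\eqref{est:app-w-inf} and~\eqref{est:app-uw}, together with the Lipschitz bound of~\cite[Lemma~5]{Lieberman1988}, produce the $L^{\infty}$-control~\eqref{eq:Linf} and the local integral bound~\eqref{eq:LC}, i.e.\ ingredient~\ref{ing:A2_2}.

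The second and final step is to invoke Theorem~\ref{theo:L-O} with this pair $(\mathcal{F},\mathcal{G})$ and with the given Young function $\Phi \in \Delta_{2}$ (so that $p_{1}>1$ from Lemma~\ref{lem:Delta2_pro2} is at hand). In the setting of Theorem~\ref{theo:app-1} we use the first part of Theorem~\ref{theo:L-O}, which yields~\eqref{eq:app-3-LO} in the range $\alpha \in [0,n/\gamma)$, $0 < q < \frac{n\gamma}{p_{1}(n - \gamma \alpha)}$ and $0 < s \le \infty$, exactly the range claimed. In the setting of Theorem~\ref{theo:app-2} we use the second part of Theorem~\ref{theo:L-O} corresponding to \ref{ing:A2_2}, and this dispenses with the restrictions on $\alpha$ and $q$, giving~\eqref{eq:app-3-LO} for all $\alpha \in [0,n)$, $0 < q < \infty$ and $0 < s \le \infty$.

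I expect no genuine obstacle beyond bookkeeping: all analytic content has already been built in Lemma~\ref{lem:u-v-PHI}, the reverse-H\"older estimate of~\cite[Theorem~10]{Mi3}, the boundary comparison of~\cite{MP11} and the Lipschitz bound of~\cite[Lemma~5]{Lieberman1988}. The one point deserving attention is to confirm that the threshold $\delta_{0}$ and the constants appearing in the comparison estimates depend only on the structural data $(n,p,\Lambda,c_{0},r_{0})$ and on the $\Delta_{2}$ constant of $\Phi$ (through $p_{1}$), so that the smallness of $[\mathbb{A}]_{p_{0}}^{r_{0}}$ fixed once and for all in the Reifenberg setting remains compatible with the choice of $\varepsilon$ made at the end of the proof of Theorem~\ref{theo:L-O}. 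This is the same bookkeeping as in Theorem~\ref{theo:app-2} and requires no new idea; one simply propagates the dependencies through the applications of Lemmas~\ref{lem:A3} and~\ref{lem:A4}.
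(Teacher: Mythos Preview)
Your proposal is correct and follows essentially the same approach as the paper: recycle the verification of \ref{ing:A3} and of \ref{ing:A2_1} (resp.\ \ref{ing:A2_2}) from the proofs of Theorems~\ref{theo:app-1} and~\ref{theo:app-2} for the pair $\mathcal{F}=|\mathbf{F}|^p+|\nabla\mathsf{g}|^p$, $\mathcal{G}=(\varsigma+|\nabla u|)^p$, then apply Theorem~\ref{theo:L-O} with $\gamma=\Theta/p$. The paper's proof is in fact a terser version of exactly this argument, and your remark on tracking the dependence of $\delta_0$ is a sensible elaboration of what is implicit there.
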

\begin{proof}
As in the proof of two previous theorems, the functions $\mathcal{F}$ and $\mathcal{G}$ given as in~\eqref{est:app-FG} satisfy either local comparison~\ref{ing:A2_1} or~\ref{ing:A2_2} under hypotheses of Theorem~\ref{theo:app-1} or hypotheses of Theorem~\ref{theo:app-2} respectively. It allows us to apply Theorem~\ref{theo:L-O} to conclude~\eqref{eq:app-3-LO} in both cases. We remark that in the first case the constant $\gamma = \frac{\Theta}{p}$ which appears in the reverse H{\"o}lder inequality~\eqref{est:app-RH}.
\end{proof}

\subsection{Quasi-linear elliptic double obstacle problems}
\label{sec:obs_prob}
This section models with double obstacle problems, arising in many physical phenomena and other applications. The study of regularity theory of such problem (about the sharp integrability between gradient of solutions and of the obstacles) become a central and promising topic in recent years. There has been a great deal of works concerning this variational inequality problem. 

For one-sided obstacle problems, $C^{0,\alpha}$ and $C^{1,\alpha}$ regularity was done by Choe in \cite{Choe1991}, H\"older continuity by Eleuteri in \cite{Eleuteri2007}, Calder\'on-Zygmund type estimates in \cite{EH2008,BCW2012,BDM2011}, etc. Later, special attention has been driven to the obstacle problems with double constraints, so far a list of references can be found in \cite{MZ1986, MMV1989,BR2020} and many others.

In this section, the regularity results for the quasi-linear elliptic double obstacle problems will be considered as the next application of FMD method in our paper. 
\subsubsection{Problem setting}
Let us formulate the form of quasi-linear elliptic double obstacle problems as follows. 

Let $\Omega$ be an open bounded domain of $\mathbb{R}^n$, $p \in (1,\infty)$ and $\mathbf{F} \in L^p(\Omega;\mathbb{R}^n)$. Given $f_1$, $f_2 \in W^{1,p}(\Omega)$ are two fixed functions such that $f_1 \le f_2$ almost everywhere in $\Omega$ and $f_1 \le 0 \le f_2$ on $\partial \Omega$. We introduce the following convex admissible set related to $f_1$ and $f_2$ by
\begin{align}\label{def:S_0}
\mathcal{S}_0 = \left\{f \in W^{1,p}(\Omega): \ f_1 \le f \le f_2 \ \mbox{ a.e. in }\Omega\right\}.
\end{align}
The double obstacle problem is to find a weak solution $u \in \mathcal{S}_0$ satisfying the variational inequality 
\begin{align}\tag{$\mathbf{P_2}$}\label{eq:DOP}
\int_{\Omega} \langle \mathbb{A}(x,\nabla u), \nabla (u - \phi) \rangle dx \le \int_{\Omega} \langle \mathbb{B}(x, \mathbf{F}), \nabla (u - \phi) \rangle dx, 
\end{align}
for all $\phi \in \mathcal{S}_0$. Here, the quasi-linear operator $\mathbb{A}: \Omega \times \mathbb{R}^n \rightarrow \mathbb{R}$ is a vector valued function such that $\mathbb{A}(\cdot,\xi)$ is measurable on $\Omega$ for every $\xi$ in $\mathbb{R}^n$, $\mathbb{A}(x,\cdot)$ is differentiable on $\mathbb{R}^n$ for almost every $x$ in $\Omega$ and satisfying the following conditions: there exist constants $p \in (1,\infty)$, $\Lambda>0$ and $\varsigma \in [0,1]$ such that 
\begin{align}\label{eq:A1-DOP}
& \left| \mathbb{A}(x,\xi) \right|  \le \Lambda \left(\varsigma^2 + |\xi|^2 \right)^{\frac{p-1}{2}}, \\
\label{eq:A2-DOP}
& \langle \mathbb{A}(x,\xi_1)-\mathbb{A}(x,\xi_2), \xi_1 - \xi_2 \rangle \ge \Lambda^{-1} \Psi_{\varsigma}(\xi_1, \xi_2),
\end{align}
for almost every $x$ in $\Omega$ and every $\xi$, $\xi_1$, $\xi_2 \in \mathbb{R}^n \setminus \{0\}$, where the function $\Psi_{\varsigma}$ is defined by~\eqref{def:Phi}. The nonlinear operator $\mathbb{B}: \Omega \times \mathbb{R}^n \rightarrow \mathbb{R}$ is a vector valued function such that
\begin{align}\label{eq:B-DOP}
& \left| \mathbb{B}(x,\xi) \right| \le \Lambda (\varsigma^2 + |\xi|^2)^{\frac{p-1}{2}}.
\end{align}

We refer to~\cite{BR2020} for the existence and uniqueness of weak solution $u \in \mathcal{S}_0$ to the double obstacle problem under the monotone and coercive assumptions of operator $\mathbb{A}$, with the following estimate
\begin{align*}
\|\nabla u\|_{L^p(\Omega)} \le C \left(\|\mathbf{F}\|_{L^p(\Omega)} + \|\nabla f_1\|_{L^p(\Omega)} + \|\nabla f_2\|_{L^p(\Omega)}\right).
\end{align*}

In this paper, as another application of FMD theory, we extends regularity  results for double obstacle problem~\eqref{eq:DOP} in Lorentz and Orlicz spaces.
 
\subsubsection{Comparison estimate}
For simplicity of  notation, we denote by
$u \in \mathrm{sol}^{\mathbf{DOP}}\left(\mathbb{A},\mathbb{B}(\cdot,\mathbf{F}),\mathcal{S}_0;\Omega\right)
$ to say that the function $u \in \mathcal{S}_0$ is a weak solution to problem~\eqref{eq:DOP}. Let us  recall the weak maximum principle already stated in~\cite[Lemma 3.5]{BCW2012}.
\begin{lemma}\label{lem:weak-max}
Assume that $w_1, w_2 \in W^{1,p}(\Omega)$ for some $p>1$ satisfying $(w_1 - w_2)^+ \in W_0^{1,p}(\Omega)$, $w_1 \le w_2$ on $\partial \Omega$ and the following variational formula
\begin{align*}
\int_{\Omega} \left\langle\mathbb{A}(x,\nabla w_1), \nabla \phi\right\rangle dx \le \int_{\Omega} \left\langle\mathbb{A}(x,\nabla w_2), \nabla \phi\right\rangle dx,  
\end{align*}
holds for all non-negative $\phi \in W_0^{1,p}(\Omega)$. Then $w_1 \le w_2$ almost everywhere in $\Omega$.
\end{lemma}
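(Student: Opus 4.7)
The plan is to prove this weak maximum principle by a standard testing argument, exploiting the strict monotonicity of $\mathbb{A}$ in the gradient variable provided by condition \eqref{eq:A2-DOP}. First I would take $\phi = (w_1 - w_2)^+$ as a test function in the given variational inequality. This choice is admissible since by hypothesis $(w_1 - w_2)^+ \in W_0^{1,p}(\Omega)$ and it is manifestly non-negative, so it fits the class of test functions allowed in the statement.

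Substituting this test function and moving everything to one side, I would rewrite the resulting inequality as
\begin{align*}
\int_{\Omega} \left\langle \mathbb{A}(x,\nabla w_1) - \mathbb{A}(x,\nabla w_2), \nabla (w_1 - w_2)^+ \right\rangle dx \le 0.
\end{align*}
On the set $\mathcal{E} = \{x \in \Omega : w_1(x) > w_2(x)\}$ one has $\nabla (w_1 - w_2)^+ = \nabla w_1 - \nabla w_2$, while the gradient vanishes almost everywhere on $\Omega \setminus \mathcal{E}$. Applying the monotonicity assumption \eqref{eq:A2-DOP} with $\xi_1 = \nabla w_1$ and $\xi_2 = \nabla w_2$ pointwise on $\mathcal{E}$ then yields
\begin{align*}
\Lambda^{-1} \int_{\mathcal{E}} \Psi_{\varsigma}(\nabla w_1, \nabla w_2)\, dx \le 0.
\end{align*}
Since $\Psi_{\varsigma} \ge 0$ by its definition \eqref{def:Phi}, this forces $\Psi_{\varsigma}(\nabla w_1, \nabla w_2) = 0$ a.e. on $\mathcal{E}$, which in turn forces $\nabla w_1 = \nabla w_2$ a.e. on $\mathcal{E}$ (whether $\varsigma > 0$ or $\varsigma = 0$, since in the latter case the identity $|\nabla w_1|^{p-2} + |\nabla w_2|^{p-2} = 0$ on the relevant set is handled by splitting the integrand into the regions where either gradient vanishes).

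Consequently $\nabla (w_1 - w_2)^+ = 0$ almost everywhere in $\Omega$, so the nonnegative function $(w_1 - w_2)^+$ is constant on each connected component of $\Omega$. Since $(w_1 - w_2)^+ \in W_0^{1,p}(\Omega)$, this constant must equal zero, and we conclude $w_1 \le w_2$ a.e. in $\Omega$. The only delicate point in the argument is handling the case $1 < p < 2$ with $\varsigma = 0$, where $\Psi_0$ degenerates; here one needs to note that the set where both gradients vanish contributes no measure to $\nabla(w_1 - w_2)^+$, so the conclusion $\nabla(w_1-w_2)^+ = 0$ a.e. still follows. I expect this to be the only non-routine step; everything else is the standard recipe for weak comparison principles in the $p$-Laplace-type setting.
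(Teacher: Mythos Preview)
Your argument is correct and is exactly the standard testing argument one expects here. Note, however, that the paper does not actually give its own proof of this lemma: it is stated as a recollection of \cite[Lemma~3.5]{BCW2012} and used as a black box in the proof of Lemma~\ref{lem:u-v-DOP}. Your proof is precisely the argument that reference contains, so there is nothing to compare against in the present paper.

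One minor remark on your handling of the degenerate case $\varsigma=0$: the splitting you describe is not really needed. If $\Psi_0(\nabla w_1,\nabla w_2)=0$ at a point, then either both gradients vanish (hence are equal) or at least one is nonzero, in which case the prefactor $(|\nabla w_1|^2+|\nabla w_2|^2)^{(p-2)/2}$ is a positive finite number and $|\nabla w_1-\nabla w_2|^2=0$ follows directly. Either way $\nabla w_1=\nabla w_2$ on $\mathcal{E}$, with no genuine indeterminacy to resolve.
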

\begin{lemma}\label{lem:u-v-DOP}
Let $B$ be an open subset of $\Omega$, the operators $\mathbb{A}$, $\mathbb{B}$ satisfy assumptions in~\eqref{eq:A1-DOP}-\eqref{eq:A2-DOP} and~\eqref{eq:B-DOP} for $p>1$. Given functions $\mathbf{F} \in L^p(\Omega)$, $f_1$, $f_2 \in W^{1,p}(\Omega)$ such that $f_1 \le f_2$ almost everywhere in $\Omega$ and $f_1 \le 0 \le f_2$ on $\partial \Omega$. Assume that 
\begin{align*}
u \in \mathrm{sol}^{\mathbf{DOP}}\left(\mathbb{A},\mathbb{B}(\cdot,\mathbf{F}),\mathcal{S}_0;\Omega\right) \ \mbox{ and } \ v \in \mathrm{sol}\left(\mathbb{A},0,u;B\right),
\end{align*}
with $\mathcal{S}_0$ defined as in~\eqref{def:S_0}. Then for every $\varepsilon \in (0,1)$, one can find $C = C(p,\varepsilon)>0$ such that
\begin{align}\label{est:u-v-DOP}
\fint_{B} |\nabla u - \nabla v|^p dx \le \varepsilon  \fint_{B} \left(\varsigma^p + |\nabla u|^p\right) dx + C \fint_{B} \left(\varsigma^p + |\mathbf{F}|^{p} + |\nabla f_1|^p + |\nabla f_2|^p\right) dx.
\end{align}
\end{lemma}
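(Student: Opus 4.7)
The plan is to imitate the proof of Lemma~\ref{lem:u-v-PHI}, but since $u$ only satisfies a variational inequality, I must supply a competitor $\phi \in \mathcal{S}_0$ that agrees with $u$ outside $B$ and mimics $v$ inside $B$. The natural choice is the truncation $\tilde v := \max\{f_1, \min\{v, f_2\}\}$ on $B$, extended by $u$ on $\Omega \setminus B$. Since $v = u$ on $\partial B$ and $f_1 \le u \le f_2$ on $B$, the two definitions agree on $\partial B$, so $\tilde v \in W^{1,p}(\Omega)$ and $u - \tilde v \in W^{1,p}_0(B)$; the constraint $f_1 \le \tilde v \le f_2$ holds by construction in $B$ and on $\Omega \setminus B$ follows from $u \in \mathcal{S}_0$, hence $\tilde v \in \mathcal{S}_0$.

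Inserting $\phi = \tilde v$ into~\eqref{eq:DOP} and using $u - \tilde v$ as test function in the equation for $v$, then subtracting and noting that $u - \tilde v$ is supported in $B$, I obtain
\begin{align*}
\int_{B} \langle \mathbb{A}(x,\nabla u) - \mathbb{A}(x,\nabla v),\, \nabla u - \nabla \tilde v \rangle \, dx \le \int_{B} \langle \mathbb{B}(x,\mathbf{F}),\, \nabla u - \nabla \tilde v \rangle \, dx.
\end{align*}
Splitting $B$ into $B_1 = \{v < f_1\}$, $B_2 = \{v > f_2\}$ and $B_0 = B \setminus (B_1 \cup B_2)$, so that $\nabla v - \nabla \tilde v$ vanishes on $B_0$ and equals $\nabla v - \nabla f_i$ on $B_i$, writing $\nabla u - \nabla \tilde v = (\nabla u - \nabla v) + (\nabla v - \nabla \tilde v)$, and applying the monotonicity~\eqref{eq:A2-DOP} to the principal term yield
\begin{align*}
\Lambda^{-1} \int_{B} \Psi_{\varsigma}(\nabla u,\nabla v)\, dx \le \int_{B} \langle \mathbb{B}(x,\mathbf{F}),\, \nabla u - \nabla \tilde v \rangle \, dx + \sum_{i=1}^{2}\int_{B_i} \langle \mathbb{A}(x,\nabla u) - \mathbb{A}(x,\nabla v),\, \nabla f_i - \nabla v \rangle \, dx.
\end{align*}

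Next, I will estimate the right-hand side. The growth bound~\eqref{eq:A1-DOP} dominates $|\mathbb{A}(x,\nabla u)| + |\mathbb{A}(x,\nabla v)|$ by $C(\varsigma^{p-1} + |\nabla u|^{p-1} + |\nabla v|^{p-1})$, while the triangle inequality gives $|\nabla v|^{p-1} \le C(|\nabla u|^{p-1} + |\nabla u - \nabla v|^{p-1})$; the bound on $\mathbb{B}$ follows from~\eqref{eq:B-DOP}. Repeated applications of H{\"o}lder and Young inequalities with small parameters, together with $|\nabla u - \nabla \tilde v| \le |\nabla u - \nabla v| + |\nabla v - \nabla f_i|$ on $B_i$, will yield an estimate of the form
\begin{align*}
\fint_{B} \Psi_{\varsigma}(\nabla u,\nabla v)\, dx \le \varepsilon_1 \fint_{B} |\nabla u - \nabla v|^p \, dx + \varepsilon_2 \fint_{B} (\varsigma^p + |\nabla u|^p) \, dx + C \fint_{B} (\varsigma^p + |\mathbf{F}|^p + |\nabla f_1|^p + |\nabla f_2|^p) \, dx.
\end{align*}
Finally, invoking Lemma~\ref{lem:Phi(v,v)} with $\phi_1 = \nabla u$, $\phi_2 = \nabla v$ converts $|\nabla u - \nabla v|^p$ into a fraction of $\Psi_{\varsigma}(\nabla u,\nabla v)$ plus a small multiple of $\varsigma^p + |\nabla u|^p$; choosing $\varepsilon_1,\varepsilon_2$ sufficiently small and absorbing delivers~\eqref{est:u-v-DOP}.

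The hard part I anticipate is controlling the coincidence-set integrals on $B_1$ and $B_2$: the factor $|\nabla v|^{p-1}$ appearing there cannot be absorbed directly into the left-hand side, since the monotone quantity $\Psi_{\varsigma}(\nabla u,\nabla v)$ alone does not dominate $|\nabla v|^p$. The rescue is to express $|\nabla v|^{p-1}$ via $|\nabla u|^{p-1} + |\nabla u - \nabla v|^{p-1}$ and then perform a Young split so that the $|\nabla u - \nabla v|^p$ contribution carries an arbitrarily small coefficient; this survives passage through Lemma~\ref{lem:Phi(v,v)} and remains absorbable, while the $|\nabla f_i|^p$ terms land cleanly on the data side.
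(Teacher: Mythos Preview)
Your truncation competitor $\tilde v$ is a natural idea and the opening identity is fine, but the ``rescue'' you describe for the coincidence-set integrals does not close. On $B_i$ you bound
\[
\bigl|\langle \mathbb{A}(x,\nabla u)-\mathbb{A}(x,\nabla v),\,\nabla f_i-\nabla v\rangle\bigr|
\le C\bigl(\varsigma^{p-1}+|\nabla u|^{p-1}+|\nabla u-\nabla v|^{p-1}\bigr)\,|\nabla f_i-\nabla v|,
\]
and then propose to write $|\nabla f_i-\nabla v|\le |\nabla f_i|+|\nabla u|+|\nabla u-\nabla v|$. The product contains the term $|\nabla u|^{p-1}\cdot|\nabla u|=|\nabla u|^p$ with a \emph{structural} constant in front, not a parameter you can send to zero. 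No Young split rearranges this: the factor $|\nabla f_i-\nabla v|$ genuinely carries a copy of $|\nabla v|$ (hence of $|\nabla u|$) that is not paired with any smallness. Consequently the intermediate estimate you write,
\[
\fint_{B}\Psi_\varsigma(\nabla u,\nabla v)\,dx \le \varepsilon_1 \fint_B |\nabla u-\nabla v|^p + \varepsilon_2 \fint_B(\varsigma^p+|\nabla u|^p) + C\fint_B(\text{data}),
\]
does not follow from your argument with $\varepsilon_2$ arbitrarily small.

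Your scheme can be repaired, but it needs one more use of the equation for $v$. Since $v=u$ on $\partial B$ with $f_1\le u\le f_2$, the functions $(f_1-v)^+$ and $(v-f_2)^+$ lie in $W^{1,p}_0(B)$; testing $\mathrm{div}\,\mathbb{A}(x,\nabla v)=0$ with them gives $\int_{B_i}\langle\mathbb{A}(x,\nabla v),\nabla f_i-\nabla v\rangle=0$, which both kills the $\mathbb{A}(x,\nabla v)$ contribution and, via monotonicity against $\mathbb{A}(x,\nabla f_i)$ together with Lemma~\ref{lem:Phi(v,v)}, yields $\int_{B_i}|\nabla v-\nabla f_i|^p\le C\int_{B_i}(\varsigma^p+|\nabla f_i|^p)$. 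With this in hand the remaining term $\int_{B_i}\langle\mathbb{A}(x,\nabla u),\nabla f_i-\nabla v\rangle$ is handled by Young with the small weight on $(\varsigma^p+|\nabla u|^p)$ and the large weight on $|\nabla f_i-\nabla v|^p$, which is now pure data.

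For comparison, the paper does not truncate $v$ at all. It builds a chain $u\to\omega_1\to\omega_2\to v$, where $\omega_1$ solves a one-sided obstacle problem in $B$ with right-hand side $\mathbb{A}(\cdot,\nabla f_2)$ (shown to satisfy $\omega_1\le f_2$, hence extendable to a competitor in $\mathcal{S}_0$), and $\omega_2\in\mathrm{sol}(\mathbb{A},\mathbb{A}(\cdot,\nabla f_1),\omega_1;B)$ (shown $\ge f_1$ by the weak maximum principle). Each step is a clean two-function comparison producing small-$\varepsilon$ control plus obstacle data, and the triangle inequality assembles them. Your route, once patched as above, is shorter and avoids the auxiliary obstacle problems, at the cost of the extra test-function trick on the coincidence sets; the paper's chain is more modular and mirrors the standard one-obstacle literature.
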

\begin{proof}
We first introduce the following set
\begin{align*}
\mathcal{R} = \left\{\omega \in u + W_0^{1,p}(B): \ \omega \ge f_1 \ \mbox{ a.e. in } \ B\right\}.
\end{align*}
Let ${\omega}_1 \in \mathcal{R}$ be the unique solution to the variational inequality
\begin{align}\label{est:DOP-1}
\int_{B} \langle \mathbb{A}(x,\nabla {\omega}_1), \nabla {\omega}_1 - \nabla \phi \rangle dx \le \int_{B} \langle \mathbb{A}(x,\nabla f_2), \nabla {\omega}_1 - \nabla \phi \rangle dx,
\end{align}
for all test function $\phi \in \mathcal{R}$. By taking $u$ as the test function in~\eqref{est:DOP-1}, one has
\begin{align*}
\int_{B} \langle \mathbb{A}(x,\nabla {\omega}_1), \nabla {\omega}_1 \rangle dx \le \int_{B} \langle \mathbb{A}(x,\nabla {\omega}_1), \nabla u \rangle dx + \int_{B} \langle \mathbb{A}(x,\nabla f_2), \nabla {\omega}_1 - \nabla u \rangle dx,
\end{align*}
which with~\eqref{eq:A1-DOP} and~\eqref{eq:A2-DOP} implies to
\begin{align*}
\Lambda^{-1} & \int_{B} \left(\varsigma^2 + |\nabla {\omega}_1|^2\right)^{\frac{p-2}{2}}|\nabla {\omega}_1|^2 dx  \le \Lambda \left(\int_{B} (\varsigma^2 + |\nabla {\omega}_1|^2)^{\frac{p-1}{2}} |\nabla u| dx  \right.  \\   
& \hspace{3cm} \left. + \int_{B} (\varsigma^2 +|\nabla f_2|^2)^{\frac{p-1}{2}} |\nabla {\omega}_1| dx + \int_{B} (\varsigma^2 +|\nabla f_2|^2)^{\frac{p-1}{2}} |\nabla u| dx  \right).
\end{align*}
Proceeding as for the proof of Lemma~\ref{lem:u-v-PHI} with this inequality, it is not difficult to show that
\begin{align}\label{est:DOP-2}
\int_{B} |\nabla {\omega}_1|^p dx \le C \int_{B} \left(\varsigma^p + |\nabla f_2|^p + |\nabla u|^p \right) dx.
\end{align}
We now consider ${\omega}_1 - ({\omega}_1-f_2)^{+}$ as the test function in~\eqref{est:DOP-1}, there holds
\begin{align*}
\int_{B} \left \langle \mathbb{A}(x,\nabla {\omega}_1) - \mathbb{A}(x,\nabla f_2), \nabla \left(({\omega}_1 - f_2)^{+}\right) \right \rangle dx \le 0,
\end{align*}
which with~\eqref{eq:A2-DOP} implies to
\begin{align}\label{est:DOP-3}
\int_{B\cap \mathcal{E}} \Psi_{\varsigma}(\nabla {\omega}_1, \nabla f_2) dx \le 0, \quad \mbox{ where } \ \mathcal{E} = \{{\omega}_1 \ge f_2\}.
\end{align}
Moreover, it is known that for every $\varepsilon>0$, one has
\begin{align}\nonumber
\int_{B\cap \mathcal{E}} |\nabla({\omega}_1- f_2)|^p dx & \le \varepsilon    \int_{B\cap \mathcal{E}} \left(\varsigma^p + |\nabla {\omega}_1|^p + |\nabla f_2|^p \right) dx \\ \nonumber
& \hspace{3cm} + c_{\varepsilon} \int_{B\cap \mathcal{E}} \Psi_{\varsigma}(\nabla {\omega}_1, \nabla f_2) dx \\ \nonumber
& \le \varepsilon \int_{B\cap \mathcal{E}} \left(\varsigma^p + |\nabla {\omega}_1|^p + |\nabla f_2|^p \right) dx,
\end{align}
where the last inequality is valid from~\eqref{est:DOP-3}. This inequality can be rewritten as
\begin{align}\label{est:DOP-4}
\int_{B} |\nabla(({\omega}_1- f_2)^+)|^p dx  & \le \varepsilon \int_{B\cap \mathcal{E}} \left(\varsigma^p + |\nabla {\omega}_1|^p + |\nabla f_2|^p \right) dx,
\end{align}
Passing $\varepsilon$ to $0$ in~\eqref{est:DOP-4}, one concludes that ${\omega}_1 \le f_2$ almost everywhere in $B$. Hence ${\omega}_1 - u \in W_0^{1,p}(B)$ and $f_1 \le {\omega}_1 \le f_2$ almost everywhere in $B$. For this reason, we may extend ${\omega}_1$ to $\Omega \setminus B$ by $u$ so that ${\omega}_1 \in \mathcal{S}_0$ and ${\omega}_1-u = 0$ in $\Omega \setminus B$. Adding two inequality corresponding to the ones by taking ${\omega}_1$ and $u$ as test functions of problems~\eqref{eq:DOP} and~\eqref{est:DOP-1} respectively, one has
\begin{align}\label{est:DOP-5}
\int_{B} \left\langle \mathbb{A}(x,\nabla u) - \mathbb{A}(x,\nabla {\omega}_1), \nabla u - \nabla {\omega}_1 \right\rangle dx \le \int_{B} \left\langle \mathbb{B}(x,\mathbf{F}) - \mathbb{A}(x,\nabla f_2), \nabla u - \nabla {\omega}_1 \right\rangle dx.
\end{align}
Combining~\eqref{est:DOP-5} with assumptions~\eqref{eq:A1-DOP}, \eqref{eq:A2-DOP} and~\eqref{eq:B-DOP} on nonlinear operators $\mathbb{A}$, $\mathbb{B}$, it follows that
\begin{align}\nonumber
 \int_{B} \Psi_{\varsigma}(\nabla u,\nabla {\omega}_1) dx & \le \Lambda^2 \left( \int_{B} \left(\varsigma^2+|\mathbf{F}|^2\right)^{\frac{p-1}{2}} |\nabla u - \nabla {\omega}_1| dx \right. \\ \label{est:DOP-6}
  & \hspace{2cm} \left. + \int_{B} \left(\varsigma^2 + |\nabla f_2|^2\right)^{\frac{p-1}{2}} |\nabla u - \nabla {\omega}_1| dx\right).
\end{align}
Applying H{\"o}lder's and Young's inequalities for every $\varepsilon_1 >0$ for two terms on the right hand side of~\eqref{est:DOP-6}, we gets that
\begin{align}\label{est:DOP-7}
\int_{B} \Psi_{\varsigma}(\nabla u,\nabla {\omega}_1) dx & \le \varepsilon_1 \int_{B} |\nabla u - \nabla {\omega}_1|^p dx + C(p,\varepsilon_1) \int_{B} \left(\varsigma^{p} + |\mathbf{F}|^p + |\nabla f_2|^p\right) dx.
\end{align}
Thanks to Lemma~\ref{lem:Phi(v,v)}, for every $\varepsilon \in (0,1)$, there holds
\begin{align}\nonumber
\int_{B} |\nabla u - \nabla {\omega}_1|^p dx &\le \varepsilon  \int_{B} \left(\varsigma^p + |\nabla u|^p\right) dx + \max\left\{1, 8\varepsilon^{1 - \frac{2}{p}}\right\} \int_{B} \Psi_{\varsigma}(\nabla u, \nabla {\omega}_1) dx \\ \label{est:DOP-8}
& \le \varepsilon  \int_{B} \left(\varsigma^p + |\nabla u|^p\right) dx + C(p,\varepsilon) \int_{B} \left(\varsigma^{p} + |\mathbf{F}|^p + |\nabla f_2|^p\right) dx,
\end{align}
where the second estimate is an application of~\eqref{est:DOP-7} with suitable value of $\varepsilon_1$. Let ${\omega}_2 \in \mathrm{sol}(\mathbb{A},\mathbb{A}(\cdot,\nabla f_1),{\omega}_1;B)$, since ${\omega}_2 = {\omega}_1 \ge f_1$ almost everywhere on $\partial B$ so it deduces from Lemma~\ref{lem:weak-max} that ${\omega}_2 \ge f_1$ almost everywhere in $B$. Therefore we may take ${\omega}_2$ as the test function in~\eqref{est:DOP-1} to find that
\begin{align*}
\int_{B} \left\langle \mathbb{A}(x,\nabla {\omega}_1), \nabla {\omega}_1 -  \nabla {\omega}_2 \right \rangle dx \le \int_{B} \left \langle \mathbb{A}(x,\nabla f_2), \nabla {\omega}_1 -  \nabla {\omega}_2 \right \rangle dx,
\end{align*}
which with choosing ${\omega}_1-{\omega}_2$ as the test function in variational formula of equation solving ${\omega}_2$, to observe that
\begin{align}\label{est:DOP-9}
\int_{B} \left\langle \mathbb{A}(x,\nabla {\omega}_1) - \mathbb{A}(x,\nabla {\omega}_2), \nabla {\omega}_1 - \nabla {\omega}_2\right\rangle dx = \int_{B}  \left\langle \mathbb{A}(x,\nabla f_2) - \mathbb{A}(x,\nabla f_1), \nabla {\omega}_1 - \nabla {\omega}_2\right\rangle dx.
\end{align}
The proof is essentially the same as the previous one in~\eqref{est:DOP-8}, from~\eqref{est:DOP-9} once again we may show that
\begin{align}\label{est:DOP-10}
\int_{B} |\nabla {\omega}_1 - \nabla {\omega}_2|^p dx \le \varepsilon  \int_{B} \left(\varsigma^p + |\nabla {\omega}_1|^p\right) dx + C(p,\varepsilon) \int_{B} \left(\varsigma^{p} + |\nabla f_1|^p + |\nabla f_2|^p\right) dx,
\end{align}
for every $\varepsilon \in (0,1)$. Let $v \in \mathrm{sol}(\mathbb{A},0,u;B)$ or $v \in \mathrm{sol}(\mathbb{A},0,{\omega}_2;B)$ with notice that ${\omega}_2 = {\omega}_1 = u$ on $\partial B$. The same proof remains valid to obtain the following estimate
\begin{align}\label{est:DOP-11}
\int_{B} |\nabla {\omega}_2 - \nabla v|^p dx \le \varepsilon  \int_{B} \left(\varsigma^p + |\nabla v|^p\right) dx + C(p,\varepsilon) \int_{B} \left(\varsigma^{p} + |\nabla f_1|^p\right) dx,
\end{align}
for all $\varepsilon \in (0,1)$. Collecting the estimates in~\eqref{est:DOP-2}, \eqref{est:DOP-8}, \eqref{est:DOP-10} and~\eqref{est:DOP-11} to discover that
\begin{align*}
\int_{B} |\nabla u - \nabla v|^p dx &\le 3^{p-1}\varepsilon \int_{B} \left(\varsigma^p + |\nabla u|^p + |\nabla v|^p\right)dx \\
& \hspace{2cm} + C(p,\varepsilon) \int_{B} \left(\varsigma^{p} + |\mathbf{F}|^p + |\nabla f_1|^p + |\nabla f_2|^p\right) dx,
\end{align*}
which guarantees~\eqref{eq:lem-Phi(u,v)}, by taking into account the fact that 
$$|\nabla v|^p \le 2^{p-1}(|\nabla u| + |\nabla u - \nabla v|^p),$$ 
and then changing a suitable value of $\varepsilon>0$.
\end{proof}
\subsubsection{Global regularity results}
\begin{theorem}[Global Lorentz estimates under assumption \eqref{hyp:P}]
\label{theo:DOP-1}
Let $\Omega \subset \mathbb{R}^n$ be an open bounded domain satisfying~\eqref{hyp:P} with two constants $c_0, r_0>0$. Assume that operator $\mathbb{A}$ satisfies~\eqref{eq:A1-DOP}-\eqref{eq:A2-DOP} and $\mathbb{B}$ satisfies condition~\eqref{eq:B-DOP}. Suppose that 
\begin{align*}
u \in \mathrm{sol}^{\mathbf{DOP}}\left(\mathbb{A},\mathbb{B}(\cdot,\mathbf{F}),\mathcal{S}_0;\Omega\right),
\end{align*}
with given data $\mathbf{F}$, $f_1$, $f_2$ and $\mathcal{S}_0$ as in Lemma~\ref{lem:u-v-DOP}. Then there exists $\gamma>1$ such that  for $0 \le \alpha < \frac{n}{\gamma}$, $0<q<\frac{n \gamma}{n - \alpha \gamma}$ and $0 < s \le \infty$, there holds
\begin{align}\label{est:theo-DOP-1}
\|\mathbf{M}_{\alpha}((\varsigma +|\nabla u|)^p)\|_{L^{q,s}(\Omega)} \le C\|\mathbf{M}_{\alpha}(\varsigma^p + |\mathbf{F}|^p + |\nabla f_1|^p + |\nabla f_2|^p)\|_{L^{q,s}(\Omega)}.
\end{align}
\end{theorem}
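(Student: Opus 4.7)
The plan is to apply the Lorentz norm estimate of Theorem~\ref{theo:norm-L-1} to the pair
$$\mathcal{F} = \varsigma^p + |\mathbf{F}|^p + |\nabla f_1|^p + |\nabla f_2|^p, \qquad \mathcal{G} = (\varsigma + |\nabla u|)^p,$$
following exactly the template of Theorem~\ref{theo:app-1}, but substituting the double-obstacle comparison Lemma~\ref{lem:u-v-DOP} for Lemma~\ref{lem:u-v-PHI}. The task reduces to verifying the two hypotheses~\ref{ing:A2_1} and~\ref{ing:A3} for this choice.

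For the global ingredient~\ref{ing:A3}, I would take $B = \Omega$ in Lemma~\ref{lem:u-v-DOP}: using the implicit zero boundary trace of $u$ (compatible with $\mathcal{S}_0$ since $f_1 \le 0 \le f_2$ on $\partial\Omega$), the reference problem $v \in \mathrm{sol}(\mathbb{A}, 0, u; \Omega)$ has unique solution $v \equiv 0$, so the lemma reduces to
$$\fint_\Omega |\nabla u|^p\,dx \le \varepsilon \fint_\Omega (\varsigma^p + |\nabla u|^p)\,dx + C(\varepsilon)\fint_\Omega \mathcal{F}\,dx,$$
and absorbing the first right-hand term for $\varepsilon$ small yields $\fint_\Omega \mathcal{G}\,dx \le C\fint_\Omega \mathcal{F}\,dx$.

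For the local ingredient~\ref{ing:A2_1}, fix $\nu \in \overline{\Omega}$, $r \in (0, r_0/2]$, and let $v \in \mathrm{sol}(\mathbb{A}, 0, u; \Omega_{2r}(\nu))$ be the reference solution on the (surface) ball. The natural candidates are
$$\varphi = (\varsigma + |\nabla v|)^p, \qquad \psi = |\nabla u - \nabla v|^p.$$
The inclusion $(\mathcal{G}, \varphi, \psi) \in \mathrm{Q}(B_{2r}(\nu))$ with $\tilde{c} = 3^{p-1}$ is a direct consequence of $(a+b)^p \le 2^{p-1}(a^p + b^p)$ applied to $\varsigma + |\nabla u| \le (\varsigma + |\nabla v|) + |\nabla u - \nabla v|$ and its two cyclic permutations. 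The inequality~\eqref{eq:LC} for $\psi$ follows at once from Lemma~\ref{lem:u-v-DOP} with $B = \Omega_{2r}(\nu)$, after bounding the average over $B_r(\nu)$ by $2^n$ times the average over $B_{2r}(\nu)$.

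The main obstacle, and the only nontrivial step, is proving $\varphi \in \mathcal{RH}^{\gamma}(\Omega_r(\nu))$ for some $\gamma > 1$ depending only on $n, p, \Lambda$. This is a Gehring-type self-improving integrability for weak solutions of the homogeneous equation $\mathrm{div}(\mathbb{A}(x, \nabla v)) = 0$ in $\Omega_{2r}(\nu)$ with trace $u$ on $\partial\Omega_{2r}(\nu)$. Interior balls are handled by the classical Gehring lemma; for boundary balls, the $p$-capacity thickness condition~\eqref{hyp:P} provides the Caccioppoli-type estimate up to $\partial\Omega$, and the same result from~\cite[Theorem~10]{Mi3} invoked in the proof of Theorem~\ref{theo:app-1} delivers some $\Theta > p$ with
$$\left(\fint_{\Omega_r(\nu)}(\varsigma + |\nabla v|)^{\Theta}\,dx\right)^{1/\Theta} \le C\left(\fint_{\Omega_{2r}(\nu)}(\varsigma + |\nabla v|)^p\,dx\right)^{1/p},$$
which translates to $\varphi \in \mathcal{RH}^{\Theta/p}$. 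With both ingredients verified, Theorem~\ref{theo:norm-L-1} applied with $\gamma = \Theta/p$ yields~\eqref{est:theo-DOP-1} over the stated ranges of $\alpha$, $q$, $s$.
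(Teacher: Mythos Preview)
Your proposal is correct and follows essentially the same route as the paper: the same choice of $\mathcal{F}$, $\mathcal{G}$, $\varphi$, $\psi$, the same appeal to Lemma~\ref{lem:u-v-DOP} for the local comparison~\ref{ing:A2_1}, the same citation of \cite[Theorem~10]{Mi3} for the reverse H\"older class, and the same concluding application of Theorem~\ref{theo:norm-L-1}. You even supply more detail than the paper does (the explicit verification of~\ref{ing:A3} via $B=\Omega$ and $v\equiv 0$, and the constant $\tilde{c}=3^{p-1}$), but the argument is the paper's own.
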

\begin{proof}
Let us consider two functions given by
\begin{align*}
\mathcal{F} = \varsigma^p + |\mathbf{F}|^p + |\nabla f_1|^p + |\nabla f_2|^p \ \mbox{ and }\ \mathcal{G} = (\varsigma +|\nabla u|)^p.
\end{align*}
For every $0< r \le r_0/2$ and $\nu \in \overline{\Omega}$, we now consider
\begin{align*}
v \in \mathrm{sol}(\mathbb{A},0,u;\Omega_{2r}(\nu)),
\end{align*}
and two functions as follows
\begin{align*}
\varphi = (\varsigma + |\nabla v|)^p, \qquad \psi = |\nabla u - \nabla v|^p. 
\end{align*}
A simple computation shows that $(\mathcal{G},\varphi,\psi) \in Q(\Omega_{2r}(\nu))$ and~\cite[Theorem 10]{Mi3} ensures that the existence of $\gamma = \gamma(n,p,\sigma,\Lambda)>1$ such that $\varphi \in \mathcal{RH}^{\gamma}(\Omega_{r}(\nu))$. Thanks to Lemma~\ref{lem:u-v-DOP}, we conclude that $\mathcal{F}$, $\mathcal{G}$ satisfy the local comparison~\ref{ing:A2_1}. Therefore, the proof of~\eqref{est:theo-DOP-1} is straightforward by applying Theorem~\ref{theo:norm-L-1}.
\end{proof}
\begin{remark}
Although not indispensable, the global Lorentz and Orlicz-Lorentz results can also be achieved under assumption~\eqref{hyp:R} and similar to Theorem~\ref{theo:app-2}.
\end{remark}

\begin{remark}
It is possible to improve these results under various additional hypotheses on the nonlinear operator $\mathbb{A}$ and the boundary of the domain $\Omega$. On the other hand, with the best understanding of this approach, we expect that the validity of elliptic problems will be carried on also to the parabolic ones.
\end{remark}

\end{document}